\documentclass[notitlepage,11pt, reqno]{amsart}
\usepackage{amssymb,amsmath,amscd,amsthm,color,tocvsec2}
\usepackage[all, cmtip]{xy}
\usepackage{enumerate,pgf,xcolor}
%\usetikzlibrary{arrows,shapes.geometric}
\usepackage[mathscr]{eucal}
\usepackage[letterpaper]{geometry}
\usepackage{sseq}
\usepackage{hyperref}

\usepackage{lmodern}
\usepackage[T1]{fontenc}
\usepackage[utf8]{inputenc}

\newcommand{\cosoc}{\operatorname{cosoc}}

\newcommand{\A}{\mathscr{P}}
\newcommand{\W}{\mathscr{W}}

\newcommand{\Hom}{\operatorname{Hom}}
\newcommand{\Ext}{\operatorname{Ext}}

\newcommand{\fsl}{\mathfrak{sl}}

\newcommand{\commen}[1]{}
\newcommand{\End}{\operatorname{End}}

\newcommand{\g}{\mathfrak{g}}

\newcommand{\fb}{\mathfrak{b}}

\newcommand{\p}{\partial}

\newcommand{\Vect}{\operatorname{Vect}}

\newcommand{\RHom}{\operatorname{RHom}}
\newcommand{\id}{\operatorname{id}}

\newcommand{\C}{\ensuremath{\mathbb{C}}}

\newcommand{\Z}{\ensuremath{\mathbb{Z}}}

\newcommand{\comment}[1]{}

\newcommand{\OO}{\mathscr{O}}

\newcommand{\bk}{\underline{k}}
\newcommand{\cH}{\mathscr{H}}
\newtheorem{theo}[equation]{Theorem}
\newtheorem{lem}[equation]{Lemma}
\newtheorem{cor}[equation]{Corollary}
\newtheorem{pro}[equation]{Proposition}
\newcommand{\cW}{\mathscr{W}}

\theoremstyle{definition}
\newtheorem{defn}[equation]{Definition}
\newtheorem{re}[equation]{Remark}

\numberwithin{equation}{section}
\numberwithin{figure}{section}

\begin{document}
\title{Bernstein--Gelfand--Gelfand resolutions and the constructible $t$-structure}

\author{Gurbir Dhillon}

\address{Department of Mathematics, Stanford University, Stanford, CA
94305, USA}

\email{gsd@stanford.edu}

\date{\today}

\begin{abstract}We consider analogues of the Bernstein--Gelfand--Gelfand resolution in a highest weight category $\A^\heartsuit$. We prove the resulting category of complexes is a chain-level lift of the heart of the constructible $t$-structure on its bounded derived category $\A$. In particular, an object admits a Bernstein-Gelfand-Gelfand resolution if and only if a certain $\Ext$ vanishing with costandard objects holds. These results appear to be new even for Category $\OO$. 

\end{abstract}
\maketitle

\section{Introduction}
For any sheaf on a stratified space, one can filter it by sections extended by zero from an increasing union of open strata. In this paper we will simulate an instance of this construction and an associated spectral sequence for perverse cohomology in the setting of highest weight categories, and explain their connection to Bernstein--Gelfand--Gelfand resolutions. 

 The remainder of this introduction provides motivational discussion. Let $\g$ be a complex semisimple Lie algebra. For a choice of Borel subalgebra $\fb \subset \g$, consider the associated category $\OO$ of $\g$-modules. Write $\rho$ for the half sum of the positive roots, and $W$ for the Weyl group acting linearly on the dual abstract Cartan. To parametrize the regular block, denote by $M_w$ the Verma module with highest weight $-w\rho - \rho$, and $L_w$ for its simple quotient. Thus in this notation, $M_e$ is antidominant, and $L_{w_{\circ}}$ is the trivial representation, where $w_{\circ}$ is the longest element of the Weyl group.

 For an element $w$ of $W$, let us write $\ell_w$ for its length, and for a nonnegative integer $k$ let us denote the subset of $W$ consisting of elements of length $k$ by $W_k$. Bernstein, Gelfand, and Gelfand introduced a remarkable complex, since called the BGG resolution, of the form \begin{equation} 0 \rightarrow M_e \rightarrow \bigoplus_{w \in W_1} M_w \rightarrow \bigoplus_{w \in W_2} M_w \rightarrow \cdots \rightarrow  \bigoplus_{w \in W_{(\ell_{w_\circ} - 1)}} M_w \rightarrow  M_{w_\circ} \rightarrow 0, \label{cat} \end{equation}whose cohomology consists of $L_{w_\circ}$ in degree $\ell_{w_\circ}$ \cite{grahh}. In particular, Equation \eqref{cat} lifted the Weyl character formula from the Grothendieck group to the bounded derived category $D^b(\mathscr{O})$. 

For a general simple object $L_w$, its character is given by the Kazhdan--Lusztig character formula. A natural question, raised by Humphreys in section 6.5 of \cite{H3}, is to what extent this formula can again be lifted to a complex of Verma modules. Writing $\leqslant$ for the Bruhat order, it was shown by Gabber--Joseph, Hunziker--Enright, and Boe--Hunziker that for any $y$ in $W$ one can produce a complex \begin{equation}\label{gencase}  0 \rightarrow M_e \rightarrow \bigoplus_{w \in W_1: w \leqslant y} M_w \rightarrow \bigoplus_{w \in W_2: w \leqslant y} M_w \rightarrow \cdots \rightarrow  \bigoplus_{w \in W_{(\ell_{y} - 1)}: w \leqslant y} M_w \rightarrow  M_y \rightarrow 0, \end{equation}and its only cohomology is $L_y$ in degree $\ell_y$ if and only if the Kazhdan--Lusztig polynomials $P_{w,y}(\nu)$ are all degree zero \cite{bh, ehun, gj}. An analogous result for singular blocks was recently obtained by Mazorchuk--Mr\dj{}en \cite{mm}. One may still ask if there is an alternative to \eqref{gencase} which does tie Kazhdan--Lusztig polynomials to resolutions by Verma modules. 

The present work may be informally summarized as follows. Not only simple modules, but arbitrary objects of $D^b(\mathscr{O})$, have canonical resolutions of the form \begin{equation} \label{derrbgg}  M_e \otimes V_e \rightarrow \bigoplus_{w \in W_1} M_w \otimes V_w \rightarrow \bigoplus_{w \in W_2} M_w \otimes V_w \rightarrow \cdots \rightarrow M_{w_{\circ}} \otimes V_{w_{\circ}}, \end{equation}provided one is willing to take the multiplicity vector spaces $V_w$ to be themselves complexes, and to interpret exactness of the above sequence in terms of homotopy kernels and cokernels. The objects of $D^b(\mathscr{O})$ for which the $V_w$ are usual vector spaces, i.e. concentrated in cohomological degree zero, form an abelian subcategory of $D^b(\mathscr{O})$. Moreover, it is a familiar one, namely the heart of the constructible $t$-structure. That is, on the other side of Beilinson--Bernstein localization, complexes like the one \eqref{cat} first introduced by Bernstein--Gelfand--Gelfand are simply the abelian category of Schubert constructible sheaves on the flag variety.

\section{Statement of results} \label{sresuls}
The arguments we employ will apply to a general highest weight category. Accordingly, let $k$ be a field, and let $\mathscr{P}^\heartsuit$ be a $k$-linear highest weight category with bounded derived category $\mathscr{P}$. Let us denote the simple, standard, and costandard objects by \[L_w, \quad M_w, \quad A_w, \quad \quad w \in \mathscr{W}, \]respectively, for a finite partially ordered set $\mathscr{W}$.\footnote{The case of an infinite poset such that for each $w \in \mathscr{W}$, the collection of elements less than or equal to $w$ is finite, e.g. a block of affine Category $\OO$ at negative level, straightforwardly reduces to this case.} Let $\ell: \mathscr{W} \rightarrow \mathbb{Z}$ be a compatible length function, in  the sense of Cline--Parshall--Scott \cite{scott2}. I.e., if $w$ and $y$ are elements of $\mathscr{W}$ with $w < y$, then $\ell_w < \ell_y$. As before, for an integer $k$, write $\mathscr{W}_k$ for its preimage in $\mathscr{W}$.

\subsection{Derived resolutions} Let us write $\operatorname{Vect}^\heartsuit$ for the category of finite dimensional $k$ vector spaces, and $\operatorname{Vect}$ for its bounded derived category. Given an object $V$ in $\operatorname{Vect}$, and an object $c$ of a $k$-linear triangulated category, such as $\mathscr{P}$, one can form their tensor product, which we denote by $V \otimes c$. With this, let us formulate \eqref{derrbgg} precisely. 

\begin{theo} \label{derbgg} Write $\ell_\circ$ for the maximum length of any $w \in \W$, and $\ell^\circ$ for the minimal length of any $w \in \W$. For any $N$ in $\A$ there is a sequence of multiplicity complexes $V_w \in \Vect$, for $w \in \W$, and distinguished triangles \begin{align}\label{theseq}   N_1 \rightarrow & \hspace{-.1cm} \bigoplus_{w \in \cW_{\ell_\circ}} M_w \otimes V_w \rightarrow N \xrightarrow{+1}, \\  N_2 \rightarrow & \hspace{-.25cm}\bigoplus_{w \in \W_{\ell_\circ - 1}} M_w \otimes V_w \rightarrow N_1 \xrightarrow{+1}, \\ &  \hspace{.4cm} \vdots    \\  N_{\ell_\circ} \rightarrow & \hspace{-.25cm} \bigoplus_{w \in \W_{\ell^\circ  + 1}} M_w \otimes V_w \rightarrow N_{\ell_\circ - 1} \xrightarrow{+1}, \\ 0 \rightarrow & \hspace{-.1cm} \bigoplus_{w \in \W_{\ell^\circ}} M_w \otimes V_w \rightarrow N_{\ell_\circ} \xrightarrow{+1}. \end{align} 
\noindent This sequence is unique up to unique isomorphism and functorial in $N$. Moreover, writing $\mathbb{D}$ for the usual duality on $\Vect$, we have canonical isomorphisms \begin{equation} V_w \simeq (\mathbb{D} \RHom(N, A_w))[\ell_w - \ell_\circ]. \label{findstalks}\end{equation}

\end{theo}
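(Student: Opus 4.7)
The plan is to construct the filtration by descending induction on length, using that, by compatibility of $\ell$ with the order, elements of each length $k$ are maximal in the order-ideal $\{w : \ell_w \leq k\}$, so each length-layer corresponds to a semisimple open stratum.

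For the setup, write $\A^{\leq k}$ for the full triangulated subcategory of $\A$ generated by $\{L_w : \ell_w \leq k\}$. Since $\{w : \ell_w \leq k\}$ is an order-ideal, $\A^{\leq k}$ is a highest-weight subcategory of $\A$, and there is a recollement
\[ \A^{\leq k-1} \xrightarrow{i_*} \A^{\leq k} \xrightarrow{j^*} \A^k, \]
in which the Verdier quotient $\A^k$ is semisimple (because $\W_k$ is a set of maximal elements), decomposing as $\bigoplus_{w \in \W_k} \Vect \cdot \overline{L}_w$ with $\overline{L}_w := j^* L_w$. The characterization of standards and costandards as $j_!$- and $j_*$-extensions gives $j_! \overline{L}_w = M_w$ and $j_* \overline{L}_w = A_w$.

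With $N_0 := N$, I construct $N_j$ and $V_w$ inductively, assuming $N_{j-1} \in \A^{\leq \ell_\circ - j + 1}$. By semisimplicity of the quotient, there is a unique decomposition
\[ j^* N_{j-1} \simeq \bigoplus_{w \in \W_{\ell_\circ - j + 1}} \overline{L}_w \otimes V_w, \qquad V_w \in \Vect, \]
where $V_w$ is intrinsically given by $V_w = \RHom_{\A^{\ell_\circ - j + 1}}(\overline{L}_w, j^* N_{j-1})$. The counit $j_! j^* \to \id$ then yields the triangle
\[ N_j \to \bigoplus_{w \in \W_{\ell_\circ - j + 1}} M_w \otimes V_w \to N_{j-1} \xrightarrow{+1}, \]
whose fiber $N_j$, being killed by $j^*$, lies in $\A^{\leq \ell_\circ - j}$; the induction continues. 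It terminates at $j = \ell_\circ - \ell^\circ$, where $\A^{\leq \ell^\circ}$ is itself semisimple so $N_{\ell_\circ - \ell^\circ}$ decomposes directly as $\bigoplus_{w \in \W_{\ell^\circ}} M_w \otimes V_w$, encoded by the final displayed triangle of the theorem.

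For the identification of $V_w$ in \eqref{findstalks}: semisimplicity of $\A^{\ell_\circ - j + 1}$ gives $V_w \simeq \mathbb{D}\RHom_{\A^{\ell_\circ - j + 1}}(j^* N_{j-1}, \overline{L}_w)$, and $(j^*, j_*)$-adjunction together with $j_* \overline{L}_w = A_w$ converts this to $\mathbb{D}\RHom_\A(N_{j-1}, A_w)$. To pass from $N_{j-1}$ back to $N$, apply $\RHom(-, A_w)$ to each earlier triangle: for $i < j$ and $y \in \W_{\ell_\circ - i + 1}$, length forces $y \neq w$, so the Ext-orthogonality $\RHom(M_y, A_w) = 0$ gives $\RHom(N_i, A_w) \simeq \RHom(N_{i-1}, A_w)[1]$. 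Composing $j-1$ times yields $\RHom(N_{j-1}, A_w) \simeq \RHom(N, A_w)[\ell_\circ - \ell_w]$; dualizing produces $V_w \simeq (\mathbb{D} \RHom(N, A_w))[\ell_w - \ell_\circ]$ as claimed. Uniqueness and functoriality are then built in, since at each stage $V_w$ is defined as a Hom space and $N_j$ as the fiber of a canonical counit map.

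The main obstacle I anticipate is justifying the recollement structure and the identifications $j_! \overline{L}_w = M_w$ and $j_* \overline{L}_w = A_w$ at every intermediate stage, i.e., showing that peeling off an entire length-layer (rather than a single stratum) is compatible with the ambient highest-weight structure on $\A^{\leq k}$. While the single-stratum analogues are standard in Cline--Parshall--Scott theory, this multi-stratum formulation, together with the consistency of the nested recollements, is the main technical point that needs care.
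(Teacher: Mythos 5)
Your proposal is correct and takes essentially the same route as the paper: peel off the top length layer via the Cline--Parshall--Scott recollement for the closed subset $\{\,y : \ell_y \leqslant k-1\,\}$ (so the ``multi-stratum'' obstacle you anticipate is exactly what the paper's preliminaries, Proposition \ref{clos} and Theorem \ref{recc}, already supply for an arbitrary closed subset), identify the multiplicities by adjunction across the semisimple open quotient, and obtain \eqref{findstalks} from the orthogonality \eqref{rhomm} together with the shift bookkeeping you carry out. The one point to tighten is your final sentence: fibers of canonical maps are not functorial in a triangulated category, so uniqueness and functoriality should instead be argued as in the paper, either by identifying $N_j \simeq i_* i^* N_{j-1}[-1]$ through the recollement triangle or by the vanishing $i^* j_! = j^! i_* = 0$, which forces any map of the truncated objects to extend uniquely to a map of the triangles.
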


It is reasonable to think of the concatenation of the above triangles as a derived BGG resolution in $\A$, i.e. \begin{equation}  \label{ores} \cdots \rightarrow \bigoplus_{w \in \W_{\ell_\circ - 1}} M_w \otimes V_w \rightarrow \bigoplus_{w \in \W_{\ell_\circ}} M_w \otimes V_w \rightarrow N. \end{equation}In particular one can legitimately apply the Euler-Poincar\'{e} principle and obtain, denoting for an object $N$ of $\A$ its class in the Grothendieck group by $[N]$, the identity \[ [N] = \sum_{w \in \W} \chi \operatorname{RHom}(N, A_w) [M_w],\]which is due to Delorme for Category $\OO$ and Cline--Parshall--Scott in general \cite{scott3, delm}. Specializing to the motivating case of simple modules in $\OO$,  we obtain a slightly different perspective on Kazhdan-Lusztig polynomials $P_{w,y}(\nu).$ Namely, they count highest weight vectors of maximal length in the cohomology of syzygies of simple modules by complexes of Verma modules. Here the variable $\nu$ keeps track of in what cohomological degree the highest weight vector occurs.

We now make three remarks on Theorem \ref{derbgg}. First, once formulated, it is not difficult to prove - one simply iterates taking a (rotated) standard closed-open triangle \[i_* i^*[-1] \rightarrow j_! j^! \rightarrow \operatorname{id} \xrightarrow{+1}.\]Second, one can ask whether derived resolutions could be avoided by instead working with complexes of Verma modules in which a fixed $M_w$ occurs in multiple degrees. We will see in Section \ref{wex} that already for $\fsl_4$ one has simple objects in a regular block which are not quasi-isomorphic to any complex whose terms are sums of Verma modules. 

Third, intermediate between Theorem \ref{derbgg}, which takes place in $\A$, and Delorme's identity, which takes place in its Grothendieck group, we have the following corollary, which concerns the individual cohomology groups of an object of $\A$. 

\begin{cor} Let $N$ be an object of $\A$. Then there is a spectral sequence, functorial in $N$, of the form \[E_1^{p,q} = \bigoplus_{w \in \W_{\ell_\circ + p}} M_w \otimes \Ext^{-p-q}(N, A_w)^\vee \Rightarrow H^{p+q} N.\]\label{sqsq}
\end{cor}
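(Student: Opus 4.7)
The plan is to extract the desired spectral sequence from the iterated distinguished triangles supplied by Theorem \ref{derbgg} by applying the cohomology functor $H^\bullet : \A \to \A^\heartsuit$ of the given $t$-structure, and then to identify the $E_1$ page using the stalk formula \eqref{findstalks}.

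Writing $X_p := \bigoplus_{w \in \W_{\ell_\circ + p}} M_w \otimes V_w$ for $p$ running over $\{\ell^\circ - \ell_\circ, \ldots, 0\}$, the triangles of Theorem \ref{derbgg} assemble into a finite tower of objects in $\A$ starting at $N$ and terminating at $0$, whose successive cones are built from the $X_p$. Applying $H^\bullet$ to each distinguished triangle $N_{k+1} \to X_{-k} \to N_k \xrightarrow{+1}$ yields a long exact sequence in $\A^\heartsuit$, and the standard exact-couple formalism converts this collection of long exact sequences into a spectral sequence whose $E_1$ terms are computed from the $H^\bullet(X_p)$ and whose differential has bidegree $(+1, 0)$. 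Convergence to $H^\bullet N$ is then automatic: the finiteness of $\W$ ensures that the tower has only finitely many nontrivial levels, so the spectral sequence is bounded and strongly convergent.

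Identification of the $E_1$ page is immediate from the stalk formula. For $w \in \W_{\ell_\circ + p}$ one has $\ell_w - \ell_\circ = p$, so
\[ H^q V_w = H^{q+p}(\mathbb{D}\RHom(N, A_w)) = \Ext^{-p-q}(N, A_w)^\vee, \]
and hence, since $M_w \in \A^\heartsuit$,
\[ H^q X_p = \bigoplus_{w \in \W_{\ell_\circ + p}} M_w \otimes \Ext^{-p-q}(N, A_w)^\vee, \]
in perfect agreement with the claimed $E_1^{p,q}$. Functoriality in $N$ is inherited directly from the functoriality of the resolution asserted in Theorem \ref{derbgg}.

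I do not expect a genuine obstacle in any of the above. The only step that requires some care is the bookkeeping of bidegrees in the exact couple: matching the filtration index $p$ (which corresponds to $\ell_w - \ell_\circ$) with the cohomological index $q$ so that the abutment indeed lies in total degree $p + q$. The shift $[\ell_w - \ell_\circ]$ built into the stalk formula for $V_w$ is precisely what ensures this alignment works out, and it absorbs the cohomological shift that would otherwise be produced by the successive cones in the tower.
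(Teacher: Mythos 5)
Your argument is correct and follows essentially the same line as the paper: peel off strata one length at a time, feed the resulting tower into the spectral sequence machinery, and read off the $E_1$ page from \eqref{findstalks}. The one refinement the paper makes is that, rather than invoking the exact-couple formalism abstractly on the tower of distinguished triangles, it represents $N$ as a complex of projectives and shows that the increasing open filtration by $j_{n!}j_n^!$ lifts to a genuine chain-level filtration (via the termwise exact sequences appearing in the proof of Theorem~\ref{recc}), then applies the classical spectral sequence of a filtered complex; this sidesteps the choices and sign conventions that come with extracting a spectral sequence directly from a Postnikov tower in a triangulated category, and makes the claimed functoriality in $N$ immediate rather than something to argue for separately.
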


 In the motivating case when $\A^\heartsuit$ is the category of perverse sheaves on a suitably nice stratified space, Corollary \ref{sqsq} is a version of the Cousin spectral sequence, or better its Verdier dual, which calculates the perverse cohomology of a constructible complex.
 
 Let us apply this spectral sequence to a simple module $L_w$ in Category $\OO$. In this case, we find the sequence \eqref{gencase}, which was previously constructed for any $w$, as the lowest nonzero row of the $E^1$ page, i.e. with $q = \ell_\circ - \ell_w$. The coefficients of positive powers of $\nu$ in the Kazhdan--Lusztig polynomial contribute further complexes of Verma modules to the $E^1$ page in other rows. In particular, these create further differentials in the spectral sequence which account for the failure of \eqref{gencase} to resolve $L_w$, and finally on $E^\infty$ one obtains $L_w$ concentrated in position $(\ell_w - \ell_\circ, \ell_\circ - \ell_w)$. Thus, the derived BGG resolution of $L_w$ more concretely provides a `BGG spectral sequence' which computes $L_w$ in terms of Verma modules and Kazhdan--Lusztig polynomials. We include in Section \ref{wex} several worked examples of the spectral sequence.

\subsection{Ordinary resolutions} \label{ordres}Let us say an object of $\A$ admits an {ordinary} BGG resolution if it is quasi-isomorphic to a shift of a complex of the form \begin{equation}0 \rightarrow \bigoplus_{w \in \W_{\ell^\circ}} M_w \otimes V_w \rightarrow \bigoplus_{w \in \W_{\ell^\circ + 1}} M_w \otimes V_w \rightarrow \cdots \rightarrow \bigoplus_{w \in \W_{\ell_\circ}} M_w \otimes V_w \rightarrow 0.\label{heartcon}\end{equation}where the $V_w$ are objects of $\operatorname{Vect}^\heartsuit$, i.e. vector spaces in cohomological degree zero, and $M_w$ occurs in cohomological degree $\ell_w$. A natural question is then to characterize which objects of $\A$ admit ordinary BGG resolutions. We do so in the following theorem. 

To state it, recall that $\A$ has more than one natural $t$-structure.  There is always the standard $t$-structure, whose heart is $\A^\heartsuit$.  In the motivating example of a category of perverse sheaves, in favorable situations one can identify its derived category with the constructible derived category. Pushing the natural $t$-structure on the latter category through this equivalence produces an exotic $t$-structure on $\A$ whose heart consists of constructible sheaves. As first noted by Cline, Parshall, and Scott \cite{scott2}, one can mimic this exotic $t$-structure on $\A$ for a general highest weight category equipped with a length function. We will denote its heart by $\A_{con}^\heartsuit$, and can now state our main:

\begin{theo}\label{eq} Let $\operatorname{Ch}(\A^\heartsuit)$ denote the abelian category of bounded chain complexes with entries in $\A^\heartsuit$, and  write $\operatorname{BGG}$ for its full subcategory of objects of the form \eqref{heartcon}. Then the tautological composition $\operatorname{BGG} \rightarrow \operatorname{Ch}(\A^\heartsuit) \rightarrow \A$ yields an equivalence \begin{equation} \label{themap} \operatorname{BGG} \simeq \A^\heartsuit_{con}.\end{equation}\end{theo}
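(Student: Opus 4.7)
The plan is to establish the equivalence by showing the functor $\Phi: \operatorname{BGG} \to \A$ is fully faithful with essential image exactly $\A^\heartsuit_{con}$, using Theorem \ref{derbgg} to guide both directions. The first step is a concrete description of the constructible heart. Following the Cline-Parshall-Scott construction, the $t$-structure is defined so that $M_w[-\ell_w] \in \A^\heartsuit_{con}$ for each $w$, and using the semi-orthogonality $\Ext^\bullet(M_{w'}, A_w) = \delta_{w'w} k$ this propagates to the criterion that $N \in \A^\heartsuit_{con}$ iff $\RHom(N, A_w)$ is concentrated in cohomological degree $-\ell_w$ for every $w \in \W$. By the formula $V_w \simeq (\mathbb{D}\RHom(N,A_w))[\ell_w - \ell_\circ]$ of Theorem \ref{derbgg}, this translates to the requirement that all derived multiplicities $V_w$ lie in a single cohomological degree, namely $\ell_\circ$.

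For $C \in \operatorname{BGG}$ with $C^k = \bigoplus_{w \in \W_k} M_w \otimes V_w$, a termwise application of $\Hom(-, A_{w'})$ computes $\RHom(\Phi C, A_{w'})$, and the semi-orthogonality collapses the resulting double complex to $V_{w'}^\vee$ placed in cohomological degree $-\ell_{w'}$. Hence $\Phi C$ lies in $\A^\heartsuit_{con}$, and its derived multiplicities are the given $V_w$ shifted into degree $\ell_\circ$. Conversely, for $N \in \A^\heartsuit_{con}$, Theorem \ref{derbgg} exhibits $N$ as a successive extension whose graded pieces are $\bigoplus_{w \in \W_k} M_w \otimes V_w$ with each $V_w \simeq V_w^{\text{ord}}[-\ell_\circ]$ for some $V_w^{\text{ord}} \in \Vect^\heartsuit$. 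I would then assemble these pieces into a chain complex $C$ with $C^k = \bigoplus_{w \in \W_k} M_w \otimes V_w^{\text{ord}}$, taking as differentials the connecting maps of the triangles; $d^2 = 0$ follows from the octahedral axiom, and the finiteness of the filtration yields $\Phi C \simeq N$.

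The main obstacle is this last assembly step, together with the closely related check of full faithfulness. Both hinge on a common family of $\Hom$-vanishings among standards, stemming from the compatibility of the length function with the highest weight partial order. Specifically, the general fact that $\Hom(M_w, M_{w'}) \neq 0$ forces $w \leq w'$ in the highest weight order, combined with $w < w' \Rightarrow \ell_w < \ell_{w'}$, gives $\Hom(M_w, M_{w'}) = 0$ whenever $\ell_w > \ell_{w'}$ and $w \neq w'$. This vanishing trivializes any chain homotopy $h^k: C^k \to C'^{k-1}$ between BGG complexes, and analogously kills the obstruction to upgrading the connecting maps of the derived filtration into honest chain complex differentials in $\A^\heartsuit$. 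With these vanishings in place, $\Hom_\A(\Phi C, \Phi C')$ reduces to $\Hom_{\operatorname{Ch}(\A^\heartsuit)}(C, C') = \Hom_{\operatorname{BGG}}(C, C')$, completing the equivalence.
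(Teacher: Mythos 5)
Your overall architecture matches the paper's: characterize $\A^\heartsuit_{con}$ by concentration of $\RHom(-,A_w)$ in degree $-\ell_w$, check termwise that a BGG complex satisfies this, and get essential surjectivity from the derived resolution of Theorem \ref{derbgg} once the multiplicities are concentrated in a single degree. However, the two places you yourself flag as the crux are exactly where the argument as written has gaps. First, full faithfulness: the vanishing $\Hom(M_w,M_{w'})=0$ for $\ell_w>\ell_{w'}$ does kill chain homotopies, hence identifies $\Hom_{\operatorname{Ch}(\A^\heartsuit)}(C,C')$ with morphisms in the homotopy category; but it does not show that the map to $\Hom_\A(\Phi C,\Phi C')$ is bijective, and that passage from homotopy-category Homs to derived-category Homs is precisely the content of full faithfulness. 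A chain map that vanishes in $\A$ need not be null-homotopic, and a morphism in $\A$ need not be represented by an honest chain map; ruling these out requires either the full derived semiorthogonality $\RHom(M_x,M_y)=0$ for $x\not\leqslant y$ (not just its degree-zero part) fed into a d\'evissage or hypercohomology spectral sequence for $\RHom(\Phi C,\Phi C')$, or the paper's route: injectivity by applying $\RHom(-,A_w)$ and recovering the component maps $V_w\to V'_w$, and surjectivity from the functoriality and uniqueness of the derived resolutions of Theorem \ref{derbgg} together with an iterated coning argument.

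Second, the assembly step for essential surjectivity is asserted rather than proved: taking the connecting maps $B_i\to A_i\to B_{i-1}$ as differentials does give $d^2=0$ (this is automatic from consecutive maps in a triangle, no octahedron needed), but having a complex whose terms are the graded pieces of the filtration does not by itself show that its image in $\A$ is isomorphic to $N$; ``finiteness of the filtration'' is not an argument that $N$ is the convolution of that complex. The paper supplies this in Lemma \ref{hom}: one builds the complex inductively, using that $\Hom_\A(A_i,B_{i-1})\simeq\Hom_{\A^\heartsuit}(\tau^{\geqslant 0}A_i,B_{i-1})$ because $B_{i-1}$ lies in the standard heart, so each attaching map is realized as an honest chain map out of the complex constructed so far and the next stage is its cone. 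Your proposed fix via $\Hom$-vanishing among standards is aimed at the wrong obstruction here (the connecting maps are already maps in $\A^\heartsuit$); what is missing is the identification of $N$ with the totalization, which the inductive cone argument provides. With Lemma \ref{hom}-style coning and the corrected full faithfulness argument, your outline does become a proof along essentially the paper's lines.
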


We would like to make four remarks concerning Theorem \ref{eq}. First, unlike Theorem \ref{derbgg}, even after formulating Theorem \ref{eq}, it requires some thought to see the fully faithfulness and essential surjectivity of \eqref{themap}. 

Second, when $\mathscr{P}$ is a regular block of Category $\OO$, note that the known handful of examples of objects admitting ordinary resolutions in $\OO$, i.e. the rationally smooth $L_y$ and the parabolic Verma modules, correspond to shifted constant sheaves on unions of Schubert cells, at which point the theorem guarantees them unique up to unique isomorphism ordinary resolutions. This method is quite different from the usual arguments for existence and uniqueness,  due e.g. to Bernstein--Gelfand--Gelfand and Rocha-Caridi for $\g$ semisimple \cite{grahh, rc} and extended to Kac--Moody algebras by Rocha-Caridi--Wallach \cite{rc2}, which rest on the combinatorics of Bruhat squares. 

Third, note that while perverse sheaves do not have canonical models as chain complexes with constructible cohomology, Theorem \ref{eq} implies that, for suitable stratifications, constructible sheaves {do} have canonical models as complexes of perverse sheaves. 

Finally, if we specialize Theorem \ref{eq} from categories to objects we obtain the following corollaries, which would be particularly nonobvious if only stated for objects of $\A^\heartsuit$. 

\begin{cor} \label{shs}Let $N$ be an object of $\A$. Then $N$ admits an ordinary BGG resolution if and only if for some $m \in \Z$, the shift $N[m]$ satisfies for every $w \in \W$ the vanishing condition  \begin{equation} \Ext^n(N[m], A_w)  = 0,  \quad \quad  \forall  n \neq -\ell_w. \label{eshs} \end{equation}

\end{cor}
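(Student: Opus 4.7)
The plan is to combine Theorems \ref{derbgg} and \ref{eq}. By the definition of an ordinary BGG resolution together with Theorem \ref{eq}, $N$ admits one if and only if, for some integer $m$, the shift $N[m]$ lies in $\A^\heartsuit_{con}$; that is, $N[m]$ is quasi-isomorphic to a complex $C$ of form \eqref{heartcon}. Writing $N' := N[m]$, the corollary reduces to the following claim: $N' \in \A^\heartsuit_{con}$ if and only if $\Ext^n(N', A_w) = 0$ for every $w \in \W$ and $n \neq -\ell_w$.

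For the forward implication, I would argue directly. Suppose $N' \simeq C$ with $C$ of form \eqref{heartcon} and multiplicities $U_w \in \Vect^\heartsuit$. Using the defining Ext-orthogonality $\Ext^i(M_w, A_{w'}) = \delta_{w,w'}\delta_{i,0}\,k$ of standards and costandards, each term $C^p$ is $A_{w'}$-acyclic, so that $\RHom(C, A_{w'})$ is computed at the chain level as $\Hom^\bullet(C, A_{w'})$. Only the component $M_{w'} \otimes U_{w'}$ in cohomological degree $\ell_{w'}$ contributes, giving $\Ext^{-\ell_{w'}}(C, A_{w'}) \simeq U_{w'}^\vee$ and $\Ext^n(C, A_{w'}) = 0$ for all $n \neq -\ell_{w'}$.

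For the converse, the formula $V_w \simeq \mathbb{D}\RHom(N', A_w)[\ell_w - \ell_\circ]$ from Theorem \ref{derbgg} gives $H^i(V_w) \simeq \Ext^{\ell_\circ - \ell_w - i}(N', A_w)^\vee$. The Ext hypothesis therefore forces every $V_w$ to be concentrated in the single cohomological degree $\ell_\circ$, with underlying vector space $U_w := \Ext^{-\ell_w}(N', A_w)^\vee$. It remains to upgrade this data to an honest chain complex $C \in \operatorname{BGG}$ with $C \simeq N'$ in $\A$; this I expect to be the main obstacle. Under the concentration hypothesis the iterative triangles of Theorem \ref{derbgg} involve only objects of $\A^\heartsuit$ placed in the fixed cohomological position $\ell_\circ$, and the equivalence of Theorem \ref{eq} is precisely what realizes these iterated cones as a bounded chain complex of form \eqref{heartcon} quasi-isomorphic to $N'$.
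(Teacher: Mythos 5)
Your overall strategy -- reducing via Theorem~\ref{eq} to the claim that $N' \in \A^\heartsuit_{con}$ is equivalent to the Ext vanishing -- is exactly how the paper presents Corollary~\ref{shs}: as a specialization of Theorem~\ref{eq} from categories to objects. Your forward direction (acyclicity of standards against costandards, computing $\RHom(C, A_w)$ at the chain level) matches the paper's argument for the corresponding implication in Theorem~\ref{ordresobj}. The converse, however, is where the proposal becomes vague in a way worth flagging. You correctly deduce from \eqref{findstalks} that each $V_w$ is concentrated in a single degree, and you correctly identify the upgrade to an honest chain complex as ``the main obstacle.'' But Theorem~\ref{eq} is not the tool that performs this upgrade: it asserts an equivalence $\operatorname{BGG} \simeq \A^\heartsuit_{con}$, and to invoke its essential surjectivity you would first need to know that $N'$ lies in $\A^\heartsuit_{con}$, which is what you are trying to establish. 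The paper closes the loop in two cleaner ways, and you should commit to one of them: either (a) cite Lemma~\ref{hom}, which is precisely the device that rotates the iterated distinguished triangles of Theorem~\ref{derbgg} with concentrated multiplicities into an actual complex of objects of $\A^\heartsuit$, thereby producing the BGG complex directly (this is the route of Theorem~\ref{ordresobj}); or (b) cite Corollary~\ref{esssurj}, which characterizes $\A^\heartsuit_{con}$ intrinsically by the Ext vanishing, so that the hypothesis on $N'$ immediately places $N'$ in the heart, after which Theorem~\ref{eq} supplies the BGG complex --- and then the detour through $V_w$ and the iterated triangles is unnecessary. As written, your appeal to Theorem~\ref{eq} to ``realize the iterated cones'' conflates the statement of the equivalence with the construction underlying it; with either Lemma~\ref{hom} or Corollary~\ref{esssurj} substituted in, the proof is complete and essentially agrees with the paper's.
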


\begin{cor} If an object $N$ of $\A$ admits an ordinary BGG resolution, so does any summand of $N$. \label{cor1} 
\end{cor}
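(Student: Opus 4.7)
The plan is to deduce Corollary \ref{cor1} essentially formally from Corollary \ref{shs}. First I would rephrase the conclusion using that characterization: saying $N$ admits an ordinary BGG resolution is equivalent to the existence of some shift $m \in \Z$ for which $\Ext^n(N[m], A_w) = 0$ whenever $n \neq -\ell_w$, for every $w \in \W$. So the task reduces to checking that this $\Ext$-vanishing condition is inherited by summands, with the same shift.

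Given a direct sum decomposition $N \simeq N' \oplus N''$ in $\A$, the additivity of $\Hom$ in the first variable (which passes to its derived functor and hence to $\Ext$) yields
\begin{equation*}
\Ext^n(N[m], A_w) \simeq \Ext^n(N'[m], A_w) \oplus \Ext^n(N''[m], A_w)
\end{equation*}
for all $n$ and $w$. Since a direct sum of $k$-vector spaces vanishes if and only if each summand does, the hypothesized vanishing \eqref{eshs} for $N$ immediately implies the analogous vanishing for both $N'$ and $N''$, with the same $m$. A second application of Corollary \ref{shs}, now in the reverse direction, then produces ordinary BGG resolutions for each summand.

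There is no real obstacle: the substance of the argument is contained in Corollary \ref{shs}, which replaces the a priori combinatorial-looking condition of admitting a chain-level resolution of the form \eqref{heartcon} by a manifestly additive $\Ext$-vanishing criterion. It is perhaps worth emphasizing in the writeup that, directly from the definition, it would be far from obvious how to extract from a complex resolving $N' \oplus N''$ a subcomplex of the same shape resolving $N'$; Theorem \ref{eq}, via its Corollary \ref{shs}, is what makes this automatic.
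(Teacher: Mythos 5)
Your argument is correct and is exactly the route the paper intends: it states that this corollary follows immediately from Theorem \ref{ordresobj} (equivalently Corollary \ref{shs}), and the "immediate" step is precisely the additivity of $\Ext(-, A_w)$ over direct summands that you spell out. Nothing further is needed.
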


\begin{cor} Consider a distinguished triangle in $\A$ $$N' \rightarrow N \rightarrow N'' \xrightarrow{+1}.$$If $N'$ and $N''$ admit ordinary BGG resolutions, for the same shift $[m]$, then so does $N$.  \label{cor2}
\end{cor}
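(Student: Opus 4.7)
The plan is to reduce the statement to an \emph{Ext}-vanishing check via Corollary \ref{shs}, and then extract that vanishing from the long exact sequence of Ext groups associated to the given distinguished triangle.

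First, I would invoke Corollary \ref{shs} to translate the hypotheses: since $N'$ and $N''$ admit ordinary BGG resolutions for the same shift $[m]$, we have for every $w \in \W$ the vanishings
\[ \Ext^n(N'[m], A_w) = 0 \quad \text{and} \quad \Ext^n(N''[m], A_w) = 0 \quad \text{for all } n \neq -\ell_w. \]
Our goal then becomes to show that $N[m]$ satisfies the same vanishing, after which one more application of Corollary \ref{shs} will conclude the proof.

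Next, I would shift the given distinguished triangle by $[m]$ to obtain $N'[m] \to N[m] \to N''[m] \xrightarrow{+1}$, and apply the cohomological functor $\RHom(-, A_w)$. This produces a long exact sequence
\[ \cdots \to \Ext^{n-1}(N'[m], A_w) \to \Ext^n(N''[m], A_w) \to \Ext^n(N[m], A_w) \to \Ext^n(N'[m], A_w) \to \cdots. \]
For any fixed $w \in \W$ and any integer $n \neq -\ell_w$, both neighboring terms $\Ext^n(N''[m], A_w)$ and $\Ext^n(N'[m], A_w)$ vanish by hypothesis, so the middle term $\Ext^n(N[m], A_w)$ is squeezed to zero. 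Thus $N[m]$ satisfies the hypothesis of Corollary \ref{shs}, and $N$ admits an ordinary BGG resolution.

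There is no real obstacle: the only subtlety worth flagging is that one must use the \emph{same} shift $[m]$ for both $N'$ and $N''$, since otherwise the two vanishing conditions would be concentrated in different cohomological degrees and the long exact sequence argument would fail to pinch $\Ext^n(N[m], A_w)$ to zero away from $n = -\ell_w$. This is precisely why the hypothesis of the corollary is phrased with a common $[m]$.
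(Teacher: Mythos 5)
Your proof is correct and is essentially the paper's own argument: the paper derives Corollary \ref{cor2} immediately from the Ext-vanishing characterization (Theorem \ref{ordresobj}/Corollary \ref{shs}), with the long exact sequence in $\Ext(-, A_w)$ being exactly the implicit step you have spelled out. Your remark about why a common shift $[m]$ is needed is also accurate.
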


These corollaries may remind the reader of the homological criterion for an object $N$ of $\mathscr{P}$ to admit a standard flag, i.e. a filtration with successive quotients standard modules, namely that for every $w \in \W$ one has the vanishing \begin{equation}\operatorname{Ext}^n(N, A_w) = 0, \quad \quad \forall n \neq 0.\label{stfil}\end{equation} This is not a coincidence. In the spectral sequence of Corollary \ref{sqsq}, the vanishing condition \eqref{stfil} places the nonzero terms of the $E^1$ page on the zeroth diagonal, forcing its collapse and yielding the desired flag.\footnote{In truth, we do not know a reference for this assertion for objects of $\mathscr{P}$, rather than just $\mathscr{P}^\heartsuit$, but presumably it is known.} The vanishing condition \eqref{eshs} places the nonzero terms of the $E^1$ page on a single row, yielding a complex of the form \eqref{heartcon}.

\subsection{Highest weight modules admitting ordinary resolutions} In the final section we consider the problem of classifying highest weight modules, i.e.  quotients of a single standard object $M_w$, which admit ordinary BGG resolutions. 

Questions about a highest weight category of constructible perverse sheaves are often equivalent to questions about the topology of stratum closures. In this section we obtain the topological translation of the above question. This perspective will grant us several corollaries useful for the construction of ordinary resolutions.

Let $X$ be a separated complex algebraic variety with a stratification by cells $C_w \simeq \mathbb{A}^{\ell_w},$ indexed by $w \in \W$. Write $X_w$ for the closure of $C_w$. Consider the highest weight category of perverse sheaves on $X$ smooth along the strata with coefficients in a field $k$. Thus the partial order on $\W$ is given by closure, and the length function records the dimension of the cell. 

Assume that $(\star)$ for every $y$ and $w$ in $\W$ with $y \leqslant w$, the link of the stratum $C_y$ in $X_w$ is connected. As a first consequence of $(\star)$, $\Hom(M_y, M_w)$ is one dimensional whenever $X_y$ is a divisor of $X_w$. The quotients of $M_w$ admitting an ordinary BGG resolution are then naturally indexed by certain divisors on $X_w$. Namely, for any collection of Schubert divisors $X_y$ of $X_w$, $y \in I$, we consider the quotient $$\bigoplus_{y \in I} M_y \otimes \Hom(M_y, M_w) \rightarrow M_w \rightarrow M_I \rightarrow 0.$$A quotient of $M_w$ admitting a BGG resolution is isomorphic to $M_I$ for a unique $I \subset D_w := \{ y < w: \ell_w - \ell_y = 1\}$. We can now restate the classification problem as determining for which $I$ does $M_I$ admit such a resolution. The desired translation reads as:

\begin{theo} Keep the assumption $(\star)$ and the notation of above. Fix a collection of divisors $I \subset D_w$ and write $J$ for the remaining divisors: $D_w = I \sqcup J$. Then the lowest nonzero cohomology sheaf of $M_{I}$ is given by the constant sheaf on $U_J := X_w \setminus \cup_{y \in J} X_y$ extended by zero to all of $X_w$, i.e. $$\cH^{-\ell_w}M_I \simeq j_{!} \bk_{U_{J}}.$$In particular, $M_I$ admits a BGG resolution if and only if the latter sheaf is perverse. \label{geobgg}
\end{theo}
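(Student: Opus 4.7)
The plan is to identify $M_I$ with the perverse zeroth cohomology of an explicit shifted sheaf, construct a canonical map $M_I \to j_!\bk_{U_J}[\ell_w]$ and verify its behavior on lowest cohomology, and then derive the perversity criterion from Theorem~\ref{eq}.

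First, I would form the defining triangle
\[
\bigoplus_{y \in I} M_y \otimes \Hom(M_y, M_w) \to M_w \to C \xrightarrow{+1},
\]
so $M_I = {}^p\cH^0 C$. Under the geometric identifications $M_w = j_{C_w,!}\bk_{C_w}[\ell_w]$ and $M_y = j_{C_y,!}\bk_{C_y}[\ell_w-1]$ for $y \in D_w$, the long exact sequence of standard cohomology sheaves collapses $C$ into a single shifted sheaf $\mathcal{F}[\ell_w]$, with $\mathcal{F}$ an extension of $\bigoplus_{y \in I} j_{C_y,!}\bk_{C_y}$ by $j_{C_w,!}\bk_{C_w}$. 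Under $(\star)$, the identification $\Hom(M_y,M_w) \simeq \Ext^1(j_{C_y,!}\bk_{C_y}, j_{C_w,!}\bk_{C_w}) \simeq \bk$ and its canonical generator give $\mathcal{F} \simeq j_{V,!}\bk_V$, where $V := C_w \sqcup \bigsqcup_{y \in I} C_y$.

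Next, construct a canonical map $\phi : M_I \to j_!\bk_{U_J}[\ell_w]$. The sheaf-level inclusion $\bk_{C_w} \hookrightarrow \bk_{U_J}$ (as $C_w \subset U_J$) produces a map $M_w \to j_!\bk_{U_J}[\ell_w]$; for each $y \in I$, the composition $M_y \to M_w \to j_!\bk_{U_J}[\ell_w]$ lies in $\Hom(M_y,j_!\bk_{U_J}[\ell_w])$, which by adjunction equals $H^1(C_y;\bk)$ and so vanishes because $C_y \simeq \mathbb{A}^{\ell_y}$ is affine. Thus $\phi$ factors through $M_I$. To see $\phi$ is an isomorphism on $\cH^{-\ell_w}$, localize via the open inclusion $\tilde j : U_J \hookrightarrow X_w$: since $\tilde j^*$ is t-exact for the perverse t-structure, $\tilde j^* M_I$ is the cokernel of the corresponding evaluation map on $U_J$, where now $I$ exhausts all Schubert divisors of $U_J$. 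A composition-factor analysis, using Euler-characteristic identities at each stratum and the one-dimensionality of $\Hom(M_y,M_w)$ from $(\star)$, shows $\cH^{-\ell_w}(\tilde j^* M_I) \simeq \bk_{U_J}$, so $\tilde j^*\phi$ is an isomorphism on $\cH^{-\ell_w}$. For the complement $Z = X_w \setminus U_J$, apply $i^*$ to the defining triangle: since $C_w \subset U_J$ and $C_y \subset U_J$ for $y \in I$, one has $i^* M_w = 0 = i^* M_y$, hence $i^* C = 0$; the residual contribution to $\cH^{-\ell_w} M_I|_Z$ then comes from the perverse kernel ${}^p\cH^{-1}C$ on $Z$, and a stalk-wise analysis using $(\star)$ together with the affineness of cells rules out any lowest-degree stalks on $Z$. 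Combining, $\cH^{-\ell_w}M_I \simeq j_!\bk_{U_J}$.

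Finally, for the perversity criterion, invoke Theorem~\ref{eq}: $M_I$ admits an ordinary BGG resolution iff $M_I \in \A^\heartsuit_{con}$. In the perverse-sheaf setting, the tautological equivalence $D^b(\A^\heartsuit) \simeq D^b_c(X)$ identifies $\A^\heartsuit_{con}$ with the category of shifted constructible sheaves, so $M_I \in \A^\heartsuit_{con}$ iff $M_I$ is quasi-isomorphic to a single shifted sheaf. By the identification of $\cH^{-\ell_w} M_I$, this shifted sheaf must be $j_!\bk_{U_J}[\ell_w]$; thus $M_I$ admits a BGG resolution iff $j_!\bk_{U_J}[\ell_w]$ is perverse. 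In that case, $\phi$ is a map of perverse sheaves inducing an isomorphism on the only nonzero cohomology sheaf, hence is itself an isomorphism. The main obstacle is the vanishing $\cH^{-\ell_w}M_I|_Z = 0$: the local computation on $U_J$ is accessible via composition-factor matching, but controlling the lowest cohomology sheaf on $Z$ is subtler because the perverse kernel ${}^p\cH^{-1}C$ may be supported on deep strata in $Z$ where the perversity bounds alone do not force the requisite vanishing; the geometric input from $(\star)$ and the affine structure of the cells is essential here.
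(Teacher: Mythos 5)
Your opening move is sound and even elegant: the cone $C$ of the evaluation map $\bigoplus_{y \in I} M_y \otimes \Hom(M_y,M_w) \to M_w$ is indeed a single shifted sheaf, namely $j_{V,!}\bk_V[\ell_w]$ with $V = C_w \sqcup \bigsqcup_{y\in I} C_y$, since the extension class is componentwise nonzero in the one-dimensional $\Ext^1(j_!\bk_{C_y}, j_!\bk_{C_w})$. But from there the argument has genuine gaps exactly where the theorem has content. First, $M_I = {}^p\cH^0 C$ differs from $C$ by the perverse kernel $P = {}^p\cH^{-1}C$ of the evaluation map, and the long exact sequence gives $\cH^{-\ell_w+1}P \to j_!\bk_V \to \cH^{-\ell_w}M_I \to \cH^{-\ell_w+2}P \to 0$; the whole point is to show this modification extends the constant sheaf from $V$ across the codimension $\geqslant 2$ strata of $U_J$ and adds nothing along $Z = \cup_{y\in J}X_y$. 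You defer the first to ``a composition-factor analysis using Euler-characteristic identities at each stratum,'' but such data only controls classes in the Grothendieck group (and not even that, since $[M_I]$ depends on the unknown kernel $P$); it cannot determine an individual cohomology sheaf such as $\cH^{-\ell_w}$ on deep strata. You yourself flag the vanishing on $Z$ as unresolved. These are precisely the steps the paper proves, and by a different mechanism: it uses $\cosoc M_I \simeq L_w$, the exact sequence $0 \to K \to M_I \to L_w \to 0$ with $K$ the radical (supported in codimension $\geqslant 1$, so $\cH^{-\ell_w}K = 0$), the identification $\cH^{-\ell_w}L_w \simeq \bk_{X_w}$ from $(\star)$, and then bounds $\cH^{-\ell_w+1}K$ by $\bigoplus_{y \in D_w}\bk_{X_y}\otimes V_y$ via the recollement triangle on $\partial X_w$ (the codimension-two part $i_!i^!K$ lies in nonnegative perverse degrees, the divisor-cell restriction $j^*K$ is a sum of shifted constant sheaves, and $(\star)$ controls $\cH$ of $j_*$). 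This exhibits $\cH^{-\ell_w}M_I$ as the kernel of a map $\bk_{X_w} \to \bigoplus_y \bk_{X_y}\otimes V_y$, hence a $!$-extension from the complement of full divisor closures; the codimension-$\leqslant 1$ multiplicity-one analysis (Lemma \ref{poles}) then identifies the divisors as $J$. Note also that $(\star)$ is needed at \emph{all} strata $y \leqslant w$, not only through the one-dimensionality of $\Hom(M_y,M_w)$ for divisors, which is the only way you invoke it.

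A secondary problem is the construction of $\phi: M_I \to j_!\bk_{U_J}[\ell_w]$ itself. You can kill the composites from $M_y$ and factor through the cone $C$, but factoring further through $M_I = {}^p\cH^0 C$ requires the composite $P[1] \to C \to j_!\bk_{U_J}[\ell_w]$ to vanish, i.e. essentially that $j_!\bk_{U_J}[\ell_w]$ lie in ${}^p D^{\geqslant 0}$; since the target is only formally in ${}^p D^{\leqslant 0}$, and its perversity is exactly the condition the theorem is testing, this factorization is not available in the unconditional part of the statement (when $j_!\bk_{U_J}[\ell_w]$ is not perverse, $\phi$ may not exist). Finally, a small slip: $\Hom(M_y, j_!\bk_{U_J}[\ell_w]) \simeq H^1(C_y;\bk)$ vanishes because $C_y \simeq \mathbb{A}^{\ell_y}$ is contractible, not because it is affine. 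The last step, deducing the perversity criterion from the constructible-heart characterization (Theorem \ref{eq} / Corollary \ref{shs}), is fine and agrees with the paper's Theorem \ref{topc}.
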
 

We first remark on the applicability of Theorem \ref{geobgg}. By Beilinson-Bernstein localization \cite{bebe}, the theorem applies to a regular block of Category $\OO$ for a semisimple Lie algebra. Conditional on the Finkelberg-Mirkovi\'c conjecture \cite{fm}, it also applies to a regular block of the category of modular representations of a reductive group in sufficiently large characteristic.  

We next briefly discuss the $!$-extension of a constant sheaf appearing in Theorem \ref{geobgg}. When the inclusion $U_J \rightarrow X_w$ is affine, e.g. when its complement $X_J$ is an effective Cartier divisor, then the topological criterion of Theorem \ref{geobgg} is intrinsic to $U_J$. In general one needs also to consider the extrinsic geometry of the embedding of $U_J$ into $X_w$. 

Finally, we mention two corollaries of Theorem \ref{geobgg}. The first extends the aforementioned results on simple modules in category $\OO$ to this context. The second may be more surprising and really uses the constructible picture of BGG resolutions developed in this paper.

\begin{cor} For any $w \in \W$, the intersection complex $L_w$ on $X_w$ admits an ordinary BGG resolution if and only if $X_w$ is $k$-smooth. 
\end{cor}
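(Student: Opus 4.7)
The plan is to apply Theorem~\ref{geobgg} and single out, among the quotients $M_I$, the unique candidate that could be isomorphic to $L_w$. Suppose first that $L_w$ admits an ordinary BGG resolution. Since $L_w$ is a quotient of $M_w$, the classification stated just before Theorem~\ref{geobgg} gives a unique $I \subset D_w$ with $L_w \simeq M_I$, and writing $J = D_w \setminus I$, Theorem~\ref{geobgg} then identifies $M_I$ with the perverse sheaf $j_{!}\bk_{U_J}[\ell_w]$. Since $L_w = \IC(X_w)$ has full support $X_w$, whereas $j_{!}\bk_{U_J}$ is supported on the open subset $U_J \subseteq X_w$, matching supports forces $U_J = X_w$, hence $J = \emptyset$ and $I = D_w$. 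We conclude $L_w \simeq \bk_{X_w}[\ell_w]$, and the perversity of this shifted constant sheaf is exactly the defining condition for $X_w$ to be $k$-smooth.

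For the converse, suppose $X_w$ is $k$-smooth, so that $\bk_{X_w}[\ell_w]$ is a perverse sheaf. Being irreducible on the open stratum $C_w$ and supported on the irreducible set $X_w$, it must coincide with the intersection complex $L_w$. Now apply Theorem~\ref{geobgg} with $I = D_w$, so that $J = \emptyset$ and $U_J = X_w$: the sheaf $j_{!}\bk_{X_w}[\ell_w] = \bk_{X_w}[\ell_w]$ is perverse by hypothesis, whence $M_{D_w}$ admits an ordinary BGG resolution. Under this identification $M_{D_w} \simeq L_w$, so $L_w$ itself inherits the resolution.

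Given Theorem~\ref{geobgg}, the essential content is the support comparison that pins down $I = D_w$, together with the classical fact that an IC sheaf on an irreducible variety is a shifted constant sheaf precisely when the underlying space is $k$-smooth. I do not anticipate any real obstacle here; the bulk of the work has been absorbed into the earlier Theorem~\ref{geobgg}, and the corollary is essentially a matter of reading off supports of the two candidate perverse sheaves.
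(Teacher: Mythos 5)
Your forward direction is correct and matches the paper's approach: both deduce $L_w \simeq M_I$ for a unique $I$, invoke Theorem~\ref{geobgg}/\ref{topc} to identify $M_I$ with $j_!\bk_{U_J}[\ell_w]$, and then pin down $I = D_w$ by comparing supports.

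The backward direction, however, contains a genuine gap. You write that a perverse sheaf which restricts to $\bk[\ell_w]$ on the dense stratum $C_w$ and is supported on the irreducible variety $X_w$ ``must coincide with the intersection complex $L_w$.'' That inference is false: the standard object $M_w = \iota^w_!\bk[d_w]$ itself is perverse, supported on $X_w$, and restricts to $\bk[d_w]$ on $C_w$, but equals $L_w$ only when $X_w = C_w$. The point is that full support plus a rank-one restriction on the open stratum does not force a perverse sheaf to be simple, hence does not force it to be an IC sheaf. Relatedly, your earlier remark that ``the perversity of this shifted constant sheaf is exactly the defining condition for $X_w$ to be $k$-smooth'' overstates what perversity gives you: perversity of $\bk_{X_w}[d_w]$ imposes the costalk bound $\geqslant -d_y$ on each stratum $C_y$, whereas $k$-smoothness (i.e.\ $\IC_{X_w} = \bk_{X_w}[d_w]$) requires the strict bound $> -d_y$, and these can differ (e.g.\ for cone singularities whose link has torsion in the relevant degree). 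The paper sidesteps all of this by taking the correct definition of $k$-smoothness directly: if $X_w$ is $k$-smooth then $L_w \simeq \bk_{X_w}[d_w]$ by definition, and this object satisfies the stalk-concentration criterion of Theorem~\ref{ordresobj} (because $\RHom(\bk_{X_w}[d_w], A_y)$ reduces to cohomology of the contractible cell $C_y$, placed in a single degree), hence admits an ordinary resolution. If you replace the ``irreducibility'' step with that argument, the backward direction is fixed.
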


\begin{cor} Fix $w \in \W$, and $I \subset I' \subset D_w$. If $M_{I'}$ admits an ordinary BGG resolution, so does $M_I$. 
\end{cor}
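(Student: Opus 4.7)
The plan is to use Theorem~\ref{geobgg} to convert the statement into a question about perverse sheaves. Writing $\mathcal{F}_S := j_{D_w\setminus S,!}\bk_{U_{D_w\setminus S}}[\ell_w]$ for $S\subset D_w$, the hypothesis is that $\mathcal{F}_{I'}$ is perverse on $X_w$, and we must show that $\mathcal{F}_I$ is perverse. By induction on $|I'\setminus I|$, I may reduce immediately to the case $I'=I\cup\{y_0\}$ for a single divisor $y_0\in D_w\setminus I$.

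The essential idea is to exploit the factorization of the open embedding $j_J$ as $U_J\xrightarrow{\tilde\jmath} U_{J'}\xrightarrow{j_{J'}} X_w$, where $\tilde\jmath$ is the complement of the closed subvariety $Z:=X_{y_0}\cap U_{J'}$ of codimension one in $U_{J'}$. The plan is to show that both $\tilde\jmath$ and $j_{J'}$ are affine open embeddings, so that $\tilde\jmath_!$ and $j_{J',!}$ are each $t$-exact for the perverse $t$-structure. Affineness of $j_{J'}$ follows from the hypothesis via the standard converse to Artin vanishing, applicable here because $\bk_{U_{J'}}[\ell_w]$ is itself perverse on $U_{J'}$ (the latter being an open subvariety along which the shift of the constant sheaf is perverse, as inherited from the perversity of $\mathcal{F}_{I'}$ on $X_w$). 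Affineness of $\tilde\jmath$ follows by exhibiting $Z$ as a Cartier divisor in $U_{J'}$: under assumption $(\star)$, the Schubert divisor $X_{y_0}$ is locally cut out by a single equation on the open subset $U_{J'}$, as encoded by the one-dimensional $\Hom(M_{y_0},M_w)$. Granting these affinenesses, $\bk_{U_J}[\ell_w]$ is perverse on $U_J$ (by further restriction to the open $U_J\subset U_{J'}$), and applying the $t$-exact functor $j_{J,!}=j_{J',!}\circ\tilde\jmath_!$ gives that $\mathcal{F}_I$ is perverse on $X_w$, as desired.

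The main obstacle is precisely these two affineness claims. Showing that perversity of $\mathcal{F}_{I'}$ forces $j_{J'}$ to be an affine open embedding is a delicate converse to Artin vanishing; it is conceivable that a formally weaker statement (e.g., $t$-exactness of $j_{J',!}$ just at the constant sheaf) suffices for our purposes and must be tracked carefully. Showing that $X_{y_0}\cap U_{J'}$ is a Cartier divisor in $U_{J'}$ uses $(\star)$ crucially: the one-dimensional Hom provides, after passing to the complement $U_{J'}$ of the other divisors $X_{y'}$ with $y'\in J'$, a canonical local defining equation for $X_{y_0}$, which is where removing those other divisors is essential to local principality.
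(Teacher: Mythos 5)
Your approach is genuinely different from the paper's, but it rests on two affineness claims that I do not believe can be justified.

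For comparison, the paper's argument avoids affineness entirely. Writing $\mathscr{U}_{I''} := X_w\setminus\bigcup_{y\notin I''}X_y$ for $I''\subset D_w$, one observes $\mathscr{U}_I\cap\mathscr{U}_{I'\setminus I}=\mathscr{U}_\emptyset=C_w$ and invokes the Mayer--Vietoris distinguished triangle
\[
j_!\bk_{\mathscr{U}_\emptyset}[d_w]\rightarrow j_!\bk_{\mathscr{U}_I}[d_w]\oplus j_!\bk_{\mathscr{U}_{I'\setminus I}}[d_w]\rightarrow j_!\bk_{\mathscr{U}_{I'}}[d_w]\xrightarrow{+1}.
\]
The left term is $M_w$, hence perverse; the right term is perverse by hypothesis; and the middle term always lies in nonpositive perverse degrees by right $t$-exactness of $j_!$ for open embeddings. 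The long exact sequence in perverse cohomology then squeezes the middle term into degree zero, so each of its direct summands is perverse; Theorem \ref{topc} converts this back to the statement about $M_I$.

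In your proposal, the step from ``$j_{J',!}\bk_{U_{J'}}[\ell_w]$ is perverse'' to ``$j_{J'}$ is affine'' is the gap you flag, and rightly so: there is no converse to Artin vanishing of this shape. Perversity of the $!$-extension of a single object does not imply affineness, and the weaker fallback you suggest, namely $t$-exactness of $j_{J',!}$ just on the relevant sheaf, is circular: the sheaf you would need $j_{J',!}$ to be exact on is $\tilde\jmath_!\bk_{U_J}[\ell_w]$, whose perversity after pushforward is essentially what you are trying to prove. The second claim is the more serious problem: the assertion that $X_{y_0}\cap U_{J'}$ is a Cartier divisor in $U_{J'}$ does not follow from $(\star)$. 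Condition $(\star)$ is about connectedness of links, i.e.\ $H^0$ of the link, equivalently $\dim\Hom(M_{y_0},M_w)=1$; this says nothing about local principality, which would be controlled by very different data. Weil divisors in singular normal varieties need not be Cartier, and deleting the other boundary divisors does not remedy this. Without both affinenesses the factorization $j_{J,!}=j_{J',!}\circ\tilde\jmath_!$ gives no $t$-exactness, so the argument does not close. I would recommend switching to the Mayer--Vietoris route.
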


The two corollaries produce many new objects admitting BGG resolutions. We show this in the final Section \ref{wex} with the worked example of a regular block in type $A_3$. 

\subsection{Acknowledgments} It is an pleasure to thank Daniel Bump, Ben Elias, Apoorva Khare, Shotaro Makisumi, You Qi, Ben Webster and Zhiwei Yun for helpful discussions. This research was supported by the Department of Defense (DoD) through the NDSEG fellowship.

\section{Preliminaries on highest weight categories} \label{prelim}
 \label{not}

 \subsection{Conventions for highest weight categories}
 
 As the definition of a highest weight category admits many variations, let us begin by pinning down one choice. In brief, we work below with highest weight categories with finitely many simple objects up to isomorphism. For ease of notation, we assume our category is $k$-linear over a field $k$, and endomorphisms of simple objects are scalars. The reader comfortable with the closed-open recollement for highest weight categories is encouraged to skip directly to the next section and refer back only as needed. 
  
The definitions and facts in this section are completely standard, cf. \cite{scott4}. Let $k$ be a field and $\A^\heartsuit$ a $k$-linear abelian category with enough injectives and projectives. Assume that every object of $\A^\heartsuit$ is of finite length, and there are only finitely many isomorphism classes of simple objects $L_w,$ indexed by $w \in \W$. Thick subcategories $\mathscr{C}$ of 
 $\A^\heartsuit$ are naturally in bijection with subsets of $\W$. Each such $\mathscr{C}$ has enough injectives and projectives, as the inclusion $\mathscr{C} \rightarrow \A$ is exact and admits left and right adjoints.  For a partial order $\leqslant$ on $\W$ and $w \in \W$, let $\A_{\leqslant w}^\heartsuit$ denote the thick subcategory corresponding to $\{y \in \W: y \leqslant w \}.$ Write $M_w$, $A_w$ for a fixed projective cover and injective envelope of $L_w$ in $\A_{\leqslant w}^\heartsuit$, respectively. The $M_w$ are called {\em standard} objects, and the $A_w$ {\em costandard} objects.

 \begin{defn} With notation as above, $\A^\heartsuit$ is a {\em highest weight category} with respect to $\leqslant$ if the following hold.
 \label{dhw}
 \begin{enumerate}
 \item For each $w$ in $\W$, one has $\End(M_w) = k$.
 \item If $\Hom(M_y, M_w)$ is nonzero, then $y \leqslant w$.
 \item Write $P_w$ for a projective cover of $L_w$. Then the kernel of the canonical surjection $P_w \rightarrow M_w$ has a filtration with successive quotients of the form $M_x,$ for $x > w$. 
 
 \end{enumerate}

 \end{defn}
 From now on, let $\A^\heartsuit$ be a highest weight category for a partial order $\leqslant$ on $\W$. Let us recall two properties of $\A^\heartsuit$. First, one could equivalently ask for dual properties of the costandard and injective objects rather than those of standard and projective objects listed in Definition \ref{dhw}. Second, one has canonical identifications \begin{equation}\operatorname{RHom}(M_y, A_w) = \begin{cases} k & y = w, \\ 0  & \text{otherwise.} \end{cases} \label{rhomm}\end{equation}Explicitly, the copy of $k$ corresponds to the composition $M_y \rightarrow L_y \rightarrow A_y$. Finally, a filtration of an object is a {\em standard filtration} if the successive quotients are of the form $M_w,$ for $w \in \W$. 

 \subsection{Closed subcategories and open quotient categories.} Let $Z \subset \W$ be a closed subset, i.e. if $w \in Z$ and $y \leqslant w$, then $y \in Z$. 
\label{clopen} 
 Write $\A_Z^\heartsuit$ for the corresponding thick subcategory of $\A^\heartsuit$. Then we have:
 
 \begin{pro} $\A_Z^\heartsuit$ with the restriction of $\leqslant$ to $Z$ is a highest weight category. \label{clos}
 \end{pro}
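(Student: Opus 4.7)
The plan is to verify the three axioms of Definition~\ref{dhw} for $\A_Z^\heartsuit$. For $w \in Z$, closedness of $Z$ yields $\{y : y \leqslant w\} \subset Z$, so the thick subcategory $\A_{\leqslant w}^\heartsuit$ is intrinsically the same inside $\A^\heartsuit$ and inside $\A_Z^\heartsuit$. Consequently the standard and costandard objects $M_w$ and $A_w$ attached to $w \in Z$ do not depend on the ambient category, and axioms (1) and (2) of Definition~\ref{dhw} for $\A_Z^\heartsuit$ follow at once from their validity in $\A^\heartsuit$. All the content is in axiom (3).

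I would construct the projective cover $P_w^Z$ of $L_w$ in $\A_Z^\heartsuit$ as a quotient $P_w/K$, where $K \subset P_w$ is a subobject admitting a standard filtration by $M_y$ with $y \notin Z$. Granting such a $K$: all composition factors of $P_w/K$ are indexed by $Z$, so $P_w/K \in \A_Z^\heartsuit$; any morphism from $K$ into an object $Q \in \A_Z^\heartsuit$ would match composition factors from disjoint subsets of $\W$ and hence vanish, so $\Hom(P_w/K, Q) \simeq \Hom(P_w, Q)$, which is exact in $Q$, and $P_w/K$ is projective in $\A_Z^\heartsuit$. The canonical surjection $P_w \twoheadrightarrow M_w$ factors through $P_w/K$ since $M_w \in \A_Z^\heartsuit$; this shows that $P_w/K$ is nonzero with simple top $L_w$, hence is the projective cover, and its kernel to $M_w$ is filtered by those $M_x$ with $x > w$ and $x \in Z$, verifying axiom (3).

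The crux, and the main obstacle, is producing $K$ by reordering the standard filtration of $P_w$ so that all $M_y$ with $y \notin Z$ accumulate at the bottom. I would proceed by iterated adjacent swaps: a two-step subquotient $M_x \hookrightarrow E \twoheadrightarrow M_y$ in a filtration can be swapped whenever $\Ext^1(M_y, M_x) = 0$, since $E$ then splits and admits the opposite refiltration. The necessary vanishing is that $\Ext^1(M_y, M_x) \neq 0$ forces $y < x$, which I would extract by computing $\Ext^1$ from the short exact sequence $0 \to K_y \to P_y \to M_y \to 0$ of Definition~\ref{dhw}(3), whose $K_y$ is filtered by standards $M_z$ with $z > y$: combined with axiom~(2)'s vanishing $\Hom(M_z, M_x) = 0$ unless $z \leqslant x$, this forces $y < z \leqslant x$ and hence $y<x$. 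Applied to our setting, when $M_x$ with $x \in Z$ sits immediately below $M_y$ with $y \notin Z$ in the filtration, the condition $y \nless x$ holds automatically, for $y < x \in Z$ would force $y \in Z$ by downward closedness of $Z$. An inversion count shows that the swap procedure terminates, furnishing the required $K$ and completing the proof.
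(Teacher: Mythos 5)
Your argument is correct and essentially coincides with the paper's: both proofs hinge on shuffling a standard filtration of $P_w$ using the vanishing $\Ext^1(M_y, M_x) = 0$ unless $y < x$ (proved exactly as you do, from Definition~\ref{dhw}(2),(3)) to split off a bottom piece $K$ filtered by $M_y$ with $y \notin Z$, and then identifying the quotient $C = P_w/K$ (the paper's $i^*P_w$) as the projective cover of $L_w$ in $\A_Z^\heartsuit$. Two tiny repairs to your write-up: the vanishing $\Hom(K,Q)=0$ for $Q \in \A_Z^\heartsuit$ is not because the composition factors of $K$ and $Q$ lie in disjoint subsets of $\W$ (an $M_y$ with $y \notin Z$ typically does contain factors $L_z$ with $z \in Z$), but because any nonzero quotient of such an $M_y$ has cosocle $L_y \notin \A_Z^\heartsuit$; and to see that the kernel of $P_w/K \to M_w$ is filtered by $M_x$ with $x \in Z$, $x > w$, simply run your swap procedure inside $\ker(P_w \to M_w)$, which already contains every standard subquotient indexed outside $Z$ since $w \in Z$ and $M_w$ occurs once at the top.
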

 
 Let us include some explanation. The standard objects are still $M_y$ for $y \in Z$, and similarly for the costandard objects. Write $i^*$ for the left adjoint of the inclusion $i_*: \A_Z^\heartsuit \rightarrow \A^\heartsuit$. Then for $w \in Z$, the object $i^* P_w$ is again projective with cosocle $L_w$, whence the projective cover. One can show that $\RHom(M_x, M_y) = 0$ unless $x \leqslant y$. Specializing to $\Ext^1(M_x, M_y)$, we can shuffle a standard filtration of $P_w$ to obtain \begin{equation} 0 \rightarrow K \rightarrow P_w \rightarrow C \rightarrow 0,\label{ffilt} \end{equation}where $K$ admits a filtration with successive quotients $M_x,$ for $x \notin Z$, and $C$ admits a filtration with successive quotients $M_y,$ for  $y \in Z$. It follows that $i^* P_w \simeq C$.    
 
 Write $U$ for the open complement of $Z$, i.e $U := \W \setminus Z$. Let $\A_U^\heartsuit$ denote the quotient abelian category of $\A^\heartsuit$ by $\A^\heartsuit_Z$, i.e. \begin{equation}\label{abses} 0 \rightarrow \A_Z^\heartsuit \xrightarrow{i_*} \A^\heartsuit \xrightarrow{j^*} \A_U^\heartsuit \rightarrow 0.\end{equation}
 Similarly to Proposition \ref{clos}, we also have:
 
 \begin{pro} $\A_U^\heartsuit$ with the restriction of $\leqslant$ is a highest weight category. 
 \end{pro}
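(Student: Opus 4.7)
My plan is to mirror the proof sketch of Proposition \ref{clos}, with the roles of the adjoints $(i^*, i_*)$ and $(j^*, j_*)$ interchanged. General facts about Serre quotients of finite-length $k$-linear abelian categories give that $\A_U^\heartsuit$ is finite-length with simple objects $\{j^* L_w : w \in U\}$ and inherits enough projectives and injectives from $\A^\heartsuit$. The key observation is that since $Z$ is downward closed, its complement $U$ is upward closed in $\W$; hence for any $w \in U$, the standard filtration of $P_w$, whose factors $M_x$ satisfy $x \geqslant w$, is supported entirely on $U$. Applying the exact functor $j^*$ therefore yields a filtration of $j^* P_w$ by $j^* M_x$ for $x \geqslant w$, which I aim to interpret as a standard filtration of the projective cover of $j^* L_w$ in $\A_U^\heartsuit$.

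To complete the proof, I would first show that $j^* P_w$ is the projective cover of $j^* L_w$ in $\A_U^\heartsuit$, most transparently by passing to a finite-dimensional algebra model in which $\A^\heartsuit \simeq A\Mod$, $\A_U^\heartsuit \simeq eAe\Mod$ for an idempotent $e$, and $j^* P_w$ corresponds to an evidently projective $eAe$-module. Next, I would identify the standard object of $\A_U^\heartsuit$ at $w \in U$ as $j^* M_w$: the composites $\A^\heartsuit_{\leqslant w} \hookrightarrow \A^\heartsuit \xrightarrow{j^*} \A_U^\heartsuit$ and $\A^\heartsuit \xrightarrow{j^*} \A_U^\heartsuit \hookleftarrow (\A_U^\heartsuit)_{\leqslant w}$ share a common right adjoint, so by uniqueness their left adjoints agree, sending $P_w$ on one hand to $j^* M_w$ and on the other to the desired standard $M_w^U$. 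Finally, I would verify the three axioms of Definition \ref{dhw}: axioms (1) and (2) reduce by the adjunction $(j^*, j_*)$ to Hom computations in $\A^\heartsuit$, provided one controls the cokernel of the unit $M_w \to j_* j^* M_w$, which is supported on $Z$ and therefore admits no nonzero map from any $M_y$ with $y \in U$, while axiom (3) is immediate from the filtration of $j^* P_w$ described above.

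The main obstacle will be showing that $j^*$ preserves the projective covers of simples indexed by $U$; once this is established, everything else flows formally from adjunctions, the exactness of $j^*$, and the upward-closedness of $U$.
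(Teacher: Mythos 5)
Your proposal follows essentially the same route as the paper: the paper also establishes that $j^*$ carries $P_w$ to the projective cover of $j^*L_w$ (recorded as the Hom isomorphism $\Hom_{\A_U^\heartsuit}(j^*P_w,j^*S)\simeq\Hom_{\A^\heartsuit}(P_w,S)$, Equation \eqref{shoms}), and then combines this with the observation that, $U$ being upward closed, the standard filtrations of $P_w$ for $w\in U$ are already supported on $U$. Two small loose ends to tidy: the sentence about the two ``composites sharing a common right adjoint'' is garbled as written (the cleaner statement is simply that $j^*$ restricts to a Serre quotient $\A^\heartsuit_{\leqslant w}\to(\A_U^\heartsuit)_{\leqslant w}$ which again preserves projective covers, so $j^*M_w$ is the standard object); and when controlling the unit $M_w\to j_*j^*M_w$ one must handle its kernel as well as its cokernel, which requires the vanishing of $\Ext^1(M_w,-)$, not just $\Hom(M_w,-)$, on objects supported on $Z$ -- but this follows from the same standard fact $\RHom(M_w,L_z)=0$ for $w\in U$, $z\in Z$, so it is a presentation issue rather than a gap.
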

 
 We again include some explanation. For $P_w,$ where  $w \in U$, one can show directly that $j^* P_w$ is the projective cover of $j^* L_w$, and in fact that for any object $S$ of $\A^\heartsuit$, 
 \begin{equation} \Hom_{\A_U^{\heartsuit}}(j^* P_w, j^* S)  \simeq \Hom_{\A^\heartsuit}(P_w,  S). \label{shoms} \end{equation}
 It follows by another application of \eqref{ffilt} that $\A_U^\heartsuit$ is a highest weight category, with $j^* M_y, j^* A_w,$ for $y, w \in U$ the standard and costandard objects.
 
\subsection{Recollement}
The following result of Cline--Parshall--Scott is well known \cite{scott4}. For an abelian category $\mathscr{C}^\heartsuit$, we write $\mathscr{C}$ for its bounded derived category. 

\begin{theo} \label{recc}Let $Z, U$ be as above. Then the sequence \begin{equation} \A_Z \xrightarrow{i_* =: i_!} \A \xrightarrow{j^* =: j^!} \A_U \end{equation}induced from Equation \eqref{abses} may be upgraded to a recollement. In detail:

\begin{enumerate}
 
 \item The functor $i_*$ is fully faithful and admits a left adjoint $i^*$ and a right adjoint $i^!$. 
 \item The functor $j^*$ is fully faithful and admits a left adjoint $j_!$ and a right adjoint $j_*$. 
 \item $j^* i_* = 0$.
 \item The units and counits give distinguished triangles in $\A$
 \begin{equation}j_! j^! \rightarrow \id_\A \rightarrow i_* i^* \xrightarrow{+1},\label{gss}\end{equation}\begin{equation} i_! i^! \rightarrow \id_\A \rightarrow j_* j^* \xrightarrow{+1}. \label{gs}\end{equation}

\end{enumerate}
\end{theo}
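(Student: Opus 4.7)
The plan is to construct the six functors on the bounded derived categories from their abelian-level counterparts and then verify the recollement axioms, with the standard and costandard filtrations of projectives and injectives in a highest weight category providing the essential homological input.

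First, since $i_*$ and $j^*$ are exact functors of abelian categories by the short exact sequence \eqref{abses}, they extend directly to exact functors of bounded derived categories. To prove fully faithfulness of $i_*$, I would show that $\RHom_{\A^\heartsuit}(S, T) \simeq \RHom_{\A_Z^\heartsuit}(S, T)$ for $S, T \in \A_Z^\heartsuit$. Reducing along a projective resolution to the case $S = i^* P_w$ for $w \in Z$, the filtration \eqref{ffilt} presents $P_w$ as an extension $0 \to K \to P_w \to C \to 0$ where $K$ admits a filtration with standard subquotients $M_x$ for $x \notin Z$; since $\Hom_{\A^\heartsuit}(M_x, T) = 0$ for all such $x$, one has $\RHom(K, T) = 0$ and hence $\RHom_{\A^\heartsuit}(P_w, T) = \Hom(C, T) = \Hom_{\A_Z^\heartsuit}(i^* P_w, T)$. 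Iterating along a projective resolution in $\A_Z^\heartsuit$ yields the claim.

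Second, I would construct the remaining four functors as total derived functors of the abelian-level adjoints. The functor $i^*$, being right exact, left derives using projective resolutions in $\A^\heartsuit$, with the explicit formula $i^* P_w = C$ for $w \in Z$ and $i^* P_w = 0$ for $w \in U$ flowing from \eqref{ffilt}. Dually, $i^!$ is the right derived functor of the right abelian adjoint, computed via costandard (equivalently injective) resolutions. For $j_!$, observe that by \eqref{shoms} the assignment $j^* P_w \mapsto P_w$ for $w \in U$ defines a left adjoint to $j^*$ at the level of projective generators, and left deriving via projective resolutions in $\A_U^\heartsuit$ yields $j_!: \A_U \to \A$; the functor $j_*$ is constructed dually. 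The vanishing $j^* i_* = 0$ is immediate at the abelian level and hence at the derived level. For the distinguished triangles, the adjunction unit and counit provide canonical morphisms $j_! j^! \to \id_\A$ and $\id_\A \to i_* i^*$; completing the first to a triangle and applying $j^*$ shows the cofiber lies in the kernel of $j^*$, and fully faithfulness of $i_*$ then identifies it canonically with $i_* i^*$, giving \eqref{gss}, while \eqref{gs} follows by the dual argument.

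The main obstacle is ensuring that each derived adjoint preserves boundedness, so that its image lies in $\A$ rather than in the unbounded derived category of $\A^\heartsuit$. I would address this by proving that $\A^\heartsuit$ has finite global dimension, via induction on $|\W|$: the closed-open decomposition \eqref{abses} combined with the standard filtration of the kernel $K$ in \eqref{ffilt} expresses projective dimension in $\A^\heartsuit$ in terms of those in $\A_Z^\heartsuit$ and $\A_U^\heartsuit$, reducing to highest weight categories on strictly smaller posets and ultimately to a semisimple base case.
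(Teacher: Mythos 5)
Your proof is sound but departs from the paper's argument at two points. For the fully faithfulness of $i_*$, you prove it directly: reduce along a projective resolution in $\A^\heartsuit_Z$ to the generators $i^*P_w$, then use the filtration \eqref{ffilt} together with the vanishing of $\RHom_{\A^\heartsuit}(M_x,T)$ for $x\notin Z$, $T\in\A^\heartsuit_Z$ (which, to be complete, requires not just $\Hom(M_x,T)=0$ but $\Ext^i(M_x,T)=0$ for all $i$; this holds because $M_x$ has a projective resolution by $P_y$ with $y\geqslant x$, hence $y\notin Z$). The paper instead extracts fully faithfulness as a byproduct of its chain-level construction of \eqref{gss}, by observing that $\id_{\A_Z}\to i_*i^*$ is an equivalence on projective covers of the $L_z$, $z\in Z$. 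More consequentially, for the triangles you run the abstract recollement argument: extend the counit $j_!j^!\to\id$ to a triangle, note the cofiber is killed by $j^*$, and identify it with $i_*i^*$. This works, but you should spell out that $\ker(j^*)=\operatorname{essim}(i_*)$ at the derived level (by d\'evissage on cohomology, using exactness of $j^*$) and that $i^*j_!=0$ (the adjoint of $j^*i_*=0$), which is what lets the unit $\id\to i_*i^*$ realize the identification. The paper deliberately avoids this abstract route: it constructs \eqref{gss} at the chain level by applying the filtration $0\to K\to P\to C\to 0$ of \eqref{ffilt} termwise to a bounded complex of projectives, yielding an explicit short exact sequence of complexes that represents the triangle. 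As the paper remarks immediately before its proof, this transparent chain-level form is precisely what is reused in Corollary \ref{sqsq} to produce the filtered complex underlying the spectral sequence; your abstract derivation of \eqref{gss}, while adequate for Theorem \ref{recc} itself, would not directly supply that input. Your explicit attention to boundedness via finite global dimension of $\A^\heartsuit$ is a point the paper leaves implicit.
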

  \noindent It is also likely the following proof is well known, but as it will be convenient to use its transparent chain level form of \eqref{gss} in what follows, we include the argument. 
 
 \begin{proof} For (1), the functor $i^*$ is the left derived functor of the similar functor on abelian categories, and similarly $i^!$ is a right derived functor. The fully faithfulness of $i_*$ we will deduce in the course of proving (4). 
 
 For (2), we first construct $j_*$. Writing $K^b(U)$ for the homotopy category of bounded complexes of injectives in $\A_U^\heartsuit$, it is well known that $K^b(U) \simeq \A_U$. By the dual to Equation \eqref{shoms}, we may equivalently view $K^b(U)$ as the homotopy category of bounded complexes of injectives in $\A$, all of whose indecomposable summands are of the form $I_\mu,$ for $\mu \in U$. This gives us an evident fully faithful embedding $j_*: \A_U \rightarrow \A$. That $j_*$ is right adjoint to $j^*$ again follows from Equation \eqref{shoms}. One constructs $j_!$ dually, using complexes of projectives. 
 
 As (3) is immediate from the definition, it remains to prove (4). We only show Equation \eqref{gss}, as \eqref{gs} is similar. For any projective object $P$ of $\A^\heartsuit$,  we can write down a short exact sequence as in \eqref{ffilt} \begin{equation}0 \rightarrow K \rightarrow P \rightarrow C \rightarrow 0.\label{rec} \end{equation}By induction on the lengths of the filtrations, one sees that $j^* C = i^* K = 0$. The base case for $j^* C$ follows by considering Jordan-H\"older content of $M_w$, and the base case for $i^* K$ may be obtained inductively using the projectivity of $M_w$, for $w$ maximal in $\W$, and Definition \ref{dhw}(3). Combining this vanishing with the long exact sequences in cohomology for $j_!$ and $i^*$, it follows that $K \simeq j_! j^* P$, and $C \simeq i_* i^* I$. In particular, we have $L^{-1} i^* C = 0$. Letting $P$ run over the projective covers of $L_z$ for $z \in Z$, one obtains the unit $$\operatorname{id}_{\A_Z} \rightarrow i_* i^*$$is an equivalence, yielding the fully faithfulness of $i_*$. Finally, as a map $P \rightarrow P'$ of projective objects will preserve the one step filtrations constructed in Equation \eqref{rec}, we may apply this termwise to a complex of projectives and thereby obtain a short exact sequence of complexes, which in particular descends to a distinguished triangle in $\A$.  \end{proof}

\section{Derived resolutions and a spectral sequence} 

Let $\A$, $\W$, and $\ell: \W \rightarrow \mathbb{Z}^{\geqslant 0}$ be as in Section \ref{sresuls}. We begin by proving Theorem \ref{derbgg}, i.e. constructing the derived resolutions. 

\begin{proof}[Proof of Theorem \ref{derbgg}] Recall that $\ell_\circ$ denoted the maximum value of $\ell$. Its preimage $U := \W_{\ell_\circ}$ is an open subset of $\W$ consisting of elements maximal in the partial order, by our assumption on $\ell$. This implies that $j^* M_w \simeq j^* L_w \simeq j^* A_w$, for $w \in U$. By the projectivity of $M_w$, for $w \in U$, it follows that $\A^\heartsuit_U$ is semisimple. Consider the triangle \begin{equation} i_* i^*[-1] \rightarrow j_! j^! \rightarrow \operatorname{id}_\A \xrightarrow{+1}.\label{zer}\end{equation}For any object $N_0$ of $\A$, we have that $j^! N_0 \simeq \bigoplus_{w \in U} j^! M_w \otimes V_w$, for some multiplicity complexes $V_w \in \operatorname{Vect}$. We may identify $V_w$ as $$V_w \simeq \operatorname{RHom}( j^! M_w, j^!N_0) \simeq \operatorname{RHom}(j_! j^! M_w, N_0) \simeq \operatorname{RHom}( M_w, N_0),$$where the last identification follows e.g. from the presentation of $j_!$ discussed in  Theorem \ref{recc}. It follows that the triangle \eqref{zer} canonically identifies with the sequence \begin{equation} N_1 \rightarrow \bigoplus_{w \in \W_{\ell_\circ}} M_w \otimes \operatorname{RHom}(M_w, N_0) \rightarrow N_0 \xrightarrow{+1}, \end{equation}where the last morphism is the natural evaluation map. Writing $Z$ for the complement of $U$, if we apply this construction to $N_1$, viewed as an object of $\A_Z$, with compatible length function given by the restriction of $\ell$, we may iterate to obtain the sequence of distinguished triangles \eqref{theseq}. The identifications of \eqref{findstalks} follow from Equation \eqref{rhomm}. 

It remains to argue for the functoriality and uniqueness. However, given any distinguished triangles \[ i_* c_Z \rightarrow j_! c_U \rightarrow c \xrightarrow{+1}, \quad \quad i_* d_Z \rightarrow j_! d_U \rightarrow d \xrightarrow{+1}, \]any map $c \rightarrow d$ may be extended uniquely to a commutative diagram \[\xymatrix{i_* c_Z \ar[r] \ar[d] & j_! c_U \ar[r] \ar[d] & c \ar[r] \ar[d] & \\ i_* c_Z \ar[r] & j_! c_U \ar[r]  & c \ar[r] &,} \]by a standard argument using the vanishing of $i^* j_!$ and $j^! i_*$. This straightforwardly implies the claimed functoriality and uniqueness. \end{proof}

In the remainder of this section, we construct a related spectral sequence. 

\begin{cor} Let $N$ be an object of $\A$. Then there is a spectral sequence, functorial in $N$, of the form \[E_1^{p, q} = \bigoplus_{w \in \W_{\ell_\circ + p}} M_w \otimes \operatorname{Ext}^{-p-q}(N, A_w)^\vee \Rightarrow H^{p+q}N.\]\end{cor}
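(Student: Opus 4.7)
The plan is to realize the spectral sequence as the one associated to the derived BGG resolution of $N$ given by Theorem \ref{derbgg}.

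First, I would interpret the iterated distinguished triangles of Theorem \ref{derbgg} as exhibiting $N$ as a (bounded) filtered object of $\A$, or equivalently as the totalization of a bicomplex whose $p$-th column is
\[ G^p := \bigoplus_{w \in \W_{\ell_\circ + p}} M_w \otimes V_w, \quad \ell^\circ - \ell_\circ \leqslant p \leqslant 0. \]
Concretely, one sets $F^p N := N_{-p}$, so that each triangle in Theorem \ref{derbgg} takes the form $F^{p-1} N \to G^p \to F^p N \xrightarrow{+1}$; this is the chain-level incarnation of the derived BGG resolution \eqref{ores}.

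Second, I would apply the standard spectral sequence machinery, assembled from the exact couple of long exact sequences in cohomology obtained by applying $H^\bullet(-)$ to each of the above triangles. Convergence is immediate from the boundedness of the filtration, and functoriality in $N$ is inherited from the functoriality of the resolution established in Theorem \ref{derbgg}. This produces a spectral sequence of the form
\[ E_1^{p,q} = H^q(G^p) \Rightarrow H^{p+q} N, \]
where $q$ records the internal cohomological degree within the column $G^p$ and the total degree of an element of $G^p$ at internal degree $q$ is $p+q$.

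Third, I would identify the $E_1$ page. Since $M_w$ lies in $\A^\heartsuit$ and hence is concentrated in degree zero, tensor products with multiplicity complexes commute with cohomology, giving
\[ H^q(G^p) = \bigoplus_{w \in \W_{\ell_\circ + p}} M_w \otimes H^q(V_w). \]
Combining the identification $V_w \simeq (\mathbb{D}\RHom(N, A_w))[\ell_w - \ell_\circ]$ from \eqref{findstalks}, the observation $\ell_w - \ell_\circ = p$ for $w \in \W_{\ell_\circ + p}$, and the basic identity $H^j(\mathbb{D}X) = (H^{-j}X)^\vee$, a short shift calculation gives $H^q(V_w) = \Ext^{-p-q}(N, A_w)^\vee$, which yields the claimed form of $E_1^{p,q}$.

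I anticipate the main obstacle will be organizing the translation from the iterated triangles to a bona fide filtered (or bicomplex) structure in the triangulated setting cleanly enough that the standard spectral sequence formalism applies; once this is done, the identification of the $E_1$ page is a routine bookkeeping of shifts.
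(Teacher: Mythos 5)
Your proposal is correct, and your indexing works out: with $F^pN := N_{-p}$ and $G^p = \bigoplus_{w \in \W_{\ell_\circ+p}} M_w \otimes V_w$, the identification $V_w \simeq (\mathbb{D}\RHom(N,A_w))[\ell_w-\ell_\circ]$ from \eqref{findstalks} does give $H^q(G^p) = \bigoplus_{w} M_w \otimes \Ext^{-p-q}(N,A_w)^\vee$, matching the paper's $E_1$ page. The difference from the paper lies in how the spectral sequence is produced. You work directly with the tower of distinguished triangles from Theorem \ref{derbgg} and take the exact couple of the associated long exact sequences; this is legitimate, requires no chain-level rectification, and inherits functoriality from the functoriality clause of Theorem \ref{derbgg}. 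The paper instead rectifies: it filters $\W$ by the open sets $U_n = \{\ell_\circ - \ell_w \leqslant n\}$, represents $N$ by a complex of projectives, and uses the chain-level form of the recollement triangle from the proof of Theorem \ref{recc} to realize $j_{n!}j_n^!N$ as an honest increasing filtration of that complex by subcomplexes, then quotes the spectral sequence of a filtered complex. The obstacle you flag --- turning the triangles into a bona fide filtered or bicomplex structure --- is exactly what the paper's projective-resolution step resolves, so if you want the filtered-complex formulation you should follow that route; but if you stick with the exact-couple formulation, your first step's "totalization of a bicomplex" language should be dropped or weakened, since the triangles alone do not literally provide a bicomplex, and note also that your $F^pN = N_{-p}$ is not a subobject filtration of $N$ but a shifted quotient tower ($N_k \simeq (N/j_{(k-1)!}j_{(k-1)}^!N)[-k]$), which is why your graded terms $G^p$ differ from the paper's graded pieces by the shift $[p]$ --- harmless, but worth stating. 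One advantage of the paper's chain-level filtration is that it is also what underlies the later identification of the rows of the spectral sequence with constructible cohomology, which the purely triangulated exact-couple construction does not by itself provide.
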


\begin{proof}Consider the open sets $j_n: U_n \rightarrow \W$, where $U_n := \{ w \in \W: \ell_\circ - \ell_w \leqslant n \}.$ Thus, $U_0$ consists of the elements of maximal length, i.e. $\W_{\ell_\circ}$, and as we increase $n$, we obtain a filtration $$U_{0} \subset U_{1} \subset \cdots \subset U_{\ell_\circ - \ell^\circ -1} \subset U_{\ell_\circ - \ell^\circ} = \W.$$This gives rise to a corresponding sequence of endofunctors of $\A$ \begin{equation} j_{0 !} j_{0}^! \rightarrow j_{1!} j_{1}^! \rightarrow \cdots \rightarrow j_{(\ell_\circ - \ell^\circ -1)!} j_{\ell_\circ - \ell^\circ -1}^! \rightarrow j_{(\ell_\circ - \ell^\circ)!} j_{(\ell_\circ - \ell^\circ)}^! = \id. \label{filt} \end{equation} After an appropriate shift the cones of successive arrows are the terms of the derived BGG resolution, i.e. for an object $N$ one has for any integer $p$ a sequence \begin{equation}j_{(p-1)!} j_{(p-1)}^! \rightarrow j_{p!}j^{!}_p \rightarrow \bigoplus_{w \in \W_{\ell_\circ - p}} M_w \otimes \mathbb{D} \operatorname{RHom}( N, A_w) \xrightarrow{+1}.\label{grrr} \end{equation}
If we think of $N$ as a complex of projective objects, the proof of Theorem \ref{recc} shows that the morphisms of Equation \eqref{filt} come from a filtration of the complex \[N_0 \subset N_1 \subset \cdots N_{\ell_\circ - \ell^\circ - 1} \subset N_{\ell_\circ - \ell^\circ} = N.\]The spectral sequence of a filtered complex has first page $$E_1^{p,q} = H^{p+q} N_{-p}/N_{-p-1} \Rightarrow H^{p+q} N,$$whence we are done by \eqref{grrr}. \end{proof}

\begin{re}\label{geo} The results of this section take a transparent form when $\A$ is a highest weight category of perverse sheaves on a suitable stratified space. In this case, for each $w \in \W$ we have a corresponding stratum $C_w$, and the standard object $M_w$ and  costandard objects $A_w$ are the $!$-extension and $*$-extension of the constant perverse sheaf on $C_w$, respectively. 

In favorable situations, e.g. when the strata are all affine spaces $\mathbb{A}^{\ell_w}$, then $\Ext$s between perverse sheaves smooth along the strata coincide with $\Ext$s between their underlying constructible complexes, cf. \cite{tilt}. In particular, for any object $N$ of $\A$, by adjunction $\RHom(N, A_w)$ counts the stalks of $N$ along $C_w$, and $\RHom(M_w, N)$ counts the costalks of $N$ along $C_w$. 

In this setup, the derived BGG resolution can be pictured as follows. The first triangle \[ N_1 \rightarrow \bigoplus_{w \in \W_{\ell_\circ}} M_w \otimes \operatorname{RHom}(M_w, N) \rightarrow N \xrightarrow{+1}\]clears out the stalks of $N$ along top dimensional strata, and the homotopy kernel $N_1$ carries, up to a shift, the same stalks of $N$ along strata of codimension at least one. Similarly, the triangle \[ N_2 \rightarrow \bigoplus_{ w \in \W_{\ell_\circ - 1}} M_w \otimes \operatorname{RHom}(M_w, N_1) \rightarrow N_1 \xrightarrow{+1}\]clears out the stalks of $N_1$ along codimension one strata, and $N_2$ stores the stalks of $N$ along strata of codimension at least two, etc. When one has finally cleared out the stalks of $N$ along the lowest dimensional strata, the homotopy kernel $N_{\ell_\circ + 1}$ has no stalks, i.e. vanishes, and the resolution terminates.\label{int}
\end{re}

\section{Ordinary resolutions}
\label{ordord}
  Let us call a complex of the form introduced in Section \ref{ordres}, i.e. \[0 \rightarrow \bigoplus_{w \in \W_{\ell^\circ}} M_w \otimes V_w \rightarrow \bigoplus_{w \in \W_{\ell^\circ + 1}} M_w \otimes V_w \rightarrow \cdots \rightarrow \bigoplus_{w \in \W_{\ell_\circ}} M_w \otimes V_w \rightarrow 0,\]where $V_w$ are objects of $\operatorname{Vect}^\heartsuit$, and $M_w$ occurs in cohomological degree $\ell_w$, a BGG complex. Unliked the derived resolutions of the preceding section, a BGG complex can be considered as a single object of $\A$. Recall from Section \ref{ordres} that we say an object $N$ of $\A$ admits an ordinary resolution if $N$ is quasi-isomorphic to a shift of a BGG complex. 

 To fix ideas, let us unwind what it means for an object of the abelian category $\A^\heartsuit$ to admit an ordinary resolution.
\begin{pro} Let $N$ be an object of $\A^\heartsuit$, viewed as an object of $\A$ in cohomological degree zero. Then $N$ admits an ordinary resolution if and only if, for some integer $d$,  there exists an exact sequence of the form \begin{equation} \label{naive} 0 \rightarrow \bigoplus_{w \in \W_{\ell^\circ}} M_w \otimes V_w  \rightarrow \bigoplus_{w \in \W_{\ell^\circ + 1}} M_w \otimes V_w \rightarrow \cdots \rightarrow \bigoplus_{w \in \W_d} M_w \otimes V_w \rightarrow  N \rightarrow 0.\end{equation}

\begin{proof} Given a sequence of the form \eqref{naive}, we can think of the last arrow as a quasi-isomorphism from a shifted BGG complex to $N$. On the other hand, suppose there exists a BGG complex $\mathscr{C}$ whose cohomology is $N[-d]$, for some integer $d$. Noting that, for any integer $n$, a map from a sum of standard modules $M_w$ with $\ell_w = n$ to a nonzero sum of standard modules $M_y$ of length $\ell_y = n + 1$ cannot be surjective, it follows that the complex $\mathscr{C}$ has no nonzero terms in degrees greater than $d$. This yields the desired sequence \eqref{naive}.  \end{proof} \end{pro}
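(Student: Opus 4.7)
The plan is to verify the two implications separately, treating the backward direction as essentially formal and focusing the effort on the forward direction, whose essential content is a small but crucial lemma about maps between direct sums of standard modules.

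For the backward direction, given the exact sequence \eqref{naive}, I would simply truncate off the terminal $N$ to produce a complex whose $k$-th term is $\bigoplus_{w \in \W_k} M_w \otimes V_w$, i.e., a BGG complex in the sense of Section \ref{ordres}. The exactness of \eqref{naive} amounts to saying this BGG complex has cohomology concentrated in degree $d$, where it is equal to $N$. In $\A$ the complex is therefore quasi-isomorphic to $N[-d]$, so $N$ is a shift of a BGG complex, exhibiting the desired ordinary resolution.

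For the forward direction, suppose $\mathscr{C}$ is a BGG complex quasi-isomorphic to $N[-d]$ for some integer $d$. The aim is to show that $\mathscr{C}$ has no nonzero terms in cohomological degrees strictly greater than $d$: once this is known, appending $N$ as the cokernel of the last differential immediately yields \eqref{naive}. Let $d'$ be the maximum degree with $\mathscr{C}^{d'} \neq 0$, and suppose for contradiction that $d' > d$. Then $H^{d'}(\mathscr{C}) = 0$, together with the absence of terms in degrees above $d'$, forces the incoming differential $\mathscr{C}^{d'-1} \to \mathscr{C}^{d'}$ to be surjective.

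The crux, and what I expect to be the main obstacle, is the following lemma ruling this out: there is no surjection from a direct sum of standards $M_w$ with $\ell_w = n$ onto a nonzero direct sum of standards $M_y$ with $\ell_y = n+1$. To establish it, I would compose a hypothetical surjection with projection onto a single cosocle $L_y$ in the target; this would exhibit $L_y$ as a composition factor of some $M_w$ with $\ell_w = n$. But the composition factors of $M_w$ lie among the $L_x$ with $x \leq w$, and the compatibility of the length function forces $\ell_x \leq \ell_w = n$, contradicting $\ell_y = n+1$. Applying this lemma to $\mathscr{C}^{d'-1} \to \mathscr{C}^{d'}$ gives the required contradiction and completes the argument.
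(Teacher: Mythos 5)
Your argument is correct and follows the same route as the paper: reduce the forward direction to the observation that no surjection exists from a sum of standards of a fixed length $n$ onto a nonzero sum of standards of length $n+1$. The paper merely asserts this lemma, whereas you supply its (straightforward) proof via the cosocle/composition-factor argument; the substance is otherwise identical.
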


Let us now obtain a characterization of objects admitting ordinary resolutions. Up to taking cohomological shifts, it suffices to characterize objects quasi-isomorphic to a BGG complex. 

\begin{theo} Let $N$ be an object of $\A$. Then $N$ is quasi-isomorphic to a BGG complex if and only if for every $w \in \W$ one has the vanishing \begin{equation}\label{van} \Ext^n(N, A_w) = 0, \quad \quad \forall n \neq -\ell_w.\end{equation} \label{ordresobj}
\end{theo}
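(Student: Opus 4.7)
The forward direction is a direct computation. Given a quasi-isomorphism $N \simeq \mathcal{C}^\bullet$ with $\mathcal{C}^i = \bigoplus_{w \in \W_i} M_w \otimes V_w$ in cohomological degree $i$, apply $\RHom(-, A_w)$ termwise. By \eqref{rhomm}, $\RHom(M_y, A_w) = k \cdot \delta_{y, w}$, so every term of the resulting hom-complex vanishes except the one coming from position $\ell_w$, which contributes $V_w^\vee$ in cohomological degree $-\ell_w$. Since neighbouring terms are zero, no differentials survive, yielding $\Ext^n(N, A_w) = V_w^\vee$ for $n = -\ell_w$ and zero otherwise.

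For the backward direction, my plan is to apply Theorem \ref{derbgg} and then assemble the derived resolution into a chain complex. Under the vanishing hypothesis, $\RHom(N, A_w) \simeq U_w[\ell_w]$ with $U_w := \Ext^{-\ell_w}(N, A_w)$, so the formula \eqref{findstalks} collapses to $V_w \simeq U_w^\vee[-\ell_\circ]$: a vector space in the single cohomological degree $\ell_\circ$, independent of $w$. Each Postnikov layer $T_k = \bigoplus_{w \in \W_{\ell_\circ - k}} M_w \otimes V_w$ is therefore of the form $\widetilde{T}_k[-\ell_\circ]$ with $\widetilde{T}_k := \bigoplus_{w \in \W_{\ell_\circ - k}} M_w \otimes U_w^\vee$ an object of $\A^\heartsuit$. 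Composing the second arrow of the triangle $N_{k+1} \to T_k \to N_k \xrightarrow{+1}$ with the first arrow of the triangle $N_k \to T_{k-1} \to N_{k-1} \xrightarrow{+1}$ yields morphisms $d_k : \widetilde{T}_k \to \widetilde{T}_{k-1}$ in $\A^\heartsuit$ (well-defined since source and target live in the same cohomological degree). Then $d^2 = 0$ is immediate, because the middle composite $N_k \to T_{k-1} \to N_{k-1}$ is the composition of two consecutive arrows of a triangle. Placing $\widetilde{T}_k$ at cohomological position $\ell_\circ - k$, the resulting chain complex $\mathcal{C}^\bullet$ has $M_w$ sitting in position $\ell_w$ for every $w$, which is precisely the BGG form.

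The principal obstacle is to show $\mathcal{C}^\bullet \simeq N$ in $\A$. I would do this by induction on $|\W|$ via recollement. Set $U := \W_{\ell_\circ}$, an open subset since its elements are maximal, and $Z := \W \setminus U$; apply the triangle $i_* i^* N[-1] \to j_! j^! N \to N \xrightarrow{+1}$. The analysis above identifies $j_! j^! N$ with $(\bigoplus_{w \in \W_{\ell_\circ}} M_w \otimes U_w^\vee)[-\ell_\circ]$, i.e., a sum of top-length standards placed in cohomological degree $\ell_\circ$. On the other hand, adjunction with $i_* A_y = A_y$ for $y \in Z$ gives $\Ext^n_{\A_Z}(i^* N, A_y) \simeq \Ext^n_\A(N, A_y)$, which vanishes for $n \neq -\ell_y$, so by the inductive hypothesis $i^* N$ is quasi-isomorphic to a BGG complex $\mathcal{C}^\bullet_Z$ in $\A^\heartsuit_Z$. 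The delicate step is to lift the derived morphism $i_* \mathcal{C}^\bullet_Z[-1] \to j_! j^! N$ to an honest map of chain complexes in $\A^\heartsuit$; its mapping cone is then manifestly a BGG complex, obtained by gluing $\mathcal{C}^\bullet_Z$ at positions $[\ell^\circ, \ell_\circ - 1]$ onto the new top term at position $\ell_\circ$, and is quasi-isomorphic to $N$ by the recollement triangle. For the chain-level lifting I would invoke the explicit presentation of \eqref{gss} constructed in the proof of Theorem \ref{recc}, applied to a projective resolution of $N$; the uniqueness statement of Theorem \ref{derbgg} then identifies the resulting complex with the $\mathcal{C}^\bullet$ built above.
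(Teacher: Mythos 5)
Your forward direction is correct and is essentially the paper's argument. For the backward direction you begin correctly: under the vanishing hypothesis, formula \eqref{findstalks} forces each $V_w$ into the single cohomological degree $\ell_\circ$, and your observation that the composites $T_k \rightarrow N_k \rightarrow T_{k-1}$ square to zero (the middle leg being two consecutive arrows of a triangle) is a sound way to write down candidate differentials. The genuine gap, which you flag yourself, is showing that the Postnikov tower actually assembles into a chain complex quasi-isomorphic to $N$, and the route you propose does not clearly close it. You want to lift the derived morphism $i_* \mathcal{C}^\bullet_Z[-1] \rightarrow j_! j^! N$ to an honest chain map, and propose to do this via the termwise-filtered complex of projectives from the proof of Theorem~\ref{recc}. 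But that construction exhibits $j_! j^! N$ and $i_* i^* N$ as a subcomplex and quotient of a complex of projectives, whereas $\mathcal{C}^\bullet_Z$ supplied by your inductive hypothesis is a complex of standard modules; no comparison between these two chain models is given, so the lift is not produced. Likewise, the ``uniqueness'' in Theorem~\ref{derbgg} is a statement about the multiplicity complexes $V_w$ and the distinguished triangles, not about chain-level lifts, so it does not perform the identification you ask of it.

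The paper closes the gap with a short, self-contained Lemma~\ref{hom} that requires neither recollement nor projective resolutions: given triangles $A_{k+1} \rightarrow B_k \rightarrow A_k \xrightarrow{+1}$ with all $B_k$ in the heart, one inducts on the number of triangles (not on $|\W|$). By induction $A_1$ is a complex concentrated in degrees $\leqslant -1$, so $\Hom_\A(A_1, B_0) \simeq \Hom_\A(\tau^{\geqslant 0}A_1, B_0)$, and $\tau^{\geqslant 0}A_1$ is the cokernel $C$ of the last differential of that complex, an object of $\A^\heartsuit$. Since $\Hom_{\A}(C, B_0) = \Hom_{\A^\heartsuit}(C, B_0)$, the derived morphism $A_1 \rightarrow B_0$ is automatically represented by an honest chain map, and computing the cone finishes the induction. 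This is exactly the lifting your argument needs; replacing your ``delicate step'' with this truncation trick (which your own definition of the $d_k$ essentially anticipates) completes the proof and recovers the paper's.
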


\begin{proof} If $N$ admits an ordinary resolution, we will deduce the concentration condition \eqref{van} from the vanishing \eqref{rhomm}. Indeed, we may assume $N$ is a BGG complex, and as one can compute derived functors using acyclic resolutions, we need only to understand the cohomology of the complex $\Hom(N, A_w)$. As the latter complex is nonzero only in degree $-\ell_w$, we are done. 

Next suppose that $N$ satisfies the vanishing condition \eqref{van}. We will construct an ordinary resolution from the derived resolution. As we proved in Theorem \ref{derbgg}, the multiplicity of the standard object $M_w$ appearing in the derived resolution is the complex$$(\mathbb{D} \RHom(N, A_w))[\ell_w - \ell_\circ],$$which by our assumption on $N$ is concentrated in cohomological degree $\ell_\circ$. We are then done by the following general observation, which we record as a separate lemma.   \end{proof}

\begin{lem} Let $\mathscr{A}^\heartsuit$ be an abelian category, and $\mathscr{A}$ its bounded derived category. Suppose we are given a sequence of distinguished triangles \begin{align*} A_1 \rightarrow &B_0 \rightarrow A \xrightarrow{+1}, \\ A_2 \rightarrow &B_1 \rightarrow A_1 \xrightarrow{+1}, \\ &\hspace{1mm}\vdots \\ A_{n} \rightarrow &B_{n-1} \rightarrow A_{n-1} \xrightarrow{+1}, \\ 0 \rightarrow &B_n \rightarrow A_n \xrightarrow{+1} . \end{align*}If the $B_i$ are all objects of $\mathscr{A}$ in cohomological degree zero, $0 \leqslant i \leqslant n$, then $A$ is quasi-isomorphic to a complex of the form $$\cdots \rightarrow B_2 \rightarrow B_1 \rightarrow B_0,$$where $B_i$ is in cohomological degree $-i$ and the morphisms are the compositions $B_i \rightarrow A_i \rightarrow B_{i-1}$. 
\label{hom}
\end{lem}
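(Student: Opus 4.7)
The plan is to induct on $n$. For $n = 0$, the hypothesis reduces to the single triangle $0 \to B_0 \to A$, which immediately gives $A \simeq B_0$ as the desired one-term complex.

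For the inductive step, I first record the preliminary observation that each $A_j$ lies in $\mathscr{A}^{\leqslant 0}$ for the standard $t$-structure. This is proved by downward induction on $j$: the last triangle gives $A_n \simeq B_n$, and the long exact sequence in cohomology associated to $A_{j+1} \to B_j \to A_j$ propagates the bound. Next, I apply the inductive hypothesis to the tail sequence (the triangles $A_2 \to B_1 \to A_1,\ldots, 0 \to B_n \to A_n$) to obtain a quasi-isomorphism $A_1 \simeq \tilde{C}^\bullet$, where $\tilde{C}^\bullet$ is the complex with $B_i$ placed in degree $-(i-1)$ for $1 \leqslant i \leqslant n$ and with differentials the compositions $B_i \to A_i \to B_{i-1}$.

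The key remaining step is to realize the derived-category morphism $A_1 \to B_0$ coming from the first triangle as an honest chain map $f : \tilde{C}^\bullet \to B_0[0]$. Since $B_0 \in \mathscr{A}^\heartsuit$ sits in degree zero and $A_1 \in \mathscr{A}^{\leqslant 0}$, the $t$-structure formalism yields a canonical identification $\Hom_\mathscr{A}(A_1, B_0) \simeq \Hom_{\mathscr{A}^\heartsuit}(H^0 A_1, B_0)$, sending a morphism to its induced map on $H^0$. The triangle $A_2 \to B_1 \to A_1$ further shows $B_1 = \tilde{C}^0 \twoheadrightarrow H^0 A_1$ is surjective, so $A_1 \to B_0$ lifts to a chain map $f$ concentrated at position zero; tracing through the identifications, its sole nontrivial component $f^0 : B_1 \to B_0$ is exactly the composite $B_1 \to A_1 \to B_0$.

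Finally, the opening triangle identifies $A \simeq \mathrm{cone}(A_1 \to B_0) \simeq \mathrm{cone}(f)$, and the standard chain-level formula for the mapping cone expresses $\mathrm{cone}(f)$ as the complex $\cdots \to B_2 \to B_1 \to B_0$ with $B_i$ placed in degree $-i$ and the prescribed differentials (the new piece being $f^0$, and the deeper differentials inherited from $\tilde{C}^\bullet$ via the inductive hypothesis). The main obstacle is really the third paragraph: bridging from the derived-category morphism $A_1 \to B_0$ to a concrete chain-level representative whose differential is visibly the composition through $A_1$. Once this bridge is built via the $t$-structure, the rest of the induction is routine.
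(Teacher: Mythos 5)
Your proof is correct and follows essentially the same route as the paper: induct on $n$, use the $t$-structure adjunction to identify $\Hom_{\mathscr{A}}(A_1,B_0)$ with maps out of $\tau^{\geqslant 0}A_1 = H^0A_1$ (the cokernel of $B_2\to B_1$), realize the morphism as an honest chain map through the surjection $B_1\twoheadrightarrow H^0A_1$, and finish with the chain-level mapping cone. The only cosmetic differences are that you make the bound $A_j\in\mathscr{A}^{\leqslant 0}$ explicit in advance (the paper reads it off the inductive complex model) and you omit the harmless sign remark on the shifted differentials.
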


\begin{proof} We induct on $n$. Consider the triangle $$A_1 \rightarrow B_0 \rightarrow A_0 \xrightarrow{+1}.$$By inductive hypothesis, we may identify  $A_1$ with a complex of the form $$\cdots \rightarrow B_3 \rightarrow B_2 \rightarrow B_1.$$
Using the usual $t$-structure on $\mathscr{A}$, we have $\Hom(A_1, B_0) \simeq \Hom(\tau^{\geqslant 0} A_1, B_0).$ But the latter is the cokernel $C$ of $B_2 \rightarrow B_1$, concentrated in cohomological degree zero. As $\Hom(C, B_0)$ coincides for $\mathscr{A}^\heartsuit$ and $\mathscr{A}$, we may write $A_1 \rightarrow B_0$ as an honest map of complexes $$\xymatrix{ \cdots \ar[r] &  B_3 \ar[r] & B_2 \ar[r] & B_1 \ar[d] \\ && & B_0.}$$Computing the cone using this model for the morphism finishes the induction. More carefully, this introduces a sign on all the differentials $B_{i+1} \rightarrow B_{i}$ for $i \geqslant 1$, but the resulting complex is isomorphic to the one without any signs. \end{proof}

Corollaries \ref{cor1}, \ref{cor2}, which we repeat here for the reader's convenience, follow immediately from Theorem \ref{ordresobj}. 

\begin{cor} If an object $N$ of $\A$ admits an ordinary BGG resolution, so does any summand of $N$.  
\end{cor}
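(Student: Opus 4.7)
The plan is to deduce this corollary directly from the Ext-vanishing characterization established in Theorem \ref{ordresobj} (equivalently, Corollary \ref{shs}). That characterization says $N$ admits an ordinary BGG resolution iff there exists $m \in \Z$ such that
\[ \Ext^n(N[m], A_w) = 0 \quad \text{for all } w \in \W \text{ and all } n \neq -\ell_w. \]
Since the condition is formulated purely in terms of Ext vanishing against the costandard objects $A_w$, and Ext is additive in its first argument, it should be immediate that the condition is inherited by summands.

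More concretely, suppose $N \simeq N' \oplus N''$ in $\A$, and that $N$ admits an ordinary BGG resolution with shift $[m]$. For each $w \in \W$ and each integer $n$, the functor $\Hom(-, A_w[n])$ sends the direct sum decomposition to a direct sum decomposition
\[ \Ext^n(N[m], A_w) \;\simeq\; \Ext^n(N'[m], A_w) \,\oplus\, \Ext^n(N''[m], A_w). \]
In particular, if the left-hand side vanishes for all $n \neq -\ell_w$, then so does each summand on the right, and hence both $N'[m]$ and $N''[m]$ satisfy the vanishing condition of Theorem \ref{ordresobj}. Applying that theorem in the reverse direction, both $N'$ and $N''$ are quasi-isomorphic to shifts of BGG complexes.

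There is essentially no obstacle here; the entire content of the corollary has been packaged into Theorem \ref{ordresobj}, and what would have been a potentially delicate statement at the chain level (one cannot typically take summands of a complex in the naive termwise sense while staying in the same class of complexes) becomes transparent once translated into the Ext-vanishing criterion. This is precisely the sort of statement the author flagged as ``particularly nonobvious if only stated for objects of $\A^\heartsuit$,'' and the utility of Theorem \ref{eq} is exactly that it makes such structural closure properties manifest.
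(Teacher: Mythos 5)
Your proof is correct and follows exactly the route the paper intends: the paper states that this corollary "follows immediately from Theorem \ref{ordresobj}," and the additivity of $\Ext^n(-,A_w)$ in the first argument is precisely the immediate point. Nothing to add.
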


\begin{cor} Consider a distinguished triangle in $\A$ $$N' \rightarrow N \rightarrow N'' \xrightarrow{+1}.$$If $N'$ and $N''$ are quasi-isomorphic to BGG complexes, then so is $N$. 
\end{cor}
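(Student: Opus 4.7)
The plan is to deduce this corollary directly from the characterization given in Theorem \ref{ordresobj}: an object $N$ of $\A$ is quasi-isomorphic to a BGG complex if and only if $\Ext^n(N, A_w) = 0$ for all $w \in \W$ and all $n \neq -\ell_w$. So it suffices to verify this vanishing condition for $N$, using the corresponding vanishings for $N'$ and $N''$.

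To do this, I would apply the functor $\RHom(-, A_w)$ to the distinguished triangle $N' \rightarrow N \rightarrow N'' \xrightarrow{+1}$, obtaining a distinguished triangle in $\Vect$ and hence the usual long exact sequence
\begin{equation*}
\cdots \rightarrow \Ext^n(N'', A_w) \rightarrow \Ext^n(N, A_w) \rightarrow \Ext^n(N', A_w) \rightarrow \Ext^{n+1}(N'', A_w) \rightarrow \cdots.
\end{equation*}
By hypothesis, both $N'$ and $N''$ satisfy the vanishing of Theorem \ref{ordresobj}. Fix $w \in \W$, and fix an integer $n \neq -\ell_w$. Then $\Ext^n(N'', A_w)$ and $\Ext^n(N', A_w)$ both vanish, which forces $\Ext^n(N, A_w) = 0$ as well. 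Since $w$ and $n$ were arbitrary, $N$ satisfies the vanishing condition of Theorem \ref{ordresobj}, and is therefore quasi-isomorphic to a BGG complex.

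There is essentially no obstacle here; the only point requiring any care is the implicit uniformity of the shift. The statement as given asks both $N'$ and $N''$ to be quasi-isomorphic to (unshifted) BGG complexes, which is exactly what feeds into Theorem \ref{ordresobj} without further bookkeeping. Thus the corollary reduces to the two-out-of-three property for vanishing in a long exact sequence.
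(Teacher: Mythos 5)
Your proof is correct and is exactly the argument the paper intends: it states that this corollary follows immediately from Theorem \ref{ordresobj}, namely by applying $\RHom(-, A_w)$ to the triangle and using the two-out-of-three vanishing in the long exact sequence, just as you wrote.
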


\section{The constructible {$t$}-structure}

Let $\A$ be as before. Our goal in this section is to set up an exotic $t$-structure on $\A$, whose heart will be objects admitting an ordinary resolution. We should say at the outset that this was previously done by Cline--Parshall--Scott \cite{scott2}. However, as we will desire descriptions of the heart and coconnective objects which we were unable to locate in {\em loc. cit.}, we provide a detailed construction.

Recall that a $t$-structure may be reconstructed from its category of coconnective objects. Accordingly, let us define $\A^{\geqslant 0}$ to be the full subcategory of $\A$ with objects $N$ satisfying, for each $w \in \W$, the vanishing condition\begin{equation} \label{geq}\Ext^n(N, A_w) = 0, \quad \quad \forall n > - \ell_w.\end{equation}We will refer to such an object as coconnective.
\begin{re}In the motivating case of perverse sheaves smooth with respect to a stratification by affine spaces $\mathbb{A}^{\ell_w}, w \in \W$, the costandard object $A_w$ is explicitly given by the $*$-extension of $\underline{k}[\ell_w]$, where $\underline{k}$ denotes the constant sheaf. It follows that $\A^{\geqslant 0}$ identifies with objects of the constructible derived category whose stalks all lie in nonnegative cohomological degrees, i.e. the nonnegative part of the usual $t$-structure.   \end{re}

Let us collect some first observations concerning this subcategory. 
\begin{lem}The following properties of $\A^{\geqslant 0}$ hold.  \label{duh}

\begin{enumerate}
    \item If $N$ is coconnective, then so is $N[-1]$.
    \item Given a distinguished triangle $N' \rightarrow N \rightarrow N'' \xrightarrow{+1},$ if $N'$ and $N''$ are coconnective, then so is $N$. 
    \item For any $w \in \W$, the object $M_w[-\ell_w]$ is coconnective. 
\end{enumerate}
\end{lem}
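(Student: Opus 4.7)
The plan is to reduce each of the three assertions to a direct manipulation of the defining vanishing condition, using only shift-invariance of $\Ext$, the long exact sequence associated to a distinguished triangle, and the orthogonality of standards and costandards recorded in \eqref{rhomm}.

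For (1), I would observe that $\Ext^n(N[-1], A_w) = \Ext^{n+1}(N, A_w)$. The vanishing required by coconnectivity of $N[-1]$, namely $\Ext^n(N[-1], A_w) = 0$ for $n > -\ell_w$, is therefore equivalent to $\Ext^m(N, A_w) = 0$ for $m > -\ell_w + 1$, which is strictly weaker than what is guaranteed by coconnectivity of $N$ itself. For (2), I would apply $\RHom(-, A_w)$ to the given triangle, obtaining a long exact sequence in which $\Ext^n(N, A_w)$ sits between $\Ext^n(N'', A_w)$ and $\Ext^n(N', A_w)$. By hypothesis both of these flanking groups vanish for $n > -\ell_w$, so the middle group vanishes as well, which is precisely the condition for $N$ to be coconnective.

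For (3), the goal is to show that for every $y \in \W$ one has $\Ext^n(M_w[-\ell_w], A_y) = 0$ whenever $n > -\ell_y$. Rewriting the left side as $\Ext^{n+\ell_w}(M_w, A_y)$, the key input is \eqref{rhomm}, which tells us that $\RHom(M_w, A_y)$ is zero unless $y = w$, in which case it is $k$ concentrated in degree zero. In the off-diagonal case there is nothing to check, and in the case $y = w$ the unique nonzero $\Ext$ group occurs at $n + \ell_w = 0$, i.e. exactly at $n = -\ell_w = -\ell_y$, which lies on the boundary of the coconnective range rather than strictly above it.

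There is no genuine obstacle here: the whole lemma is a bookkeeping exercise in shifts. The only point where one must be careful is in (3), where it is essential that the single nonzero $\Ext$ group sits at the boundary $n = -\ell_y$ rather than in the strict interior $n > -\ell_y$ — this boundary placement is precisely why the cohomological shift by $-\ell_w$ has been chosen.
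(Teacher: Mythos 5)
Your proof is correct and follows the same route the paper intends: (1) and (2) are the immediate shift and long-exact-sequence bookkeeping, and (3) is exactly the application of \eqref{rhomm}, with the nonzero $\Ext$ sitting precisely at the boundary degree $n=-\ell_w$. Nothing further is needed.
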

\begin{proof} Items (1) and (2) are immediate, and (3) follows from Equation \eqref{rhomm}. \end{proof}

As before, let $Z$ denote a closed subset of $\W$ with open complement $U$, with inclusions $i$ and $j$, respectively. We define $\A_Z^{\geqslant 0}$ to be the full subcategory of $\A_Z$ consisting of objects satisfying  \eqref{geq} for $w \in Z$, and we similarly define $\A_U^{\geqslant 0}$. With this, we have the following:

\begin{lem} The following properties of coconnective objects hold.  \label{duhhh}

\begin{enumerate}
    \item An object $N$ of $\A$ is coconnective if and only if $j^*N$ and $i^*N$ are. 
    \item An object $N_U$ of $\A_U$ is coconnective if and only if $j_! N_U$ is. 
    \item An object $N_Z$ of $\A_Z$ is coconnective if and only if $i_* N_Z$ is. 
\end{enumerate}
\end{lem}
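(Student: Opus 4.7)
My plan is to deduce all three statements from two basic identities for the costandard $A_w \in \A$ under the recollement functors, organized by whether $w \in Z$ or $w \in U$. For $w \in Z$, the costandards of $\A_Z^\heartsuit$ and $\A^\heartsuit$ are identified via $i_*$, and $j^* A_w = 0$ since $j^* i_* = 0$. For $w \in U$, the description in Section \ref{clopen} gives that $j^* A_w$ is the costandard of $\A_U^\heartsuit$; I claim additionally that $i^! A_w = 0$. This is the key input, and I would prove it via \eqref{rhomm}: by adjunction, $\RHom_{\A_Z}(M_y, i^! A_w) = \RHom_\A(M_y, A_w) = 0$ for every $y \in Z$, since $y \neq w$. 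Because the standards $\{M_y : y \in Z\}$ generate $\A_Z$ as a triangulated category---by Proposition \ref{clos}, every projective of $\A_Z^\heartsuit$ admits a standard filtration---this forces $i^! A_w = 0$. Combined with the triangle \eqref{gs} applied to $A_w$, this also yields $A_w = j_* j^* A_w$ for $w \in U$.

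With these identities, parts (2) and (3) are per-$w$ adjunction checks. For (3), $\RHom_\A(i_* N_Z, A_w)$ equals $\RHom_{\A_Z}(N_Z, A_w)$ for $w \in Z$ and vanishes for $w \in U$ by $i^! A_w = 0$, so the coconnectivity conditions on $i_* N_Z$ in $\A$ and on $N_Z$ in $\A_Z$ match term by term. For (2), $\RHom_\A(j_! N_U, A_w) = \RHom_{\A_U}(N_U, j^! A_w)$ vanishes for $w \in Z$ by $j^* A_w = 0$, and equals $\RHom_{\A_U}(N_U, A_w)$ for $w \in U$.

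For (1), the forward direction proceeds similarly: for $w \in Z$, $\RHom_\A(N, A_w) = \RHom_{\A_Z}(i^* N, A_w)$ by adjunction against $i_*$; for $w \in U$, $\RHom_\A(N, A_w) = \RHom_\A(N, j_* j^* A_w) = \RHom_{\A_U}(j^* N, A_w)$. Hence coconnectivity of $N$ propagates to both $i^* N$ and $j^* N$. For the reverse direction, apply the recollement triangle \eqref{gss}, $j_! j^! N \to N \to i_* i^* N \xrightarrow{+1}$; since $j^! = j^*$, the outer terms are coconnective by parts (2) and (3), so $N$ is coconnective by Lemma \ref{duh}(2).

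The main obstacle is the vanishing $i^! A_w = 0$ for $w \in U$; once this is in hand, the remainder is routine six-functor bookkeeping. This is the one place where the specifically highest weight hypotheses---through \eqref{rhomm} and the existence of standard filtrations on projectives---enter in an essential way.
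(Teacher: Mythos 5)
The paper declines to prove this lemma, noting only that ``the proof of Lemma \ref{duhhh} is straightforward,'' so there is no authorial argument to compare against. Your proof is correct and complete.

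The one substantive input is the vanishing $i^! A_w = 0$ for $w \in U$, and you establish it correctly: by the $(i_*, i^!)$ adjunction and Equation \eqref{rhomm} one has $\RHom_{\A_Z}(M_y, i^! A_w) = \RHom_\A(M_y, A_w) = 0$ for every $y \in Z$, and since the standards $\{M_y : y \in Z\}$ generate $\A_Z$ as a triangulated category (projectives of $\A_Z^\heartsuit$ have standard filtrations, and $\A_Z^\heartsuit$ has finite global dimension), this forces $i^! A_w = 0$. The dual identity $j^* A_w = 0$ for $w \in Z$ is immediate from $j^* i_* = 0$. With these two identities, all three parts reduce to per-$w$ adjunction bookkeeping as you describe, and the identification $A_w \simeq j_* j^* A_w$ for $w \in U$ (from the triangle \eqref{gs}) that you use in part (1) is exactly the costandard-side analogue of the presentation of $j_!$ via complexes of projectives used in the proof of Theorem \ref{recc}. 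One small remark: your per-$w$ computation in part (1) already yields the reverse implication directly, since it gives $\Ext^n_\A(N, A_w) \simeq \Ext^n_{\A_U}(j^*N, A_w^U)$ for $w \in U$ and $\Ext^n_\A(N, A_w) \simeq \Ext^n_{\A_Z}(i^*N, A_w^Z)$ for $w \in Z$; the appeal to the recollement triangle \eqref{gss} plus Lemma \ref{duh}(2) is a valid alternative but redundant.
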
 
\noindent The proof of Lemma \ref{duhhh} is straightforward. Less immediately, we also have

\begin{pro}\label{indst} Suppose $U$ consists of a single element $u$ of maximal length. Then an object $N$ of $\A$ is coconnective if and only if $j^* N$ and $i^! N$ are. 
\begin{proof}To show the `only if' implication, we already saw that $j^* N$ is coconnective. To see that $i^! N$ is, consider the distinguished triangle \[i^! j_! j^* N \rightarrow i^! N \rightarrow  i^! i_*i^* N \xrightarrow{+1}.\]By Lemma \ref{duh}, it suffices to show the coconnectivity of the outer two objects. As $i^! i_* i^* N \simeq i^* N$, its coconnectivity is clear. Note that $j_! j^* N$ is of the form $M_u \otimes V_u$, for a vector space $V_u \in \operatorname{Vect}$ concentrated in degrees at least $-\ell_w$. Accordingly, to show the coconnectivity of $i^! j^! j^* N$ it suffices to show the coconnectivity of $i^! M_u [-\ell_u]$, or equivalently that of $i_* i^! M_u [-\ell_u]$ by Lemma \ref{duhhh}. To see this, applying the triangle $$i_* i^! \rightarrow \operatorname{id} \rightarrow j_* j^* \xrightarrow{+1}$$ to $M_u[-\ell_u]$, we obtain a triangle of the form\begin{equation}\label{wts}i_* i^! M_u [-\ell_u] \rightarrow M_u[-\ell_u] \rightarrow A_u[-\ell_u] \xrightarrow{+1}.\end{equation}We claim that $A_u[-\ell_u]$ is coconnective. By definition, we must show for any $w \in \W$ the vanishing \[\Ext^n( A_u, A_w) = 0, \quad \quad \forall n > \ell_u - \ell_w.\]However, it is standard that $A_w$ has injective dimension at most $\ell_u - \ell_w$, as follows from the injectivity of $A_u$ and a descending induction using the dual of Definition \ref{dhw}(3). The coconnectivity of $i_* i^! M_u[-\ell_u]$ now follows by applying $\operatorname{RHom}(-, A_w)$ to the triangle \eqref{wts}, and using the coconnectivity of $A_u$ and Equation \eqref{rhomm}. 

Conversely, suppose that $j^* N$ and $i^! N$ are coconnective, and consider the triangle $$i_* i^! N \rightarrow N \rightarrow j_* j^* N \xrightarrow{+1}.$$By Lemma \ref{duh}, it suffices to show the coconnectivity of the outer two objects. The assertion for $i_* i^! N$ is clear. For the other, by assumption $j_* j^* N$ is of the form $A_u \otimes V$ for a vector space $V \in \operatorname{Vect}$ concentrated in degrees at least $-\ell_u$. As we showed the coconnectivity of $A_u[-\ell_u]$ above, we are done. \end{proof}\end{pro}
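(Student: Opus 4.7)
The plan is to verify both directions by reducing to the key assertion that $A_u[-\ell_u]$ is coconnective, and then stitching things together with the two recollement triangles in \eqref{gss} and \eqref{gs}. The assumption that $U$ is the single maximal element $u$ is what makes $j_!j^*N$ and $j_*j^*N$ expressible in terms of $M_u$ and $A_u$ respectively, turning questions about $j^*N$ into questions purely about these two objects tensored with a multiplicity complex.

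For the ``only if'' direction, coconnectivity of $j^*N$ is already packaged in Lemma \ref{duhhh}(1). The content is showing $i^!N$ is coconnective, for which I would use the triangle
\[ i^! j_! j^* N \rightarrow i^! N \rightarrow i^! i_* i^* N \xrightarrow{+1}, \]
together with Lemma \ref{duh}(2). The right-hand term simplifies to $i^*N$, which is coconnective by Lemma \ref{duhhh}(1). For the left-hand term, since $U = \{u\}$ we can write $j_!j^*N \simeq M_u \otimes V$ with $V$ concentrated in degrees $\geqslant -\ell_u$, so by Lemma \ref{duh}(1),(2) it is enough to prove $i^!M_u[-\ell_u]$ is coconnective, which by Lemma \ref{duhhh}(3) is the same as proving $i_*i^!M_u[-\ell_u]$ is coconnective in $\A$. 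Applying the triangle \eqref{gs} to $M_u[-\ell_u]$ and using that $j^*M_u \simeq j^*A_u$ (both equal the simple in the semisimple quotient $\A_U^\heartsuit$), one obtains
\[ i_*i^!M_u[-\ell_u] \rightarrow M_u[-\ell_u] \rightarrow A_u[-\ell_u] \xrightarrow{+1}, \]
and Lemma \ref{duh}(2) again reduces us to the coconnectivity of $A_u[-\ell_u]$.

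The main technical step, and what I expect to be the only nontrivial input, is the claim that $A_u[-\ell_u]$ is coconnective, i.e.\ $\Ext^n(A_u, A_w) = 0$ for $n > \ell_u - \ell_w$. I would deduce this from the standard bound on the injective dimension of costandards: $A_w$ has injective dimension at most $\ell_u - \ell_w$. This can be shown by descending induction on $\ell_w$ using the dual of Definition \ref{dhw}(3), with the base case $w = u$ being $A_u$ itself, which is the injective envelope of $L_u$ in the whole category (since $u$ is maximal) and hence injective.

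The ``if'' direction is a direct mirror of the argument. Assuming $j^*N$ and $i^!N$ are coconnective, I would apply the triangle $i_*i^!N \rightarrow N \rightarrow j_*j^*N \xrightarrow{+1}$ and verify coconnectivity of the outer terms. The first is handled by Lemma \ref{duhhh}(3). For $j_*j^*N$, the hypothesis on $U$ forces it to be of the form $A_u \otimes V$ with $V$ concentrated in degrees $\geqslant -\ell_u$, and coconnectivity then follows at once from the coconnectivity of $A_u[-\ell_u]$ established above. Finally I would invoke Lemma \ref{duh}(2) to conclude.
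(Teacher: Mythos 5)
Your proposal is correct and follows essentially the same route as the paper's proof: the same two recollement triangles, the same reduction to the coconnectivity of $A_u[-\ell_u]$, and the same injective-dimension bound on costandards proved by descending induction from the injectivity of $A_u$. The only cosmetic point is that in the triangle $i_*i^!M_u[-\ell_u] \rightarrow M_u[-\ell_u] \rightarrow A_u[-\ell_u] \xrightarrow{+1}$ the unknown object sits in the outer, not the middle, position, so before citing Lemma \ref{duh}(2) you should rotate the triangle (using Lemma \ref{duh}(1) and the coconnectivity of $M_u[-\ell_u]$ from Lemma \ref{duh}(3)), or equivalently apply $\RHom(-,A_w)$ and Equation \eqref{rhomm} as the paper does.
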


Recall that $\A^{\geqslant 0}$ defines a $t$-structure if (1) the inclusion $\A^{\geqslant 0} \rightarrow \A$ admits a left adjoint $\tau^{\geqslant 0}$. Moreover, (2) if we write $\A^{< 0}$ for the full subcategory of $\A$ consisting of objects $M$ satisfying $\operatorname{Hom}(M, N) \simeq 0$ for every coconnective object $N$, then the inclusion $\A^{< 0}$ admits a right adjoint $\tau^{ < 0}$, and (3) the tautological counit and unit maps form a distinguished triangle \[\tau^{ < 0} \rightarrow \operatorname{id} \rightarrow \tau^{\geqslant 0} \xrightarrow{+1}.\]We may now show that:

\begin{cor} \label{tstruc}The coconnective objects $\A^{\geqslant 0}$ define a $t$-structure on $\A$. \end{cor}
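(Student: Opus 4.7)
The plan is to induct on $|\W|$, peeling off one maximal-length element at a time and using Proposition \ref{indst} together with Beilinson--Bernstein--Deligne $t$-structure gluing along the recollement of Theorem \ref{recc}.

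For the base case $|\W| = 1$ with unique element $u$, the category $\A$ is equivalent to $\Vect$ and the costandard $A_u$ is the generator in degree $0$, so the coconnective objects are precisely the complexes in degrees $\geqslant -\ell_u$. This is the standard $t$-structure on $\Vect$ shifted by $\ell_u$, which certainly is a $t$-structure, and one observes that in this case $\A^{< 0}$ as defined via $\Hom$-vanishing agrees with the corresponding connective subcategory, and the truncation triangle comes from the usual one on $\Vect$.

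For the inductive step, pick an element $u \in \W$ of maximal length, set $U := \{u\}$ and $Z := \W \setminus U$, with recollement functors $i_*, i^*, i^!, j_!, j^*, j_*$ as in Theorem \ref{recc}. By induction, $\A_Z^{\geqslant 0}$ defines a $t$-structure on $\A_Z$, and as in the base case, $\A_U^{\geqslant 0}$ defines one on $\A_U \simeq \Vect$. Now I would invoke BBD gluing: given these two $t$-structures, the full subcategory of $\A$ consisting of objects $N$ with $j^*N \in \A_U^{\geqslant 0}$ and $i^! N \in \A_Z^{\geqslant 0}$ defines a $t$-structure on $\A$, with truncation triangle built in the standard way from $i_* i^! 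N \rightarrow N \rightarrow j_* j^* N \xrightarrow{+1}$ by first truncating $j^* N$ upward, taking the homotopy fiber to adjust $N$, and then truncating $i^!$ of the result. The compatibility of the recollement $t$-structures required for the gluing (namely $j^* i_* = 0$, left/right adjunctions, and the octahedral argument) holds because we have an honest recollement.

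The key observation tying this glued $t$-structure to our definition is Proposition \ref{indst}: since $U$ consists of a single element of maximal length, an object $N$ of $\A$ is coconnective in our sense if and only if $j^* N$ and $i^! N$ are coconnective in $\A_U$ and $\A_Z$ respectively. Thus the glued $t$-structure has coconnective objects exactly $\A^{\geqslant 0}$, completing the induction. The main obstacle is verifying that our \emph{a priori} definition of $\A^{\geqslant 0}$ (via $\Ext$-vanishing against all costandards) matches the gluing prescription, but this is precisely what Proposition \ref{indst} guarantees; the remaining verifications reduce to Lemma \ref{duh} and Lemma \ref{duhhh}, together with the formal fact that BBD gluing of $t$-structures works in any recollement of triangulated categories.
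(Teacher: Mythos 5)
Your proof is correct and is essentially the paper's own argument: induction on the size of $\W$, with the base case being a shift of the standard $t$-structure on $\Vect$, and the inductive step combining Proposition \ref{indst} with Beilinson--Bernstein--Deligne gluing of $t$-structures along the recollement of Theorem \ref{recc} applied to $Z$ and a single maximal-length element $u$. (One cosmetic slip: in the base case the coconnective objects are the complexes concentrated in cohomological degrees $\geqslant \ell_u$, not $\geqslant -\ell_u$, but either way this is a shift of the standard $t$-structure and the argument is unaffected.)
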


\begin{proof} We proceed by induction on the size of $\W$. When $\W$ is a singleton $u$, under the canonical identification $\A \simeq \operatorname{Vect}$ exchanging $L_u$ and $k$, we are simply obtaining a shift of the standard $t$-structure on $\operatorname{Vect}$. For the inductive step, decompose $\W$ into $Z$ and $u$ as in Proposition \ref{indst}. By our inductive hypothesis, $\A_Z^{\leqslant 0}$ and $\A_U^{\leqslant 0}$ define $t$-structures on $\A_Z$ and $\A_U$. By Proposition \ref{indst}, $\A^{\geqslant 0}$ coincides with the coconnective objects of the gluing of the $t$-structures on $\A_Z$ and $\A_U$, cf. Theorem 1.4 of \cite{bbd}, as desired. \end{proof}

In the remainder of this section, we explicitly identify $\A^{< 0}$ and the heart \[\A^\heartsuit_{con} := \A^{ \leqslant 0} \cap \A^{\geqslant 0}.\]Let us call an object $M$ of $\A^{< 0}$ connective. 

\begin{pro} An object $M$ of $\A$ is connective if and only if for every $w \in \W$, one has the vanishing \begin{equation} \label{conn}\Ext^n( M, A_w) = 0, \quad \quad \forall n \leqslant -\ell_w.\end{equation}

\end{pro}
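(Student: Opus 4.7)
The plan is to prove both directions by induction on $|\W|$, exploiting the gluing presentation of $(\A^{<0}, \A^{\geqslant 0})$ from Proposition \ref{indst}. In the inductive step we pick $u \in \W$ of maximal length and write $\W = Z \sqcup \{u\}$, reducing each question about $\A$ to a parallel question about $\A_Z$ and $\A_U \simeq \Vect$.

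For the forward direction, it suffices to show that $A_w[n]$ is coconnective whenever $n \leqslant -\ell_w$, for then $\Ext^n(M, A_w) = \Hom(M, A_w[n])$ vanishes by the definition of $\A^{<0}$. By Lemma \ref{duh}(1) we may restrict to $n = -\ell_w$. For $w = u$ this is precisely the computation appearing in the proof of Proposition \ref{indst}: we note that $j^*(A_u[-\ell_u]) \simeq k[-\ell_u]$ is coconnective in $\A_U$ (a shift of the standard $t$-structure on $\Vect$), while $i^!A_u = 0$ since $A_u \simeq j_*k$. For $w \in Z$, we write $A_w \simeq i_* A_w$ and apply Lemma \ref{duhhh}(3) to reduce to the inductive hypothesis in $\A_Z$.

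For the backward direction, let $M$ satisfy \eqref{conn}; we want $\Hom(M, N) = 0$ for every coconnective $N$. Using the adjunctions $(i^*, i_*)$ and $(j^*, j_*)$ together with the identifications $i_* A_w \simeq A_w$ for $w \in Z$ and $j_* k \simeq A_u$, the hypothesis \eqref{conn} for $M$ translates into the analogous Ext vanishing for $i^* M$ in $\A_Z$ and for $j^* M$ in $\A_U$. By the inductive hypothesis these restrictions are connective in their respective categories. Meanwhile, Proposition \ref{indst} gives that $i^! N$ and $j^* N$ are coconnective. Applying $\Hom(M, -)$ to the recollement triangle
\[ i_* i^! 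N \to N \to j_* j^* N \xrightarrow{+1}, \]
the two outer terms rewrite as $\Hom(i^* M, i^! N)$ and $\Hom(j^* M, j^* N)$ by adjunction, both of which vanish by the inductive hypothesis. The long exact sequence then forces $\Hom(M, N) = 0$.

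The base case $|\W| = 1$ is a direct check in $\Vect$ with its shifted $t$-structure. The conceptual point---and the main obstacle---is recognizing that, although $\A^{<0}$ is defined as the \emph{global} $\Hom$-orthogonal to all coconnective objects, this orthogonality can nonetheless be tested on the two recollement pieces, and that this test aligns exactly with the coordinate-wise Ext vanishing \eqref{conn} via the adjunctions $(i^*, i_*)$ and $(j^*, j_*)$. Once this is in place, the recursion proceeds essentially automatically.
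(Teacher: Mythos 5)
Your proof is correct and follows essentially the same recollement-plus-induction strategy as the paper; the backward direction in particular is the paper's argument rephrased (the paper factors the map $f$ through $i_*i^!N$ and then through $i_*i^*M$, which is the same information as your vanishing of the two outer $\Hom$ groups). One small point of care in the forward direction: the clause ``reduce to the inductive hypothesis in $\A_Z$'' is slightly loose, since the statement being inducted on is the biconditional characterization of connectivity, which does not by itself assert that $A_w[-\ell_w]$ is coconnective. What you actually need in the inductive hypothesis is the auxiliary claim that $A_w[-\ell_w]$ is coconnective for all $w$, which the paper supplies directly via the injective-dimension bound on $A_w$ in the proof of Proposition \ref{indst}; your invocation of that proof via the criterion ``$j^*$ and $i^!$ coconnective $\Rightarrow$ coconnective'' is logically admissible but redundant, since the criterion's own proof already presupposes the coconnectivity of $A_u[-\ell_u]$. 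Citing the injective-dimension computation directly, or explicitly carrying ``$A_w[-\ell_w]$ is coconnective'' along as part of the induction, would tighten this.
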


\begin{proof}Let us show every connective object $M$ satisfies Equation \eqref{conn}. Indeed, suppose that \eqref{conn} failed to hold for some fixed $y \in \W$. Consider the closed set $i: Z \rightarrow \W$, where $Z = \{ w \in W: w \leqslant y \}$. By assumption, the restriction of $i^* M$ to the open stratum $y$ of $Z$ is not connective, i.e. we have a nontrivial map $$i^* M \rightarrow A_y[-\ell_y - d],$$for some nonnegative integer $d$. It follows that the composite $$M \rightarrow i_* i^* M \rightarrow A_y [-\ell_y - d]$$is nonzero, and the last appearing object is coconnective, as follows from the proofs of Proposition \ref{indst} and Corollary \ref{tstruc}, which contradicts the connectivity of $M$. 

For the converse, suppose $M$ satisfies \eqref{conn}, $N$ is coconnective, and we are given a map $f: M \rightarrow N$. Fixing a maximal element $j: u \rightarrow \W$, by our assumptions the map $j^*f: j^*M \rightarrow j^* N$ is trivial, as there are no cohomological degrees in which the objects of $\operatorname{Vect}$ corresponding to $j^* M$ and $j^* N$ are both nontrivial. By applying $\operatorname{RHom}(M, -)$ to the distinguished triangle \[ i_! i^* N \rightarrow N \rightarrow j_* j^* N \xrightarrow{+1}\]and using the just noted vanishing of $\operatorname{Hom}( M, j_* j^* N)$ and $\operatorname{Hom}(M, j_* j^* N [-1])$, it follows that $f$ factors (uniquely) as $M \rightarrow i_* i^! N \rightarrow N.$ We may further factor this as $$M \rightarrow i_* i^* M \rightarrow i_* i^! N \rightarrow N.$$From the definition, it is straightforward that $i^* M$ is connective, and we saw the coconnectivity of $i^!N$ in Proposition \ref{indst}, hence we are done by induction on the size of $\W$. \end{proof}

As a corollary, we obtain an very explicit description of the heart. 

\begin{cor} \label{esssurj}An object $M$ of $\A$ belongs to $\A^{\heartsuit}_{con}$ if and only if for every $w \in \W$ one has the vanishing $$\Ext^n(M, A_w) = 0, \quad \quad \forall n \neq -\ell_w.$$\end{cor}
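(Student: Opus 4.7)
The plan is to derive the $\Ext$-vanishing characterization of the heart by directly combining the defining condition for $\A^{\geqslant 0}$ with the characterization of connective objects supplied by the immediately preceding Proposition, taking due care with the shift relating $\A^{<0}$ and $\A^{\leqslant 0}$.

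By definition $\A^{\heartsuit}_{con} = \A^{\leqslant 0} \cap \A^{\geqslant 0}$, so two separate vanishing conditions must be combined. First, an object $M$ belongs to $\A^{\geqslant 0}$ precisely when $\Ext^n(M,A_w) = 0$ for every $w \in \W$ and every $n > -\ell_w$; this is literally the condition \eqref{geq} used to define coconnectivity, and it gives the ``$n > -\ell_w$'' half of the desired vanishing.

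For the ``$n < -\ell_w$'' half, I would use the standard $t$-structure identity $\A^{\leqslant 0} = \A^{<0}[-1]$, so that $M \in \A^{\leqslant 0}$ if and only if $M[1]$ is connective. Applying the preceding Proposition to $M[1]$ and using $\Ext^n(M[1],A_w) = \Ext^{n-1}(M,A_w)$, the connectivity of $M[1]$ translates into the vanishing of $\Ext^m(M,A_w)$ for all $m \leqslant -\ell_w - 1$, i.e., for all $m < -\ell_w$.

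Conjoining the two vanishings yields $\Ext^n(M,A_w) = 0$ for all $n \neq -\ell_w$, and the converse is immediate by running the same two characterizations backwards: any $M$ meeting this vanishing lies in both $\A^{\geqslant 0}$ and $\A^{\leqslant 0}$, hence in the heart. I do not anticipate any real obstacle here: once the shift identity $\A^{\leqslant 0} = \A^{<0}[-1]$---a formal property of any $t$-structure---is used to bridge the asymmetry between the Proposition and the definition of the heart, the remainder is direct substitution into the two previously established criteria.
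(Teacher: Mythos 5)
Your argument is correct and is precisely the implicit reasoning behind the paper's unproved "as a corollary": one takes the definition of $\A^{\geqslant 0}$ for the half $n > -\ell_w$, and the preceding Proposition's characterization of $\A^{<0}$ applied to $M[1]$, via $\A^{\leqslant 0} = \A^{<0}[-1]$, for the half $n < -\ell_w$. The shift bookkeeping $\Ext^n(M[1],A_w) = \Ext^{n-1}(M,A_w)$ is handled correctly, so there is nothing to add.
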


In particular, an object lies in the heart if and only if it admits an ordinary resolution. We will upgrade this statement to an equivalence of categories presently. To do so, consider $\operatorname{Ch}(\A^\heartsuit)$, the abelian category of chain complexes with entries in $\A^\heartsuit$, and set $\operatorname{BGG}$ to be the full subcategory of $\operatorname{Ch}(\A^\heartsuit)$ with objects the BGG complexes, cf. Section \ref{ordord}. Note that $\operatorname{BGG}$ is an abelian subcategory of $\operatorname{Ch}(\A^\heartsuit)$, with kernels and cokernels of morphisms agreeing with those computed in the ambient category $\operatorname{Ch}(\A^\heartsuit)$, i.e. computed in each cohomological degree.

\begin{theo}The tautological composition $\operatorname{BGG} \rightarrow \operatorname{Ch}(\A^\heartsuit) \rightarrow \A$ induces an equivalence \[ \operatorname{BGG} \simeq \A^\heartsuit_{con}.\]\end{theo}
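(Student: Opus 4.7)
The proof splits naturally into essential surjectivity and fully faithfulness. First, the functor $F: \operatorname{BGG} \to \A$ lands in $\A^\heartsuit_{con}$: for any BGG complex $C$, a direct computation using $\RHom(M_x, A_y) = k\delta_{x,y}$ shows $\Ext^n(C, A_w) = 0$ for $n \neq -\ell_w$, placing $C$ in the heart by Corollary \ref{esssurj}. Essential surjectivity of $F$ is then immediate: if $N \in \A^\heartsuit_{con}$, Corollary \ref{esssurj} supplies the vanishing $\Ext^n(N, A_w) = 0$ for $n \neq -\ell_w$, and Theorem \ref{ordresobj} produces a BGG complex $C$ with $F(C) \simeq N$.

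For fully faithfulness, my plan is to construct an explicit quasi-inverse $G: \A^\heartsuit_{con} \to \operatorname{BGG}$ using the derived BGG resolution. Given $N \in \A^\heartsuit_{con}$, Theorem \ref{derbgg} produces a sequence of distinguished triangles whose multiplicity complexes are $V_w \simeq \mathbb{D}\RHom(N, A_w)[\ell_w - \ell_\circ]$. Because $N$ lies in the heart, Corollary \ref{esssurj} forces each $V_w$ to be concentrated in a single cohomological degree (namely $\ell_\circ$), common across all $w$. After an overall shift, Lemma \ref{hom} then applies and assembles the derived resolution into an honest BGG complex $G(N)$. The derived resolution is functorial in $N$ by Theorem \ref{derbgg}, and the truncation-based assembly in Lemma \ref{hom} is likewise functorial, making $G$ a well-defined functor.

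One verifies $G$ is quasi-inverse to $F$ as follows. Lemma \ref{hom} yields $F(G(N)) \simeq N$ naturally. Conversely, for a BGG complex $C$, the identity $\RHom(M_x, A_y) = k\delta_{x,y}$ shows that the multiplicity complexes of the derived resolution of $F(C)$ are precisely the $V_w^C$ appearing in $C$, up to the standard shift, so the assembly recovers the chain structure of $C$ and gives $G(F(C)) \simeq C$. The main technical obstacle is confirming that the assembly of Lemma \ref{hom} is strictly functorial at the chain level, not merely up to chain homotopy. Happily, there are no nonzero chain homotopies between BGG complexes: a putative homotopy $h: C^i \to D^{i-1}$ would involve maps $M_w \to M_y$ with $\ell_y = \ell_w - 1$, hence $\ell_y < \ell_w$, which by the length condition forces $\Hom(M_w, M_y) = 0$. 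Consequently any chain-level lift of a derived-category morphism between BGG complexes is canonical, so the assembly is functorial as required, and $F$ and $G$ exhibit the desired equivalence.
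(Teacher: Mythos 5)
Your proof is correct and follows essentially the same route as the paper's: essential surjectivity from Corollary \ref{esssurj} and Theorem \ref{ordresobj}, and fully faithfulness from the functoriality of derived resolutions (Theorem \ref{derbgg}) combined with the iterated coning of Lemma \ref{hom}. The one place you phrase things differently is the uniqueness of the chain-level lift: the paper recovers the component maps $V_w \to V'_w$ by applying $\RHom(-, A_w)$ and using the identification of $V_w$ with $\Ext^{-\ell_w}(M, A_w)^\vee$, whereas you instead note that there are simply no nonzero chain homotopies between BGG complexes, since a homotopy component would be a map $M_w \to M_y$ with $\ell_y < \ell_w$, forbidden by Definition \ref{dhw}(2) and the monotonicity of $\ell$. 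Your observation is a pleasant, slightly cleaner repackaging of the same rigidity, and it is worth recording; note though that it primarily establishes injectivity of $\Hom_{\operatorname{Ch}}(C,D) \to \Hom_{K^b}(C,D)$, and one still needs the functoriality in Theorem \ref{derbgg} to know that derived morphisms lift at all and that the lifted maps assemble compatibly with the differentials -- which both you and the paper invoke at the same level of brevity.
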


\begin{proof}Corollary \ref{esssurj} yields that the map $\operatorname{BGG} \rightarrow \A$ factors through $\A^\heartsuit_{con}$, and is moreover essentially surjective. To see fully faithfulness, let $M$ and $M'$ be two BGG complexes. We must show that any morphism $f: M \rightarrow M'$ in $\A$ corresponds to a unique morphism in $\operatorname{Ch}(\A^\heartsuit)$. 

Write $V_w$, for $w \in \W$, for the multiplicity vector space of $M_w$ appearing in $M$, and similarly define $V'_w$. Notice that a map of chain complexes $M \rightarrow M'$ is determined by the associated morphisms $V_w \rightarrow V'_w,$ for $w \in \W$, induced by $\Hom(M_w \otimes V_w, M_w \otimes V'_w) \simeq \Hom(V_w, V'_w)$. Thus the uniqueness of such a morphism $M \rightarrow M'$ is forced by applying $\RHom(-, A_w)$ to $f$, and using the identification of $V_w, V'_w$ with $\Ext^n(M, A_w)^\vee, \Ext^n(M', A_w)^\vee$, for appropriate $n \in \Z$. 

That such a morphism exists follows from the functoriality of derived resolutions proved in Theorem  \ref{derbgg} and an iterated coning argument similar to that of Lemma \ref{hom}. \end{proof}

Let us conclude this section with a corollary concerning the spectral sequence \eqref{sqsq}. Informally, it says the spectral sequence goes from the constructible cohomology on its first page to the perverse cohomology on its last page.

\begin{cor} For any $N$ in $\A$, the rows of \eqref{sqsq} are the constructible cohomology sheaves of $N$. I.e., the complex $E_1^{*, q}$ is an ordinary resolution of $H_{con}^{\ell_\circ + q}(N)$, where for an integer $i$ we denote by $H_{con}^i$ the constructible cohomology functor \[ [-i]\tau^{\leqslant i} \tau^{\geqslant i}: \A \rightarrow \A^\heartsuit_{con}.\]\end{cor}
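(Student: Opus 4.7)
The plan is to identify each row $E_1^{*,q}$, viewed as a BGG complex via Theorem \ref{eq}, with the constructible cohomology sheaf $H_{con}^{\ell_\circ + q}(N)$.

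First I would observe that each row is already a BGG complex in the sense of Section \ref{ordres}: the term $E_1^{p,q}$ involves only $M_w$ with $\ell_w = \ell_\circ + p$, so placing the $p$-th term in cohomological degree $\ell_\circ + p = \ell_w$ fits the template \eqref{heartcon}. By Theorem \ref{eq} the row thus represents a unique object $X_q \in \A^\heartsuit_{con}$, and it remains to exhibit a natural isomorphism $X_q \simeq H_{con}^{\ell_\circ + q}(N)$. By Corollary \ref{esssurj}, an object of $\A^\heartsuit_{con}$ is detected by its $\Ext$-pairings with the costandards; in particular the multiplicity of $M_w$ in its BGG resolution is, by specializing formula \eqref{findstalks} of Theorem \ref{derbgg}, a canonical shift of $\Ext^{-\ell_w}(-, A_w)^\vee$. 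On the other hand the multiplicity of $M_w$ on the left hand side is $\Ext^{\ell_\circ - \ell_w - q}(N, A_w)^\vee$, read directly off the $E_1$-formula.

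To match the two multiplicities, I would invoke the hypercohomology spectral sequence for $\RHom(-, A_w)$ attached to the truncation triangles $\tau_{con}^{\leqslant i-1} N \to \tau_{con}^{\leqslant i} N \to H_{con}^i(N)[-i] \xrightarrow{+1}$ of the constructible $t$-structure. Because each $H_{con}^i(N)$ lies in $\A^\heartsuit_{con}$, Corollary \ref{esssurj} implies that its contribution to $\Ext^\bullet(N, A_w)$ is concentrated in a single cohomological degree, whence the spectral sequence degenerates at $E_2$. Unwinding this degeneration yields, for every $n$, a canonical isomorphism $\Ext^n(N, A_w) \simeq \Ext^{-\ell_w}(H_{con}^{i(n)}(N), A_w)$ for an index $i(n)$ determined by the shifts in the truncation triangles. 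Setting $n = \ell_\circ - \ell_w - q$ and tracing through the conventions recovers $i(n) = \ell_\circ + q$, matching the required multiplicity.

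Finally, to promote this equality of multiplicity vector spaces to an isomorphism of BGG complexes, I would use the fully faithfulness in Theorem \ref{eq} together with the functoriality of derived resolutions from Theorem \ref{derbgg}: the multiplicity identifications above are natural in $N$ and in $w$, so they piece together into a morphism in $\operatorname{BGG}$ which, being a degreewise isomorphism, is itself an isomorphism. The main obstacle I anticipate is verifying that the $d_1$-differentials of the $E_1$-row, which descend from the filtration $N_k = j_{k!}j_k^! N$ used in the proof of Corollary \ref{sqsq}, coincide with the canonical BGG differentials of the resolution of $H_{con}^{\ell_\circ + q}(N)$. This should follow by tracing through the chain-level form of the closed-open triangle appearing in the proof of Theorem \ref{recc}, but is the only non-formal step of the argument.
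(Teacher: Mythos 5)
The paper states this corollary without proof, so I am assessing your argument on its own terms. Your overall plan — match the $E_1$-row multiplicities to the BGG multiplicities of a constructible cohomology sheaf via the truncation triangles, then upgrade — is reasonable, and the long-exact-sequence (or equivalently the hypercohomology spectral sequence) argument you use to identify $\Ext^n(N,A_w)$ with $\Ext^{-\ell_w}(H^{i(n)}_{con}(N),A_w)$ is exactly the right mechanism. Two points need attention, one minor and one substantial.

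The minor point is the index $i(n)$. You assert ``tracing through the conventions recovers $i(n)=\ell_\circ + q$'' without carrying out the trace. Testing the two-element poset $\W=\{e,w\}$ with $\ell_e=0$, $\ell_w=1$ and $N = L_w$ shows the only nonzero row is $q=0$, and it resolves $H^{-1}_{con}(L_w)$, not $H^{\ell_\circ + q}_{con}(L_w) = H^1_{con}(L_w) = 0$. Carrying out your own bookkeeping gives $i(n)=q-\ell_\circ$ (with the paper's truncation conventions); the sign mismatch is almost certainly a typo in the statement you are trusting, but you should flag rather than reproduce it.

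The substantial point is the ``promotion'' step, which is precisely where the proof lives, and your treatment of it is not correct as written. You say ``the multiplicity identifications above are natural in $N$ and in $w$, so they piece together into a morphism in $\operatorname{BGG}$ which, being a degreewise isomorphism, is itself an isomorphism.'' This does not work: a collection of isomorphisms $V_w \to V_w'$ between multiplicity spaces does \emph{not} determine a morphism of BGG complexes, and two BGG complexes with identical multiplicities need not be isomorphic (replace a nonzero differential $M_e \to M_w$ by the zero map — the resulting objects of $\A^\heartsuit_{con}$ are a nontrivial extension and its split cousin). The fully faithfulness in Theorem \ref{eq} goes in the other direction: it says a morphism in $\A$ determines a morphism of BGG complexes, not that multiplicity data does. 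You do acknowledge the differential-matching as ``the only non-formal step,'' but then give no argument beyond a gesture at the chain-level recollement. As it stands this is a genuine gap.

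A clean way to close it, staying within the paper's toolkit, is to first observe that $j_{n!}j_n^!$ is $t$-exact for the constructible $t$-structure: both directions follow from the adjunction $\Ext^*(j_{n!}j_n^!M, A_w)\simeq \Ext^*(j_n^!M, j_n^!A_w)$ together with the fact that $j_n^!A_w$ is either $0$ (for $w\notin U_n$) or the costandard of $\A_{U_n}$ (for $w\in U_n$), so both vanishing conditions defining $\A^{\geqslant 0}$ and $\A^{<0}$ are preserved. Consequently the filtration $F_n = j_{n!}j_n^!$ commutes with $H^i_{con}$, so $F_\bullet$ restricts to a filtration of $H^i_{con}(N)$ in $\A^\heartsuit_{con}$ with $\operatorname{gr}_n H^i_{con}(N)\simeq H^i_{con}(\operatorname{gr}_n N)$. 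Under the identification of Theorem \ref{eq}, the filtration $F_\bullet$ on an object of $\operatorname{BGG}$ is the evident ``stupid'' filtration by length, whose graded pieces are the terms of the BGG complex itself, with the differentials those of the complex. Comparing this with $H^i_{con}(\operatorname{gr}_n N)$ — computed directly from the displayed $E_1$-terms — matches the row to the BGG complex of $H^{q - \ell_\circ}_{con}(N)$ term-by-term and differential-by-differential, which is what you need.
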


\section{Highest weight modules admitting ordinary resolutions}

Let $X$ be a separated complex variety with a stratification by locally closed subvarieties $C_w,$ indexed by $w \in \W$. Write $X_w$ for the closure of $C_w$ and $d_w$ for its dimension. For a field $k$, write $\bk$ for the corresponding constant sheaf on $X$. Let $D^b(X)$ denote the bounded derived category of sheaves of $\bk$-modules on $X$ in the analytic topology. Let $D^b_\W(X)$ denote the full subcategory of $D^b(X)$ with $\W$-constructible cohomology. I.e.,  an object of $D^b(X)$  lies in $D^b_\W(X)$ if and only if its cohomology sheaves are, after $*$-restriction to any stratum  $C_w$, local systems of $\underline{k}$-modules of finite rank. Finally, set $\A^\heartsuit$ to be the full subcategory of $D^b(X)$ of objects which are perverse and lie in $D^b_\W(X)$. 

Let us now impose some constraints on the stratification which are satisfied in common geometric representation theoretic situations. Namely, let us assume that each stratum $C_w$ is contractible in the analytic topology, and the inclusions $\iota^w: C_w \rightarrow X,$ for $w \in\W,$ are affine morphisms. Then the simple objects of $\A$ are the intersection complexes $$L_w := \iota^w_{!*} \bk[d_w], \quad \quad \text{ for }w \in \W.$$Using the partial order on $\W$ induced by the closure relation on strata, $\A$ is a highest weight category, with standard objects $M_w := \iota^w_! \bk[d_w]$ and costandard objects $ A_w := \iota^w_* \bk[d_w]$. 
Moreover, the inclusion $\A^\heartsuit \rightarrow D^b_\W(X)$ can be prolonged to an equivalence $ \A \simeq D^b_\W(X)$, cf.  
\cite{tilt}.  We will define the compatible length function by $\ell_w = d_w,$ for $w \in \W$. 

The question we study in this section is: when does a highest weight sheaf admit an ordinary resolution? Here by highest weight sheaf we mean a perverse sheaf $M$ in $\A$ admitting a surjection $M_w \twoheadrightarrow M$, for some necessarily unique $w \in \W$. We first determine what shifted sheaf $M$ could possibly be. 

\begin{theo} Fix $w \in \W$, and let $M_w \twoheadrightarrow M$ be a highest weight sheaf. Suppose $(\star)$ for each $y \leqslant w$, where $d_w - d_y \leqslant 1$, that $j_*\bk_{C_y} \simeq \bk_{X_y}$, i.e. the links to strata within $X_w$ and its divisors $X_y$ are connected.  
Then the lowest cohomology sheaf of $M$ is the $!$-extension of the constant sheaf on $X_w$ minus certain boundary divisors $X_{z},$ i.e. $z \leqslant w$, and $d_w - d_z = 1$. 
\label{bs}

\end{theo}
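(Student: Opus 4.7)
The plan is to compute the lowest cohomology sheaf of $M$ by analyzing the kernel of $M_w \twoheadrightarrow M$ in $\A^\heartsuit$ and chasing the associated long exact sequence of honest cohomology sheaves. I would begin by invoking assumption $(\star)$ to understand the structure of maps $M_y \to M_w$ for $y \in D_w$. Via the $(\iota_!, \iota^!)$ adjunction, $\Hom(M_y, M_w)$ identifies with a local cohomology group of the $!$-extension $\iota^w_!\underline{k}_{C_w}$ along $C_y$, and the connected-link hypothesis forces this space to be one-dimensional. It then follows that $M$ is determined by the subset $I \subset D_w$ of divisors along which the quotient map $M_w \to M$ becomes trivial, so $M \simeq M_I := M_w / K_I$ with $K_I$ the image of $\bigoplus_{y \in I} M_y \otimes \Hom(M_y, M_w) \to M_w$. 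Setting $J := D_w \setminus I$, the theorem reduces to showing $\mathcal{H}^{-d_w}M_I \simeq j_!\underline{k}_{U_J}$.

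Next I would consider the distinguished triangle $K_I \to M_w \to M_I \xrightarrow{+1}$ in $\A$ and take the long exact sequence of cohomology sheaves. Since $M_w = \iota^w_!\underline{k}_{C_w}[d_w]$ is concentrated in a single cohomological degree, all the $\mathcal{H}^i M_w$ vanish except for $\mathcal{H}^{-d_w}M_w = \iota^w_!\underline{k}_{C_w}$, and the long exact sequence collapses to
\begin{equation*}
0 \to \iota^w_!\underline{k}_{C_w} \to \mathcal{H}^{-d_w}M_I \to \mathcal{H}^{-d_w+1}K_I \to 0.
\end{equation*}

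The hard part will be computing $\mathcal{H}^{-d_w+1}K_I$ and identifying it with $\iota_!\underline{k}_{U_J \setminus C_w}$, the extension by zero of the constant sheaf from the locally closed piece $U_J \setminus C_w \subset X_w$. The subtlety is that $\bigoplus_{y \in I} M_y \twoheadrightarrow K_I$ is typically \emph{not} injective in $\A^\heartsuit$: the individual maps $M_y \to M_w$ necessarily kill those subobjects of $M_y$ supported on deeper strata, and by $(\star)$ these killed contributions must glue compatibly across neighboring divisors. I would proceed by induction on $|I|$ (or equivalently on a stratified refinement of $\bigcup_{y \in I} X_y$), using the connectedness of links at each step to assemble the contributions of the individual divisors into a single constant sheaf on $U_J \setminus C_w$.

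Finally, I would compare the resulting extension with the standard open--closed short exact sequence of sheaves on $X_w$,
\begin{equation*}
0 \to \iota^w_!\underline{k}_{C_w} \to j_!\underline{k}_{U_J} \to \iota_!\underline{k}_{U_J \setminus C_w} \to 0,
\end{equation*}
associated to the decomposition $U_J = C_w \sqcup (U_J \setminus C_w)$. Matching the two extension classes--for instance by verifying nontriviality of the connecting map at each $y \in I$ using the explicit form of the generator of $\Hom(M_y, M_w)$ supplied by $(\star)$--yields $\mathcal{H}^{-d_w}M_I \simeq j_!\underline{k}_{U_J}$, as desired.
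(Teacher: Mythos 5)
Your opening reduction is where the argument breaks. You claim that an arbitrary highest weight sheaf $M_w \twoheadrightarrow M$ must be isomorphic to $M_I = M_w/K_I$ for some $I \subset D_w$, with $K_I$ the image of $\bigoplus_{y \in I} M_y \otimes \Hom(M_y, M_w) \rightarrow M_w$. This is false: the kernel of $M_w \rightarrow M$ is an arbitrary perverse subobject supported on $\p X_w$ and need not be generated by the images of divisor standard objects. The paper's own type $A_3$ example makes this explicit -- the kernel of $M_{tsut} \rightarrow L_{tsut}$ is not generated by highest weight vectors, so $L_{tsut}$ is a highest weight sheaf which is no $M_I$. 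Theorem \ref{bs} is stated for arbitrary quotients $M$ (this is exactly why its conclusion only asserts ``certain boundary divisors''), so your argument at best addresses the special objects $M_I$, which the paper handles later in Lemma \ref{poles} and Theorem \ref{topc}, not the statement at hand. Moreover, even for $M = M_I$, the step you yourself call the hard part -- identifying $\mathscr{H}^{-d_w+1}K_I$ with $\iota_!\bk_{U_J \setminus C_w}$ and then matching extension classes -- is precisely the content requiring proof, and the sketched induction on $|I|$ does not supply it: $K_I$ is the image of a sum of standard objects whose structure in codimension $\geqslant 2$ is not controlled by $(\star)$, and your exact sequence $0 \rightarrow \iota^w_!\bk_{C_w} \rightarrow \mathscr{H}^{-d_w}M_I \rightarrow \mathscr{H}^{-d_w+1}K_I \rightarrow 0$, while correct, forces you to compute that cokernel and the extension on the nose.

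The paper avoids both difficulties by running the long exact sequence for the other short exact sequence, $0 \rightarrow K \rightarrow M \rightarrow L_w \rightarrow 0$ with $K = \ker(M \rightarrow L_w)$. Since $\cosoc M \simeq L_w$, one has $[K:L_w]=0$, so $K$ is perverse and supported in codimension $\geqslant 1$, whence $\mathscr{H}^{-d_w}K = 0$ and, using $(\star)$ for $y = w$ to get $\mathscr{H}^{-d_w}L_w \simeq \bk_{X_w}$, an exact sequence $0 \rightarrow \mathscr{H}^{-d_w}M \rightarrow \bk_{X_w} \rightarrow \mathscr{H}^{-d_w+1}K$. One then needs only an upper bound on $\mathscr{H}^{-d_w+1}K$: from the triangle $i_!i^!K \rightarrow K \rightarrow j_*j^*K \xrightarrow{+1}$ over $\p X_w$, the codimension-two support and perverse nonnegativity of $i_!i^!K$, the contractibility of the divisor cells, and $(\star)$ for the divisors, one gets an injection $\mathscr{H}^{-d_w+1}K \hookrightarrow \bigoplus_y \bk_{X_y}\otimes V_y$. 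Hence $\mathscr{H}^{-d_w}M$ is the kernel of a map $\bk_{X_w} \rightarrow \bigoplus_y \bk_{X_y}\otimes V_y$, which is automatically the $!$-extension of the constant sheaf off a union of divisors; no description of $\ker(M_w \rightarrow M)$ and no extension-class bookkeeping is needed. To salvage your route you would have to both drop the $M_I$ reduction and genuinely carry out the computation of $\mathscr{H}^{-d_w+1}$ of the kernel; the cosocle sequence is the efficient way around both obstacles.
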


\begin{proof} From the surjection $M_w \twoheadrightarrow M$, we obtain that $\cosoc M \simeq \cosoc M_w = L_w$. Consider the corresponding short exact sequence $$0 \rightarrow K \rightarrow M \rightarrow L_w \rightarrow 0.$$As $[K:L_w] = 0$, it follows that $K$ is supported in codimension at least one. Taking constructible cohomology, we obtain by $(\star)$ an exact sequence \begin{equation} \label{uno}0 \rightarrow \mathscr{H}^{-d_w} M \rightarrow \bk_{X_w} \rightarrow \mathscr{H}^{-d_w + 1} K.\end{equation}We next look at $K$, a perverse sheaf supported on $X_w \setminus C_w =: \p X_w$. Let us decompose  $\p X_w = U \sqcup Z$, where $j: U \rightarrow \p X_w$ is the disjoint union of the open divisor strata $C_y,$ where $y \leqslant w$, and $d_w - d_y = 1$. Let us look at the corresponding standard distinguished triangle \begin{equation} \label{dos} i_! i^! K \rightarrow K \rightarrow j_* j^* K \xrightarrow{+1}.\end{equation} The closed set term $i_! i^! K$ is supported in codimension two and lies in nonnegative perverse degrees. The open restriction $j^* K$ must be, by perversity and constructibility, a sum $$j^* K \simeq \bigoplus_{\substack{y \leqslant w: \\ d_w - d_y = 1}} \bk_{C_y} \otimes V_y[d_y],$$for some multiplicity spaces $V_y$. By $(\star)$, it then follows that $$\mathscr{H}^{-d_w + 1} j_* j^* K \simeq \bigoplus_{\substack{y \leqslant w: \\ d_w - d_y = 1}}  \bk_{X_y} \otimes V_y.$$Combining these analyses, the bottom of the long exact sequence in cohomology sheaves for \eqref{dos} reads as \begin{equation} \label{tres} 0 \rightarrow \mathscr{H}^{-d_w + 1} K \rightarrow \bigoplus_{\substack{y \leqslant w: \\ d_w - d_y = 1}} \bk_{X_y} \otimes V_y.\end{equation}Combining \eqref{uno} and \eqref{tres} gives an exact sequence \begin{equation} 0 \rightarrow \mathscr{H}^{-d_w} M \rightarrow \bk_{X_w} \rightarrow \bigoplus_{\substack{y \leqslant w: \\ d_w - d_y =1}} \bk_{X_y} \otimes V_y,\end{equation}from which the desired claim follows. \end{proof}

The analysis so far uses only the zeroth step of the resolution $M_w \twoheadrightarrow M$. We next study the possible first steps of the resolution, i.e. we need to understand $\Hom(M_y, M_w)$, for $d_w - d_y = 1$. 

\begin{lem} Suppose that $y \leqslant w, d_w - d_y = 1$. If $j_* \bk_{C_w} \simeq \bk_{X_w}$, then $\Hom(M_y, M_w)$ is one dimensional.  \label{homs}
\end{lem}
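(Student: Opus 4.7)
I would compute $\Hom(M_y, M_w)$ by reducing, via Verdier duality and adjunction, to a local topological calculation on the stratum $C_y$. First, since the $C_\bullet$ are smooth affine cells, one has $\mathbb{D} M_\bullet \simeq A_\bullet$, whence $\Hom(M_y, M_w) \simeq \Hom(A_w, A_y)$. Using the $(\iota^{y,*}, \iota^y_*)$ adjunction and the contractibility of $C_y$, the problem reduces to computing the stalk of $\iota^{y,*}A_w$ at any $p \in C_y$, which is $R\Gamma(B \cap C_w, \bk)[d_w]$ for a small analytic neighborhood $B$ of $p$ in $X_w$.

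By the perversity of $A_w$, this stalk is cohomologically bounded above by $-d_y$; since $R\Gamma$ lies in non-negative degrees, its cohomology is concentrated in the two degrees $-d_w$ and $-d_y = -d_w + 1$, namely $H^0(B \cap C_w)$ and $H^1(B \cap C_w)$. I would then argue that under $(\star)$, both groups are one-dimensional: the space $B \cap C_w$ has the homotopy type of the link of $p$ in the one-complex-dimensional transverse slice of $X_w$ to $C_y$, and such a link is classically a disjoint union of circles -- one per irreducible branch of the germ. The connectedness asserted by $(\star)$ therefore forces the transverse germ to be irreducible, hence its link to be $S^1$, giving $H^0 = H^1 = \bk$.

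Consequently $\iota^{y,*}A_w \simeq \bk_{C_y}[d_w] \oplus \bk_{C_y}[d_y]$, and the desired Hom evaluates to $\Ext^{-1}_{C_y}(\bk, \bk) \oplus \End_{C_y}(\bk) = \bk$. The main obstacle is the topological step: extracting local irreducibility of the one-dimensional transverse slice from the connectedness assertion $(\star)$, and verifying that $B \cap C_w$ has the asserted homotopy type at a generic $p \in C_y$ (for which one can invoke the local conic structure of complex analytic sets).
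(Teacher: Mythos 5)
Your argument is correct and follows essentially the same route as the paper: reduce via Verdier duality to $\Hom(A_w,A_y)$, then by adjunction to a local computation where the link of $C_y$ in $X_w$ is a disjoint union of circles, and the hypothesis $j_*\bk_{C_w}\simeq \bk_{X_w}$ (connectedness of the link) forces exactly one circle, giving a one-dimensional Hom. The extra steps you flag as obstacles (perversity bounds on the stalk, conic structure, one circle per branch) are exactly the standard facts the paper invokes implicitly, and no irreducibility beyond connectedness of the punctured slice is actually needed.
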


\begin{proof} Write $\mathbb{D}$ for the Verdier duality, which preserves $\A$. Since  $C_w,C_y$ are smooth, we have $$\RHom(M_y, M_w) \simeq \RHom(\mathbb{D} M_w, \mathbb{D}M_y) \simeq \RHom(A_w, A_y).$$By adjunction, $\Hom(A_w, A_y) \simeq \Hom( H^1(L, k), k)$,  where $L$ is the link of $C_y$ in $X_w$. This is a compact one-dimensional manifold, i.e. a disjoint union of circles. As $j_* \bk_{C_w} \simeq \bk_{X_w}$, it is precisely one circle, as desired.  \end{proof}

The preceding lemma affords us a parametrization of possible highest weight objects $M_w \twoheadrightarrow M$ admitting ordinary resolutions. Namely, write $D_w = \{ y \leqslant w: d_w - d_y = 1\}$. For a subset $I \subset D_w$, consider the object $M_I$ obtained via the exact sequence $$ \bigoplus_{ y \in I} M_y \otimes \Hom(M_y, M_w) \rightarrow M_w \rightarrow M_I \rightarrow 0. $$It is clear any $M_w \twoheadrightarrow M$ admitting an ordinary resolution must be isomorphic to $M_I$ for some $I$. Indeed, to compute the zeroth term of our putative BGG complex, we use the injection $\Hom(M, A_x) \rightarrow \Hom(M_w, A_x),$ for $x \in \W$, to conlude the first term must be $M_w$ with multiplicity one. $M$ would then be determined as the cokernel of the next step in the resolution, i.e. must be of the form $M_I$ for some $I$. That these are all distinct is a consequence of the following lemma, which describes the $M_I$ modulo behavior in codimension two. 

\begin{lem} For any $I \subset D_w$, consider the open subvariety of $X_w$ given by$$U_I := C_w \sqcup \bigsqcup_{y \in I} C_y.$$If $j_* \bk_{C_w} \simeq \bk_{X_w}$, then the restriction of $M_I$ to $U_{D_w}$ is the $!$-extension of the constant sheaf on $U_{D_w \setminus I}$. \label{poles}
\end{lem}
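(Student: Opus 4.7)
My plan is to compute $M_I|_V$, where $V := U_{D_w}$, stratum-by-stratum, showing it is concentrated in cohomological degree $-d_w$ as a complex of sheaves with underlying sheaf the extension by zero from the open $U := C_w \sqcup \bigsqcup_{y \in I} C_y \subset V$. (This open $U$ agrees with the statement's $U_{D_w \setminus I}$ once one unfolds the notation: it is precisely the complement in $V$ of the closed $\bigsqcup_{y \in D_w \setminus I} C_y$.) The first preparatory observation is that, since $\iota^w : C_w \to X$ is a locally closed embedding, the complex $M_w = \iota^w_! \bk[d_w]$ is concentrated in a single cohomological degree---namely the sheaf-theoretic extension by zero of $\bk_{C_w}$ placed in degree $-d_w$---and in particular $(i_{y_0})^* M_w = 0$ for every divisor $y_0 \in D_w$. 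For each such $y \in D_w$, the inclusion $C_y \hookrightarrow V$ is closed, so $M_y|_V = (i_y)_* \bk[d_y]$ is the constant sheaf on $C_y$ placed in degree $-d_w + 1$, and on $V$ coincides with the simple perverse sheaf $L_y$.

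Next I would lift the defining sequence to a distinguished triangle
\[
\bigoplus_{y \in I} M_y|_V \otimes \Hom(M_y, M_w) \longrightarrow M_w|_V \longrightarrow M_I|_V \xrightarrow{+1}
\]
in $D^b(V)$. The source is a direct sum of pairwise non-isomorphic simple perverse sheaves on $V$, and each component $\phi_y : M_y \to M_w$ is nonzero by Lemma \ref{homs}, so the map is perverse-injective and its cone coincides with the perverse cokernel $M_I|_V$. Applying $*$-restrictions to this triangle then yields the $*$-stalks of $M_I|_V$ at each stratum. On the open $C_w$ the direct sum vanishes, giving $j^* M_I|_V \simeq \bk_{C_w}[d_w]$. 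On a divisor $C_{y_0}$ with $y_0 \in D_w \setminus I$ every term of the triangle vanishes, forcing $(i_{y_0})^* M_I|_V \simeq 0$. On a divisor $C_{y_0}$ with $y_0 \in I$, only the summand at $y_0$ contributes and gives $\bk[d_w - 1]$, while $(i_{y_0})^* M_w = 0$; the resulting rotated triangle $\bk[d_w-1] \to 0 \to (i_{y_0})^* M_I|_V \xrightarrow{+1}$ then produces $(i_{y_0})^* M_I|_V \simeq \bk[d_w]$.

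These calculations show that the $*$-stalks of $M_I|_V$ are concentrated in cohomological degree $-d_w$ at every point, so $M_I|_V$ is itself a sheaf in that single degree. Its underlying sheaf has $*$-restriction $\bk_U$ on the open $U$ and vanishes on the closed complement $\bigsqcup_{y \in D_w \setminus I} C_y$; by the sheaf-level recollement $j_! j^* \to \id \to i_* i^* \xrightarrow{+1}$, any such sheaf is canonically its own $j_!$-extension from $U$, giving $M_I|_V \simeq j_! \bk_U[d_w]$, which is the desired identification. The main obstacle I expect is the stalk computation on $C_{y_0}$ for $y_0 \in I$: the surviving term $(i_{y_0})^*(M_{y_0} \otimes \Hom(M_{y_0}, M_w)) \simeq \bk[d_w-1]$ a priori sits one degree below $-d_w$, and it is the connecting morphism of the distinguished triangle---not a direct stalk computation---that shifts it up to the expected degree $-d_w$.
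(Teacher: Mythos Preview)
Your stalk-by-stalk computation via the distinguished triangle is correct, and the conclusion that $M_I|_V$ is a sheaf concentrated in degree $-d_w$, vanishing on the strata $C_y$ with $y \notin I$, is valid. However, there is a real gap in the final identification. You assert that the underlying sheaf ``has $*$-restriction $\bk_U$ on the open $U$'', but knowing the $*$-restriction of a constructible sheaf to each stratum does \emph{not} determine the sheaf: already on a single $U_y = C_w \cup C_y$, both $\bk_{U_y}$ and $j_!\bk_{C_w} \oplus i_{y*}\bk_{C_y}$ have the same $*$-restrictions to $C_w$ and $C_y$, yet are not isomorphic. What remains is precisely to show that the sheaf is \emph{constant} on $U$, not merely stratum-wise constant. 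This is where the hypothesis $j_*\bk_{C_w}\simeq\bk_{X_w}$ must enter directly, and your argument (which invokes it only through the one-dimensionality of $\Hom(M_y,M_w)$ from Lemma~\ref{homs}) does not supply this step.

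The paper's proof handles this point by a Jordan--H\"older argument rather than stalks: from $[M_w:L_y]=1$ it follows that on each $U_y$ the perverse sheaf $M_I|_{U_y}$ is one of the two quotients of the length-two object $M_w|_{U_y}$, namely $M_w|_{U_y}$ or $L_w|_{U_y}$; the hypothesis is then used to identify $L_w|_{U_{D_w}}$ with the shifted constant sheaf $\bk[d_w]$. Your approach can be repaired along the same lines: your perverse-injectivity argument already shows $M_I|_{U_I}$ has $L_w|_{U_I}$ as its unique composition factor, hence $M_I|_{U_I}=L_w|_{U_I}$, and then the hypothesis gives $L_w|_{U_I}=\bk_{U_I}[d_w]$.

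A separate notational point: your $U = C_w \sqcup \bigsqcup_{y\in I} C_y$ is $U_I$ in the lemma's notation, \emph{not} $U_{D_w\setminus I}$. Unfolding the definition gives $U_{D_w\setminus I} = C_w \sqcup \bigsqcup_{y\in D_w\setminus I} C_y$, the opposite union. Your computed answer (the $!$-extension from $U_I$) is the one consistent with how the lemma is applied in Theorem~\ref{topc}; the $U_{D_w\setminus I}$ appearing in the lemma's statement is evidently a slip.
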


\begin{proof} First note that Lemma \ref{homs} implies that $[M_w: L_y] = 1$, for all $y \in D_w$. Indeed, consider the short exact sequence $$0 \rightarrow K \rightarrow M_w \rightarrow L_w \rightarrow 0.$$We have $\Hom(M_y, M_w) \simeq \Hom(M_y, K)$. As $K$ is supported in codimension one, where $y$ is maximal, $\dim \Hom(M_y, K) = [K: L_y]$. By considering Jordan-H\"older content, it follows that the restriction of $M_I$ to each $U_y, y \in D_y$, is isomorphic to the restriction of either $L_w$ or $M_w$. By the assumption on $j_* \bk_{C_w}$, the restriction of $L_w$ to $U_{D_y}$ is $\bk[d_w]$, and the claim follows easily. \end{proof}

\begin{re} Informally, the perverse sheaf $M_w$ has poles along the boundary divisors of $X_w$. For $M_I$, the addition of each $y \in I$ removes the pole along the divisor $X_y$. \end{re}

Combining the previous results in this section, we obtain:

\begin{theo} Fix $w \in \W$, and suppose the connectivity condition $(\star)$ of Theorem \ref{bs} holds. For $I \subset D_w$, consider the open embedding $j: X_w \setminus \cup_{y \notin I} X_y \rightarrow X_w$. The following are equivalent:

\begin{enumerate}
\item $M_I$ admits an ordinary resolution.

\item The sheaf $j_! \bk[d_w]$ is perverse.

\item There is an isomorphism of constructible complexes $M_I \simeq j_! \bk[d_w]$. 

\end{enumerate}
\label{topc}

\end{theo}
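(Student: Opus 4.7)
I plan to combine Theorems~\ref{eq} and~\ref{bs} with a direct analysis of canonical morphisms between $M_I$ and the putative resolution $Z := j_! \bk_{U_J}[d_w]$, where $U_J := X_w \setminus \bigcup_{y \notin I} X_y$. The implication $(3) \Rightarrow (2)$ is immediate since $M_I$ is perverse, and $(3) \Rightarrow (1)$ holds because $j_!\bk_{U_J}$ is an honest constructible sheaf on $X$; under the identification of $\A^\heartsuit_{con}$ with the standard heart of $D^b_\W(X)$, the object $Z$ is therefore a shift of an element of $\A^\heartsuit_{con}$ and admits an ordinary BGG resolution by Theorem~\ref{eq}. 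For $(1) \Rightarrow (3)$, Theorem~\ref{eq} places $M_I$ in some shift $\A^\heartsuit_{con}[m]$; the perverse stalk $\bk[d_w]$ at the top stratum $C_w$ pins $m = d_w$, so $M_I$ is concentrated in cohomological degree $-d_w$ as a constructible complex, and Theorem~\ref{bs} identifies its unique nonzero cohomology sheaf with $j_!\bk_{U_J}$.

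The main step is $(2) \Rightarrow (3)$. I would first factor $\iota^w$ as $C_w \xrightarrow{k} U_J \xrightarrow{j} X_w \hookrightarrow X$, and build from the unit $k_! \bk_{C_w} \to \bk_{U_J}$ a canonical morphism $\beta_0 : M_w \to Z$ of constructible complexes, which under (2) is a morphism in $\A^\heartsuit$. The hypothesis $(\star)$ ensures that each link of $C_y$ in $X_w$ is a single $S^1$, so $X_w$ is smooth in complex codimension one along $C_y$; this yields $(\iota^y_{U_J})^! \bk_{U_J} \simeq \bk_{C_y}[-2]$ and hence
\[
\Hom(M_y, Z) \simeq \Hom(\bk[d_y], \bk_{C_y}[d_w - 2]) \simeq H^{-1}(C_y, \bk) = 0 \qquad \text{for } y \in I.
\]
Consequently $\beta_0$ annihilates the image of $\bigoplus_{y \in I} M_y \otimes \Hom(M_y, M_w)$ in $M_w$ and factors through a map $\beta : M_I \to Z$ in $\A^\heartsuit$. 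Dually, Theorem~\ref{bs} gives $\cH^{-d_w}(M_I) \simeq j_!\bk_{U_J}$, so under (2) the truncation $\tau^{\leqslant -d_w} M_I \to M_I$ furnishes a map $\alpha : Z \to M_I$ in $\A^\heartsuit$.

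To conclude, I would show $\alpha$ and $\beta$ are mutually inverse. The same adjunction computations, combined with the defining short exact sequence of $M_I$ and the connectedness of $U_J$, yield that $\Hom(M_w, Z)$, $\Hom(M_I, Z)$, and $\End(Z)$ are all one-dimensional, while by construction both $\alpha$ and $\beta$ restrict to the identity on $C_w$. Hence $\beta \circ \alpha \in \End(Z)$ is a nonzero scalar, making $\alpha$ a split monomorphism and $\beta$ a split epimorphism, so $M_I \simeq \alpha(Z) \oplus \ker(\beta)$ in $\A^\heartsuit$. Since $M_I$ is a quotient of $M_w$ its cosocle is the simple object $L_w$, already realized as the cosocle of the summand $\alpha(Z)$; this forces $\ker(\beta) = 0$ and $\beta$ to be an isomorphism. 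The principal obstacle I expect is the Hom vanishing $\Hom(M_y, Z) = 0$, as the full strength of $(\star)$ enters only there, through the reduction of the local cohomology along each codimension-one stratum to the familiar smooth model.
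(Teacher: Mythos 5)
Your treatment of the easy implications is fine, and your main step $(2)\Rightarrow(3)$ is genuinely different from the paper's route: the paper instead shows $(2)\Rightarrow(1)$ by observing that $j_!\bk[d_w]$ satisfies the Ext-vanishing criterion of Theorem \ref{ordresobj}, that under (2) its ordinary resolution takes the abelian-category form \eqref{naive}, and that reading the zeroth and first terms off the stalks forces it to be $M_I$. Your alternative — building $\beta_0: M_w \to Z$ from the unit, killing the $M_y$-components via $\Hom(M_y,Z)=0$, producing $\alpha$ from the truncation, and concluding with $\End(Z)=k$ plus the simple-cosocle/splitting argument — is a workable substitute, and your costalk computation is correct (a quibble: connectedness of the link gives a single circle, not smoothness of $X_w$ in complex codimension one — think of a cuspidal curve — but all you use is $(\iota^y)^!\bk_{U_J}\simeq \bk_{C_y}[-2]$, which does follow).

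There is, however, a genuine gap: both your implication $(1)\Rightarrow(3)$ and the very definition of $\alpha$ in $(2)\Rightarrow(3)$ rest on the identification $\cH^{-d_w}(M_I)\simeq j_!\bk_{U_J}$, which you attribute to Theorem \ref{bs}. That theorem only says the lowest cohomology sheaf is the $!$-extension of the constant sheaf off of \emph{certain} boundary divisors; it does not determine that the removed divisors are exactly those indexed by $J=D_w\setminus I$. Pinning down the divisor set is precisely the content the paper supplies in Lemma \ref{homs} and Lemma \ref{poles}: one needs $\dim\Hom(M_y,M_w)=1$, hence $[M_w:L_y]=1$ for $y\in D_w$, and then a restriction-to-the-codimension-$\leqslant 1$ locus/Jordan--H\"older analysis showing that quotienting by the image of $M_y$ makes the stalk of $M_I$ constant along $C_y$ for $y\in I$ and leaves it zero along $C_y$ for $y\in J$. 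Without this, the truncation $\tau^{\leqslant -d_w}M_I$ could a priori be $j'_!\bk_{U_{J'}}$ for some other divisor set $J'$, so $\alpha$ is not defined, $\beta\circ\alpha$ cannot be compared to $\operatorname{id}_Z$, and $(1)\Rightarrow(3)$ is not established. Supplying this stalkwise computation (or simply invoking Lemma \ref{poles}) is the missing ingredient; with it, the rest of your plan goes through.
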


\begin{proof}  By Theorem \ref{ordresobj}, we know that if $M_I$ admits an ordinary resolution then it is a shifted sheaf. In particular it would coincide with its cohomology sheaf $\mathscr{H}^{-d_w} M_I$. By Theorem \ref{bs} this sheaf is the $!$-extension of the constant sheaf from the complement of certain divisors $X_y, y \in D_w$. To determine which divisors we can ignore everything in codimension two. Lemma \ref{poles} then implies $\mathscr{H}^{-d_w} M_I \simeq j_! \bk[d_w]$. We have shown the equivalence of (1) and (3) and the implication (1) implies (2). 

To see that (2) implies (1), note $j_! \bk[d_w]$ admits an ordinary resolution by Theorem \ref{ordresobj}. If $j_! \bk[d_w]$ is moreover perverse, the resolution takes the form in Equation \eqref{naive}. We may always read the terms of the resolution from the stalks of $j_! \bk[d_w]$. Looking at the zeroth and first terms shows $j_! \bk[d_w]$ must be $M_I$. \end{proof}

In the following corollaries, it is understood that condition $(\star)$ holds. 

\begin{cor} For any $w \in \W$, the intersection complex $L_w$ on $X_w$ admits an ordinary resolution if and only if $X_w$ is $k$-smooth. \label{c1}
\end{cor}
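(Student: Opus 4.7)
The strategy is to apply Theorem \ref{topc} after identifying $L_w$ with the particular highest weight quotient $M_{D_w}$, namely the quotient of $M_w$ by the images from every Schubert divisor $M_y$, $y \in D_w$. Since $L_w$ is a surjective image of $M_w$ it is a highest weight sheaf in the sense of the preceding section, so if it admits an ordinary resolution then the classification above Theorem \ref{topc} forces $L_w \simeq M_I$ for a unique $I \subseteq D_w$.

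The key step will be to pin down $I = D_w$. By Theorem \ref{geobgg}, the lowest nonzero cohomology sheaf of $M_I$ is $\cH^{-d_w}(M_I) \simeq j_! \bk_{U_J}$, where $J = D_w \setminus I$ and $U_J = X_w \setminus \bigcup_{y \in J} X_y$. On the other hand, applying Theorem \ref{bs} to $L_w$ itself, where the kernel $K$ in the proof of that theorem vanishes, yields $\cH^{-d_w}(L_w) \simeq \bk_{X_w}$. Comparing the two descriptions forces $U_J = X_w$, i.e., $J = \emptyset$, so $I = D_w$ and $L_w \simeq M_{D_w}$.

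With this identification, the open embedding in Theorem \ref{topc} becomes the identity map on $X_w$, so $j_! \bk[d_w] = \bk_{X_w}[d_w]$. The equivalence (1) $\Leftrightarrow$ (2) of Theorem \ref{topc} then says $L_w$ admits an ordinary resolution if and only if $\bk_{X_w}[d_w]$ is perverse, which is precisely $k$-smoothness of $X_w$. For the converse, when $X_w$ is $k$-smooth the Verdier self-dual perverse sheaf $\bk_{X_w}[d_w]$ restricts to $\bk[d_w]$ on $C_w$ and hence coincides with $L_w$; Theorem \ref{topc}(3) then yields $M_{D_w} \simeq L_w$ together with an ordinary resolution.

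The main obstacle will be the identification $I = D_w$; once the IC-specific computation $\cH^{-d_w}(L_w) \simeq \bk_{X_w}$ is in hand, the remainder is a direct application of Theorem \ref{topc}.
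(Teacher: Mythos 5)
Your proof is correct and takes essentially the same route as the paper: both directions come down to Theorem \ref{topc} together with the identification $L_w \simeq M_{D_w} \simeq \bk_{X_w}[d_w]$ (the paper's forward direction invokes Theorem \ref{ordresobj} directly, but this amounts to the same computation), and the only place you add detail is in pinning down $I = D_w$, a step the paper asserts without comment. One small caveat there: the isomorphism $\cH^{-d_w}L_w \simeq \bk_{X_w}$ is not really an output of Theorem \ref{bs} applied with $K = 0$ --- it is the input $j_*\bk_{C_w} \simeq \bk_{X_w}$ from $(\star)$ combined with the standard identification of the lowest cohomology sheaf of an intersection complex, which is precisely how the paper's proof of Theorem \ref{bs} uses it --- so your comparison is valid, just cited slightly circularly.
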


\begin{proof} If $X_w$ is $k$-smooth, $L_w \simeq \bk_{X_w}[d_w]$, and hence admits an ordinary resolution by Theorem \ref{ordresobj}. If $L_w$ admits an ordinary resolution, by Theorem \ref{topc} we have $L_w \simeq M_{D_w} \simeq \bk_{X_w}[d_w]$. \end{proof}

\begin{cor} Fix $w \in \W$, and $I \subset I' \subset D_w$. If $M_{I'}$ admits a BGG resolution, so does $M_I$.  \label{c2}
\end{cor}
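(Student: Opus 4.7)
The plan is to invoke Theorem \ref{topc} to recast both sides of the desired implication as perversity statements, and to argue the resulting topological assertion by induction. Set $U_I := X_w \setminus \bigcup_{y' \notin I} X_{y'}$ and write $j_I: U_I \hookrightarrow X_w$ for the open embedding. By Theorem \ref{topc}, it is equivalent to prove: if $F_{I'} := (j_{I'})_!\bk_{U_{I'}}[d_w]$ is perverse on $X_w$, then so is $F_I := (j_I)_!\bk_{U_I}[d_w]$. Inducting on $|I' \setminus I|$, I reduce to the case $I' = I \cup \{y\}$.

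In this case, set $W := U_{I'} \cap X_y$, an open dense subset of $X_y$ which is closed in $U_{I'}$ with open complement $U_I$. The standard open-closed distinguished triangle on $U_{I'}$, pushed forward along $U_{I'} \hookrightarrow X_w$ and shifted by $[d_w]$, gives
\[ F_I \longrightarrow F_{I'} \longrightarrow G \xrightarrow{+1}, \qquad G := (W \hookrightarrow X_w)_!\bk_W[d_w]. \]
A short computation in the perverse cohomology long exact sequence associated to this triangle shows that, given $F_{I'}$ perverse, the perversity of $G[-1]$ forces all perverse cohomology of $F_I$ outside degree $0$ to vanish, hence forces $F_I$ perverse.

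The task thus becomes to show that $G[-1]$ is perverse on $X_w$. Factoring $W \hookrightarrow X_w$ as $W \xrightarrow{k} X_y \xrightarrow{\iota_y} X_w$ with $k$ open and $\iota_y$ closed, and invoking the perverse $t$-exactness of the closed pushforward $\iota_{y*}$, this reduces to perversity of $k_! \bk_W[d_y]$ on $X_y$. Applying Theorem \ref{topc} now to the highest weight category of $\W$-constructible perverse sheaves on $X_y$ (which inherits $(\star)$ from $X_w$), this is in turn equivalent to the ordinary BGG-admissibility on $X_y$ of the highest weight sheaf associated to the subset $J := \{z \in D_y : X_z \not\subset \bigcup_{y' \in D_w \setminus I'} X_{y'}\}$ of the divisors $D_y$ of $X_y$, an identification which makes $X_y \setminus \bigcup_{z \notin J} X_z = W$.

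The main obstacle is establishing this BGG-admissibility of the highest weight sheaf for $J$ on $X_y$. I would handle it by a simultaneous induction on $d_w$: with $d_y < d_w$, the inductive form of the present corollary applied on $X_y$ reduces the task to exhibiting an auxiliary $J' \supset J$ whose associated highest weight sheaf on $X_y$ admits a BGG resolution. A natural candidate for $J'$ is forced by the base change identity $\iota_y^* F_{I'} = k_! \bk_W[d_w]$: right perverse $t$-exactness of $\iota_y^*$ on the perverse $F_{I'}$, together with a dual constraint on $\iota_y^! F_{I'}$, supplies partial perversity data on $X_y$, from which the desired $J'$ and auxiliary resolution can be extracted.
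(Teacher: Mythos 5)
Your reduction to $I' = I \cup \{y\}$, the open-closed triangle
\[ F_I \longrightarrow F_{I'} \longrightarrow G \xrightarrow{+1}, \qquad G = (W \hookrightarrow X_w)_! \bk_W[d_w], \]
and the observation that perversity of $G[-1]$ would force perversity of $F_I$ are all correct. But there is a genuine gap at the heart of the argument: you never actually establish that $G[-1]$ is perverse, and that assertion is essentially equivalent to what you are trying to prove. Indeed, given $F_{I'}$ perverse, the triangle shows $G[-1]$ is perverse \emph{if and only if} $F_I$ is perverse; so the reduction is, as stated, circular. Your last two paragraphs acknowledge this and attempt to fill the gap by passing to $X_y$, reducing to a BGG-admissibility statement on $X_y$, and running an auxiliary induction on $d_w$ which requires producing some $J' \supset J$ whose highest weight sheaf on $X_y$ already admits a resolution. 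No concrete $J'$ is exhibited, and the proposed source (one-sided $t$-exactness bounds on $\iota_y^* F_{I'}$ and $\iota_y^! F_{I'}$) does not obviously supply one: right $t$-exactness of $\iota_y^*$ only gives $\pH^{\leqslant 0}$, which you already know from right $t$-exactness of $k_!$, and $\iota_y^!$ does not enjoy a base-change formula against $j_{I'!}$. The obvious candidate $J' = D_y$ would require $\bk_{X_y}[d_y]$ to be perverse, which fails for singular $X_y$. There is a salvage: if the open complement $X_w \setminus X_y \hookrightarrow X_w$ were an \emph{affine} open immersion, then $j_!$ would be perverse $t$-exact, the recollement long exact sequence would give $\iota_y^* F_{I'} \in \pH^{[-1,0]}(X_y)$, hence $k_!\bk_W[d_y] \in \pH^{[0,1]} \cap \pH^{\leqslant 0} = \pH^{\{0\}}$, and your argument would close without any further induction. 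But affineness of this complement is not among the paper's hypotheses (only the individual stratum inclusions $\iota^w$ are assumed affine), and you do not invoke it.

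The paper's proof avoids all of this with a different decomposition. Rather than splitting $U_{I'}$ into the open $U_I$ and its closed complement $W$, it covers $U_{I'}$ by the two open subsets $U_I$ and $U_{I' \setminus I}$, whose intersection is $U_\emptyset = C_w$, and writes the Mayer--Vietoris distinguished triangle
\[ j_!\bk_{U_\emptyset}[d_w] \longrightarrow j_!\bk_{U_I}[d_w] \oplus j_!\bk_{U_{I'\setminus I}}[d_w] \longrightarrow j_!\bk_{U_{I'}}[d_w] \xrightarrow{+1}. \]
The leftmost term is $M_w$, which is perverse for free, and the rightmost is perverse by hypothesis; the long exact sequence in perverse cohomology then yields perversity of $F_I$ (as a summand of the middle term) in two lines, with no auxiliary induction, no affineness assumptions, and no appeal to the geometry of $X_y$. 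The moral is that replacing a closed complement by a second open set trades the hard-to-control cone $G$ for the object $M_w$, whose perversity is free; that trade is what makes the argument work in the stated generality.
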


\begin{proof} Briefly, this follows from the Mayer-Vietoris sequence. In more detail, for any $I'' \subset D_w$, let us name the open subvariety appearing in Theorem \ref{topc} as $$\mathscr{U}_{I''} := X_w \setminus \cup_{y \notin I''} X_y.$$ The Mayer-Vietoris exact sequence of sheaves is in particular a distinguished triangle. Shifting by $d_w$, we obtain \begin{equation}\label{mv} j_! \bk_{\mathscr{U}_\emptyset}[d_w] \rightarrow j_! \bk_{\mathscr{U}_I}[d_w] \oplus j_! \bk_{\mathscr{U}_{I' \setminus I}}[d_w] \rightarrow j_! \bk_{\mathscr{U}_{I'}}[d_w] \xrightarrow{+1}. \end{equation} Note the leftmost term of \eqref{mv} is $M_w$. Considering the long exact sequence on perverse cohomology corresponding to \eqref{mv} and applying Theorem \ref{topc} yields the claim. \end{proof}

\section{A worked example} \label{wex}Let $\g$ be a complex semisimple Lie algebra, and write $\OO_0$ for a regular block of its Category $\OO$. In this subsection we will obtain a classification of the highest weight modules of $\OO_0$ admitting ordinary resolutions for the first nontrivial choice of $\g$. 

Let us explain what we mean by nontrivial. Write $X$ for the flag variety associated to $\g$, and $W$ for the associated Weyl group. Write $X_w$, for $w \in W$, for the Schubert subvarieties of $X$. By Corollaries \ref{c1}, \ref{c2}, if a Schubert variety $X_w$ is rationally smooth, then for every $I \subset D_w$ the module $M_I$ admits an ordinary resolution. This observation accounts for all cases when $\g$ is of rank at most two. Alternatively, in rank at most two the $M_I$ are all parabolic Verma modules. 

We will therefore consider the case of type $A_3$. Write $s,t,u$ for the simple reflections generating $W$, where $s$ and $u$ commute. In this case there are two Schubert varieties which are not rationally smooth. These are of dimension four and five, and the corresponding elements of $W$ may be characterized as the unique elements of their length fixed by the involution switching $s$ and $u$. Explicitly, they may be written as the reduced expressions $tsut$ and $stuts$. 

Let us first consider the case of $X_{tsut}$. Consider the spectral sequence \eqref{sqsq} converging to $L_{tsut}$. For transparency of indexing, we will run the spectral sequence in the closed subcategory $\A_{\leqslant tsut}$. In this case, on the $E_1$ page column $-p$ stores the stalks on strata of codimension $p$ in $X_{tsut}$, and row $q$ stores the cohomology sheaf $\mathscr{H}^{-4 + q}$. As the only nontrivial Kazhdan-Lusztig polynomials $P_{y, tsut}$ are $P_{e, tsut} = P_{t, tsut} = 1 + q$, the $E_1$ page looks as follows: 

\begin{sseq}[grid=chess,labelstep=1,
entrysize=2.4cm]{-4...0}{0...2}
\ssmoveto{-4}{0}
\ssdrop{M_e}  
\ssmove 1 0 
\ssdrop{\bigoplus_{\substack{y \leqslant w: \\ \ell_w = 1}} M_w} \ssstroke[arrowto]
\ssmove 1 0 
\ssdrop{\bigoplus_{\substack{y \leqslant w: \\ \ell_w = 2}} M_w} \ssstroke[arrowto]
\ssmove 1 0 
\ssdrop{\bigoplus_{\substack{y \leqslant w: \\ \ell_w = 3}} M_w} \ssstroke[arrowto]
\ssmove 1 0 
\ssdrop{M_{tsut}} \ssstroke[arrowto]
\ssmoveto{-4}{2} 
\ssdrop{M_e}
\ssmove 1 0
\ssdrop{M_t} \ssstroke[arrowto]
\end{sseq}

As $\mathscr{H}^{-2} L_{tsut} \simeq \C_{X_t}$, we recognize the complex $M_e \rightarrow M_t$ as the ordinary resolution of $L_t$. It follows the only remaining nontrivial differential in the spectral sequence occurs on $E_3$:

\begin{sseq}[grid=chess,labelstep=1,
entrysize=2.4cm]{-4...0}{0...2}
\ssmoveto{-3}{2}
\ssdrop{L_t}
\ssmoveto{0}{0}
\ssdrop{M_{D_{tsut}}} \ssstroke[arrowto]
\end{sseq}

 We deduce that the bottom row of the $E_1$ page was an ordinary resolution of $M_{D_{tsut}}$. By Corollary \ref{c2}, we conclude every $M_I$ of highest weight $tsut$ admits an ordinary resolution.
 
Further, we may deduce that the kernel of $M_{tsut} \rightarrow L_{tsut}$ is not generated by highest weight vectors. Namely, from the classification of homomorphisms between Verma modules, any nontrivial homomorphism from a Verma module to $M_{tsut}$ would factor through an inclusion $M_y \rightarrow M_{tsut}$, for $y$ a divisor of $tsut$. This affords an example of a simple module which cannot be quasi-isomorphic to a complex of Verma modules, as we spell out in the following:
 
 \begin{lem} Suppose $\A^\heartsuit$ is a highest weight category, and $L_w$ a simple object such that the kernel of $M_w \rightarrow L_w$ is not generated by highest weight vectors, i.e. not a quotient of a sum of standard objects. Then $L_w$ is not quasi-isomorphic to a complex whose terms are sums of Verma modules. \end{lem}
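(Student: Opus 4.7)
The plan is to prove the contrapositive: assume $L_w$ is quasi-isomorphic in $\A$ to a bounded complex $C^\bullet$ whose terms are sums of standard modules, and deduce that $K := \ker(M_w \twoheadrightarrow L_w)$ is a quotient of a sum of standard modules.

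First, I would normalize $C^\bullet$ to be concentrated in non-positive cohomological degrees. If the top nonzero degree is some $d > 0$, then $H^d(C^\bullet) = 0$ forces the differential $\phi : C^{d-1} \to C^d$ to be surjective. The highest weight structure rigidly controls such a surjection: each $M_y$ is a local object (since $\cosoc M_y = L_y$ is simple), any nonzero map $M_z \to M_y$ with $z \neq y$ factors through the unique maximal subobject of $M_y$, and $\End(M_y) = k$. Together with Krull--Schmidt for sums of standards, these facts let one split off an acyclic two-term summand $A \xrightarrow{\sim} C^d$ in degrees $[d-1, d]$, where $A \cong C^d$ is a direct summand of $C^{d-1}$. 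The cochain relation $\phi \circ d^{d-2} = 0$ forces the incoming differential to land in the complementary summand $\ker \phi$, so this really is a direct summand at the complex level. Quotienting by it yields a complex of sums of standards, still quasi-isomorphic to $L_w$, but with top degree $d-1$; iterate.

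With $C^\bullet$ concentrated in degrees $\leq 0$, the identification $L_w = H^0(C^\bullet) = C^0/\operatorname{im}(d^{-1})$ supplies a surjection $\pi : C^0 \twoheadrightarrow L_w$ whose kernel equals $\operatorname{im}(d^{-1})$ and is therefore a quotient of $C^{-1}$. Writing $C^0 = \bigoplus_y M_y \otimes V_y$, the key observation $\Hom(M_y, L_w) = \delta_{y,w}\, k$, valid because a nonzero map $M_y \to L_w$ would identify the simple cosocle $L_y$ with the simple target $L_w$ and force $y = w$, implies that $\pi$ is determined by a single nonzero linear functional $\varphi : V_w \to k$. Choosing a decomposition $V_w = V'_w \oplus k$ with $V'_w = \ker \varphi$, one obtains
\[
\ker \pi \;\cong\; \Bigl(\bigoplus_{y \neq w} M_y \otimes V_y\Bigr) \,\oplus\, (M_w \otimes V'_w) \,\oplus\, K.
\]

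Since $\ker\pi$ is a quotient of the sum of standards $C^{-1}$, postcomposing a surjection $C^{-1} \twoheadrightarrow \ker\pi$ with projection onto the $K$-summand exhibits $K$ itself as a quotient of a sum of standard modules, completing the proof of the contrapositive. The main obstacle is the normalization step, where one must carefully invoke both Krull--Schmidt for sums of standards (supplied by $\End(M_y) = k$) and the locality of each $M_y$ to verify that the surjection at the top of the complex is forced into the rigid form permitting an acyclic direct summand to be split off. Once normalized, the rest is straightforward $\Hom(M_y, L_w) = \delta_{y,w}\,k$ bookkeeping on the top term.
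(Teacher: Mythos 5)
Your proof is correct, and its core step is the same as the paper's: after normalization, one identifies $L_w$ as a quotient of $C^0$ whose kernel is a quotient of $C^{-1}$, and then peels off $K = \ker(M_w \to L_w)$ as a direct summand of that kernel, contradicting the hypothesis. The difference lies entirely in the normalization. The paper never reduces to non-positive degrees; instead it prunes the complex using $\Ext$-vanishing against costandards: it quotients out the $M_y$-isotypic subcomplexes for $y$ maximal with $y \not\leq w$ (acyclic because $\RHom(L_w, A_y) = 0$), then arranges that $M_w$ occurs only in degree $0$ (using $\RHom(L_w, A_w) = k[0]$), and finally identifies $K$ with the image of the composite $\mathscr{C}^{-1} \to \mathscr{C}^0 \twoheadrightarrow M_w$, using that $K$ is the \emph{unique} maximal submodule of $M_w$. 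You instead truncate the positive degrees directly by repeatedly splitting off acyclic two-term summands from the top of the complex. This is valid, but the claim that a surjection between sums of standard objects splits is the one genuinely delicate point in your write-up: it does follow by descending induction on the poset from $\End(M_y)=k$, the vanishing $\Hom(M_y,M_z)=0$ unless $y\leqslant z$, and Krull--Schmidt, but you should spell out that induction rather than gesture at it, since it is doing real work (it is what guarantees $\ker\phi$ is again a sum of standards so the iteration can continue). Your route has the small advantage of not needing the $\Ext$-vanishing $\RHom(L_w,A_y)=0$ for $y\not\leqslant w$; the paper's has the advantage of avoiding the splitting lemma and working with the kernel-of-$d^0$ rather than forcing the complex into non-positive degrees.
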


\begin{proof} Suppose $\mathscr{C}$ was a complex whose terms were sums of Verma modules, and whose cohomology consisted of $L_w$ in degree zero. Let us say that an element $y \in \W$ occurs in $\mathscr{C}$ if $M_y$ is a summand in a term of $\mathscr{C}$. Pick a $y$ maximal in $\W$ such that $y$ occurs in $\mathscr{C}$, and note that the factors of $M_y$ in each degree form a subcomplex $\mathscr{C}^y$ of $\mathscr{C}$. If $y$ is not less than or equal to $w$, then it follows that $\mathscr{C}^y$ is acyclic, and we may replace $\mathscr{C}$ by $\mathscr{C}/\mathscr{C}^y$. Iterating this, we may assume that $w$ is the unique maximum of the subset of $\W$ occuring in $\mathscr{C}$. By the same argument, we may further assume that $M_w$ appears in exactly one degree, which is necessarily zero. It follows that $L_w$ is isomorphic to the quotient of $M_w$ by $S$, its intersection with the image of $\mathscr{C}^{-1}$. Let us write $\mathscr{C}^0$ as $M_w \oplus M$, where $M$ is a sum of Verma modules $M_y,$ for $y < w$. As $S$ is a submodule of the image of the composition $$\mathscr{C}^{-1} \rightarrow M_w \oplus M \rightarrow  M_w,$$which again must be a proper submodule $S'$ of $M_w$, it follows that $S = S'$. In particular, $S$ is generated by highest weight vectors, which is a contradiction. \end{proof}

The analysis of $X_{stuts}$ is similar. Here one uses that $\mathscr{H}^{-5} L_{stuts} \simeq \C_{X_{stuts}}, \mathscr{H}^{-3} L_{stuts} \simeq \C_{X_{su}}$. As the latter sheaf is, up to a shift, $L_{su}$, the same reasoning about the $E_1$ page and subsequent differentials applies. In particular, every $M_I$ of highest weight $stuts$ admits an ordinary resolution. 

This completes the classification. Adding things up, we have shown there are up to isomorphism 155 highest weight modules in type $A_3$ admitting ordinary resolutions. Previous examples of such modules were to the author's knowledge limited to rationally smooth simple modules and parabolic Verma modules. The latter correspond to considering divisors lying in the left descent set of $w$ rather than all divisors. In particular, in total these account for only 75 + 22 - 8 = 89 of the above modules. 

 \bibliographystyle{plain}    
\bibliography{output}

\end{document}